\title{IMRO: A proximal quasi-Newton method for solving $\ell_1$-regularized least square problem
\thanks{Supported
in part by a grant from the U.~S.~Air Force Office of Scientific Research and in part
by a Discovery Grant from the Natural Sciences and Engineering Research Council
(NSERC) of Canada.}}
\author{Sahar Karimi\thanks{Department of Combinatorics \& Optimization,
University of Waterloo, 200 University Ave.~W., Waterloo, ON, N2L 3G1,
Canada, {\tt s2karimi@uwaterloo.ca}.} \and 
Stephen Vavasis\thanks{Department of Combinatorics \& Optimization,
University of Waterloo, 200 University Ave.~W., Waterloo, ON, N2L 3G1,
Canada, {\tt vavasis@uwaterloo.ca.}}}
\begin{document}

\newtheorem{thm}{Theorem}
\newtheorem{lem}{Lemma}
\newtheorem{cor}{Corollary}
\newtheorem{res}{Result}
\newtheorem{clm}{Claim}
\newtheorem{prop}{Property}
\newtheorem{prps}{Proposition}
\newtheorem{dfn}{Definition}
\newtheorem{alg}{Algorithm}
\newcommand \beq {\begin{equation}}
\newcommand \eeq {\end{equation}}
\newcommand \beqn {\begin{equation*}}
\newcommand \eeqn {\end{equation*}}
\newcommand \bseq {\begin{subequations}}
\newcommand \eseq {\end{subequations}}
\newcommand \ba{\begin{array}}
\newcommand \ea{\end{array}}
\newcommand \beqa {\begin{eqnarray}}
\newcommand \eeqa {\end{eqnarray}}
\newcommand \beqan {\begin{eqnarray*}}
\newcommand \eeqan {\end{eqnarray*}}
\newcommand \bc{\begin{cases}}
\newcommand \ec{\end{cases}}
\newcommand \bitm{\begin{itemize}}
\newcommand \eitm{\end{itemize}}
\renewcommand \r {\right}
\renewcommand \l {\left}
\newcommand \eps {\epsilon}
\newcommand \onevec{\mathbf 1}
\newcommand \argmin {\arg \min}
\newcommand \sgn {\text{sgn}}
\newcommand \prox {\text{Prox}}
\newcommand \proj {\text{Proj}}
\newcommand \spn {\text{Span}}
\newcommand \shrk {\mathcal S}
\newcommand \xs {x^*}
\newcommand \lra {\Leftrightarrow}

\maketitle

\begin{abstract}
We present a proximal quasi-Newton method in which the approximation of the Hessian has the special format of ``identity minus rank one'' (IMRO) in each iteration. The proposed structure enables us to effectively recover the proximal point. The algorithm is applied to $\ell_1$-regularized least square problem arising in many applications including sparse recovery in compressive sensing, machine learning and statistics. Our numerical experiment suggests that the proposed technique competes favourably with other state-of-the-art solvers for this class of problems.  We also provide a complexity analysis for variants of IMRO, showing that it matches known best bounds.
\end{abstract}

\begin{keywords} 
Proximal Methods, Quasi-Newton Methods, Sparse Recovery, Basis Pursuit Denoising Problem, $\ell_1$-regularized Least Square Problem, Convex Optimization, Minimization of Composite Functions
\end{keywords}

\section{Introduction}
\label{intro:SR}
Compressive sensing (CS) \cite{RBcs, Ccs,  CWintocs} refers to the idea of encoding a large sparse signal through a relatively small number of linear measurements. This approach is essentially applying a linear operator $A\in \mathbb R^{m \times n}$ to a signal $x\in \mathbb R^n$ and storing $\hat b= Ax$ instead. Naturally we want $\hat b\in \mathbb R^m$ to be of a smaller dimension than $x$; hence in practice $m \ll n$. The main question is how to decode $\hat b$ to recover signal $x$, i.e., finding the solution to the underdetermined system of linear equations 
\beq\label{syslineq} A x = \hat b. \eeq

Sparse recovery particularly aims at finding the sparsest solution to \eqref{syslineq}. The sparsest solution might be obtained by solving 
\beq \label{x0obj} 
\ba{ll}
\min & \|x\|_0\\
\mbox{s.t.} & Ax=\hat b,
\ea
\eeq
where $\|x\|_0$ corresponds to the number of nonzero entries of $x$. Problem \eqref{x0obj} is, however, NP-hard and difficult to solve in practice. Therefore the  following linear programming relaxation was suggested for recovering the sparse solution: 
\beq\label{x1obj}
\ba{ll}
\min & \|x\|_1\\
\mbox{s.t.} & Ax= \hat b.
\ea
\eeq
The theory of compressive sensing has been well established. Cand\`es, Tao, Donoho, and Romberg are among the pioneers of compressive sensing theory; see \cite{CRT,CT,donoho} and references therein. In fact, they have shown that under some conditions \eqref{x1obj} can recover the solution to \eqref{x0obj}. 

In the presence of the noise in computing and storing $\hat b$, the measurement is often $b=\hat b +\hat \eps$; hence it is customary to replace 
$Ax=\hat b$ with $\|Ax- b\| \le \eps$ in \eqref{x1obj}, where $\eps$ is an estimated upper bound on the noise. The resulting problem is 
\beq\label{BP}
\ba{ll}
\min & \|x\|_1\\
\mbox{s.t.} & \|Ax - b\|\le \eps.
\ea
\quad(BP_\eps)
\eeq
Problem \eqref{x1obj} is usually referred to as ``Basis Pursuit" $(BP)$ problem, while $BP_\eps$ refers to its least square constrained variant, \eqref{BP}.

Other common problems in sparse recovery are
\beq\label{BPDN}
\ba{ll}
\min &\frac{1}{2}\|Ax-b\|^2+  \lambda \| x\|_1,
\ea
\quad \  (BPDN) 
\eeq
and
\beq \label{LASSO}
\ba{ll}
\min& \|Ax-b\|\\
\mbox{s.t.} & \|x\|_1\le \tau.
\ea
\hspace{1in} (LASSO) 
\eeq
In the literature of compressive sensing, \eqref{BPDN} is often called ``Basis Pursuit Denoising Problem" (BPDN) or $\ell_1$-regularized least square problem, and \eqref{LASSO} goes by the name of ``LASSO" (Least Absolute Shrinkage and Selection Operator). 
It is possible to show that formulations \eqref{BP}, \eqref{BPDN} and \eqref{LASSO} attain the same optimizer provided that certain relationship holds between $\epsilon$, $\lambda$ and $\tau$. However, there is no simple manner to compute this relationship without already knowing the optimal solution.
The algorithms proposed in this work are tailored for solving the BPDN problem, i.e. formulation \eqref{BPDN}. 

In what follows we first review some of the notation used in this paper followed by a brief review on some of the related  techniques to our method. IMRO is presented in Section \ref{sec:imro}. The convergence of IMRO is established in Section \ref{imro:cnvg}. In Section \ref{sec:fimro} we present the accelerated variant of IMRO. Our computational experiment is presented in Section \ref{ch:imroNR}. Finally we conclude our discussion in Section \ref{sec:conclusion}.

\section{Notation}
We work with real Euclidean vector space $\mathbb R^n$ equipped with the inner product $\langle x,y\rangle= \sum_{i=1}^n x_iy_i$. A linear operator (matrix) is defined as $A: \mathbb R^n \rightarrow \mathbb R^m$. Its adjoint is denoted by $A^t$, and we have $\langle Ax,y\rangle= \langle x,A^ty\rangle$.
Matrices, vectors and constants are denoted by upper-case, lower-case and Greek alphabet, respectively. The identity matrix is denoted by $\mathbf I$. We reserve the notation of $\|.\|$ for Euclidean norm; all other norms are denoted with the proper index. 

The notation $\mathbf C_L$ stands for the class of continuously differentiable convex functions with Lipschitz continuous gradient, where $L$ is the Lipschitz constant. We refer to functions of the form 
$$F(x):=f(x) + p(x)$$
as composite functions, where $f(x)\in \mathbf C_L$ and $p(x)$ is a convex, possibly nonsmooth, function. 
Note that BPDN problem is an example of minimization of a composite function. 

The proximal operator refers to 
$$\prox_p(y) = \arg\min_x \l\{ \frac{1}{2} \|x-y\|^2+ p(x) \r\}.$$
The (soft) shrinkage (also called thresholding) operator is denoted by $\mathcal S$, and defined as 
$$\shrk_\lambda (y) = \argmin_x\l\{ \frac{1}{2} \|x-y\|^2+ \lambda \|x\|_1\r\}.$$ 
Thus, $\shrk_\lambda\equiv\prox_p$ for the choice $p(x)=\lambda\|x\|_1$.
One may easily check that 
\beq
\shrk_\lambda(y)_i=
\bc
y_i - \lambda & \mbox{if $y_i\ge \lambda$},\\
0 & \mbox{if $|y_i|\le \lambda$}, \\
y_i+\lambda & \mbox{if $y_i\le -\lambda$},
\ec
\eeq
which is equivalent to 
\beq
 \shrk_\lambda(y) = \sgn(y)\odot \max\l\{ |y|-\lambda, 0\r\},
 \eeq
where $\odot$ denotes the entry-wise or Hadamard product.

The scaled norm associated with a positive definite matrix $H\succ 0$ is defined as 
\beq 
\|x\|_H= \sqrt{x^tHx}. 
\eeq
It is not too difficult to see that the scaled norm satisfies all the axioms of a norm. Moreover, let the scaled proximal mapping (or operator) associated with positive definite matrix $H$ be defined as 
\beq
\prox_p^H(y)= \argmin_x \l\{ \frac{1}{2}\| x-y\|_H^2 + p(x) \r\}.
\eeq

\section{Related Work}
Algorithms that rely solely on the function value and the gradient of each iterate are referred to as first-order methods. Due to the large size of the problems arising in compressive sensing, first-order methods are more desirable in sparse recovery. 
 There are numerous gradient-based first-order algorithms  
proposed for sparse recovery, see for example \cite{NESTAp,GPSRp,TwISTp1,FPCp2,VFried:PP,OG:bregman1,FN:EMalg,SpaRSAp,FMGenProx}. 

In \cite{VFried:PP,VFried:SPGL1} an efficient root finding procedure has been employed for finding the solution of $BP_\eps$ through solving a sequence of $LASSO$ problems. In other words, a sequence of $LASSO$ problems for different values of $\tau$ is solved using a spectral projected gradient method \cite{SpecProjGrad1}; and as $\tau \rightarrow \tau^\ast$, the solution of the $LASSO$ problem coincides with the solution of $BP_\eps$. In \cite{OG:bregman1}, the solution of $BP$ problem is recovered through solving a sequence of $BPDN$ problems with an updated observation vector $b$. GPSR \cite{GPSRp} is a gradient projection technique for solving the bound constrained QP reformulation of $BPDN$. 

Many other state-of-the-art algorithms in compressive sensing are inspired by iterative thresholding/shrinkage idea \cite{CPISTA1,CPISTA2,CWISTA3}.   
ISTA (iterative shrinkage thresholding algorithm) is an extension of the steepest descent idea to composite functions using the thresholding operator.
Recall that in the steepest descent method, the general format of the generated sequence is
\beq x^{k+1} = x^k - \alpha^k \nabla f (x^k),\eeq
which might be considered as the solution to the following quadratic 
approximation of $f$:
\beq\label{sdmodel} 
x^{k+1} = \argmin_x \l\{ f(x^k)+ \langle x-x^k, \nabla f(x^k) \rangle+ \frac{1}{2\alpha^k}\|x-x^k\|^2 \r\}. 
 \eeq

Consider the problem of minimizing a composite function, i.e.,
\beq \label{mincompfunintro}
\min_x \ \ F(x)= f(x) + p(x).
\eeq
Using the same approximation model as in \eqref{sdmodel} for $f(x)$, we can get the following iterative scheme:
\beq\label{mincompfuna1}
 x^{k+1} = \argmin_x \l\{ f(x^k)+ \langle x-x^k, \nabla f(x^k) \rangle+ \frac{1}{2\alpha^k}\|x-x^k\|^2 +p(x)\r\}. 
\eeq

Shuffling the linear and quadratic terms and ignoring the constants in \eqref{mincompfuna1}, it can equivalently be written as
\beq\label{mincompfuna2}
 x^{k+1} = \argmin_x \l\{ \frac{1}{2\alpha^k}\|x-\l(x^k- \alpha^k \nabla f(x^k)\r)\|^2 +p(x)\r\}. 
\eeq
Using the notion of $\prox$ operator we conclude that
\beq\label{prox1}
x^{k+1}=\prox_{\alpha^k p}\l(x^k- \alpha^k \nabla f(x^k)\r).
\eeq
The iterative scheme of \eqref{prox1} is the ``generalized gradient method" or ``proximal gradient method". Note that it actually coincides with steepest descent method in the absence of $p(x)$. 
 It is also sometimes called ``Forward-backward Splitting Method" \cite{CWISTA3, FMGenProx,FBsplitSysIden}. 
Finding the proximal point may not be a trivial task in general, but for solving $BPDN$ it can be computed efficiently because the $\ell_1$-norm is separable.  

The algorithm that goes by the name of ISTA in the literature of sparse recovery refers to a proximal gradient method for composite functions in which $p(x)=\lambda \|x\|_1$. The general form of ISTA is 
\beq\label{istaGF}
x^{k+1}=\shrk_{\lambda \alpha^k} \l(x^k- \alpha^k \nabla f(x^k)\r).
\eeq

Each iteration of ISTA can be computed efficiently; it, however, could suffer from slow rate of convergence. In general, it has sublinear (i.e. $\mathcal O(k^{-1})$) rate of convergence \cite{BT:fista, BL:linCnvgISTA}. In  \cite{BL:linCnvgISTA}, it has been shown that 
generalized gradient algorithms achieve linear convergence.

FISTA (fast ISTA) \cite{BT:fista} is the accelerated variant of ISTA that was built upon the Nesterov's idea \cite{nesterov1983,nesterov2005}. Each iteration of FISTA has the following format:
\beqa
x^{k+1}& = &\shrk_{\lambda \alpha^k } \l(y^k-\alpha^k \nabla f(y^k)\r), \label{fistaGFe1}\\ 
t^{k+1}& = &  \frac{1+ \sqrt{1+4{t^k}^2}}{2},\label{fistaGFe2}\\
y^{k+1}& = &  x^k + \l(\frac{t^k-1}{t^{k+1}}\r)\l( x^{k+1} - x^k\r).\label{fistaGFe3}
\eeqa
FISTA is not restricted to BPDN problem, and it was originally proposed for minimizing a general composite function. Replacing \eqref{fistaGFe1} with 
\beq\label{fistaGFge1}
 x^{k+1}=\prox_{ \alpha^k p} \l(y^k-\alpha^k \nabla f(y^k)\r) 
\eeq
would generalize FISTA to an algorithm well-suited for minimizing any composite function. 
The Nesterov's accelerated proximal gradient algorithm has been adopted for solving $BP_\eps$ in \cite{NESTAp}, and  for solving the $LASSO$ problem in \cite{ParNes}. 

The alternating direction method (ADM) is also a technique that can be applied to $BPDN$, see \cite{Z:ADM, Z:fastADM} and references therein. It is suited for minimizing the summation of (separable) convex functions, say $f(x)+p(y)$, over a linear set of constraints. The augmented Lagrangian technique then solves for $x$ and $y$ alternately while fixing the other variable. The alternating linearization method (ALM) \cite{KG:ALM} also applies to minimizing composite functions. In \eqref{mincompfuna1}, we linearize $f$ at every iteration to build the quadratic approximation model; in ALM a similar model based on $p$ is also minimized at every iteration. Nesterov's accelerated technique has also been adopted, and the resulting algorithm is called FALM, for fast ALM. 

In order to incorporate more information about the function without trading off the efficiency of the algorithms, Newton/quasi-Newton proximal methods \cite{BeckerPNM,ProxNMcompfun} have attracted researchers quite recently. Most of the previous extensions on quasi-Newton methods are suited either for nonsmooth problems \cite{LO:NSO,QNinML}, or for constrained problems with simple enough constraints \cite{QN4BC, QN4BC2, Fried:QN4Cnst}. The technique mostly related to ours is the quasi-Newton method of Becker and Fadili \cite{BeckerPNM} described in section \ref{sec:imro}.

The proximal quasi-Newton method is obtained by replacing the diagonal matrix $\frac{1}{\alpha^k}\mathbf I$ in the quadratic term of \eqref{mincompfuna1} with a suitable positive definite matrix. In other words, define $m_{H^k}(x,x^k)$ as 
\beq\label{quadmodelH}
m_{H^k}(x,x^k)= f(x^k)+ \langle x-x^k, \nabla f(x^k) \rangle+ \frac{1}{2}\l(x-x^k\r)^tH^k\l(x-x^k\r),
\eeq
where $H^k\succ0$, and solve 
\beq\label{minquadmodelH}
\min_x \ m_{H^k}(x,x^k) + p(x),
\eeq
at each iteration. 
 Ignoring the constant terms and using the definition of scaled norm, we can rewrite \eqref{quadmodelH} as 
\beq\label{quadmodelHn}
m_{H^k}(x,x^k)=\frac{1}{2}\|x-\l(x^k-({H^k})^{-1}\nabla f(x^k)\r)\|^2_{H^k}, 
\eeq
and define the proximal quasi-Newton algorithm as
\beq\label{PQNMe1}
x^{k+1}=\prox^{H^k}_p\l(x^k-({H^k})^{-1}\nabla f(x^k)\r).
\eeq
\section{IMRO Algorithm} \label{sec:imro}
We present a practical variant of proximal quasi-Newton methods for solving BPDN problem in this section. Recall that the BPDN problem is 
\beq\label{imro:BPDNdef}
\min_x \ \ \tilde F(x):= \ \frac{1}{2} \| Ax-b\|^2 + \lambda \|x\|_1.
\eeq
Let us denote the quadratic part of $\tilde F(x)$ with $\tilde f(x)$, and the $\ell_1$-regularization term with $\tilde p(x)$. We note that $\tilde f\in \mathbf{C}_L$ with $L=\Vert A\Vert^2$.
Applying the proximal quasi-Newton scheme of  \eqref{minquadmodelH} to BPDN we get that 
\beq\label{findxQmodel}
\ba{ll}
x^{k+1}= \displaystyle{\arg\min_x}\ \ \frac{1}{2}\|x-\l(x^k-({H^k})^{-1}\nabla \tilde f(x^k)\r)\|^2_{H^k} + \lambda\|x\|_1.
\ea
\eeq
In our proposed proximal quasi-Newton scheme $H^k$ has the following format:
\beq
H^k= \sigma^k \mathbf I - u^k({u^k})^t. 
\eeq
Note that $H^k\succ 0$ provided $\sigma^k>\| u^k\|^2$.
In fact the term ``IMRO" stands for ``identity minus rank one," which is the proposed format for matrix $H^k$. We will see shortly that one of the advantages of IMRO is the efficiency of computing $x^{k+1}$. In \cite{BeckerPNM}, Becker and  Fadili suggest a proximal quasi-Newton method in which $H^k$ is a positive definite diagonal plus a rank one matrix. The methodology that we develop for selecting $\sigma$ and $u$ presented in Sections \ref{sec:imro1d} and \ref{sec:imro2d} does not seem to extend to the case of identity (or positive definite diagonal) plus rank one according to our analysis, but this question may need future investigation. Computing $x^{k+1}$ in IMRO, however, is very similar to what is proposed in \cite{BeckerPNM} and our discussion in Section \ref{imro:compnextitr} can be applied to the identity plus rank one Hessian approximation. 

In the remainder of this section, we first describe how we may find $x^{k+1}$ utilizing the special structure of $H^k$ in IMRO. Our discussion is then followed by two different variants of IMRO and their properties.

\subsection{Computing the Proximal Point in IMRO}\label{imro:compnextitr}
In this section we explain how we can attain the solution of \eqref{findxQmodel}, denoted by $x^+$ here, in linearithmic time, i.e., $O(n\log n)$.
Note that optimality conditions for \eqref{findxQmodel} imply that 
\beq \label{imro:findx:optcondH}
H^k\l(x^+-(x^k-({H^k})^{-1}\nabla \tilde f^k)\r) + \lambda \xi^+ =0,
\eeq 
where $\xi^+ \in \partial(\|x^{+}\|_1)$ is a subgradient of $\tilde p(x)$ at $x^+$. Let us denote $x^k-({H^k})^{-1}\nabla \tilde f^k$ by $\bar x^k$. Since in IMRO $H^k=\sigma^k \mathbf I - u^k(u^k)^t$, $({H^k})^{-1}$ might be computed in closed form:
\beq
(\sigma^k \mathbf I - u^k(u^k)^t) ^{-1}= \frac{1}{\sigma^k} \mathbf I - \frac{1}{\sigma^k (\|u^k\|^2-\sigma^k)}u^k(u^k)^t, 
\eeq
so we are able to calculate $\bar x^k$ easily. Condition \eqref{imro:findx:optcondH} may now be restated as 
\beq \label{imro:findx:optcond}
(\sigma^k \mathbf I - u^k(u^k)^t) (x^+-\bar x^k)+ \lambda \xi^+ =0.
\eeq
Recall that $\xi^+_i=1$ if $x_i^+>0$, $\xi_i^+=-1$ if $x_i^+<0$, and $\xi_i^+\in[-1,1]$ if
$x_i^+=0$.  In the latter case, we have the freedom to select $\xi_i^+$ as any point in 
$[-1,1]$ in order to make (\ref{imro:findx:optcond}) hold.

Let $\mu$ (to be found) be a scalar equal to $\frac{(u^k)^t(x^+-\bar x^k)}{\sigma^k}$. Then equation \eqref{imro:findx:optcond} reduces to 
\beq\label{imro:findx:optcondmu}
x^+-\bar x^k - u^k \mu + \frac{\lambda}{\sigma^k} \xi^+ =0;
\eeq
thus
\beq \label{imro:findx:xform}
x^{+}_i= \l\{
  \ba{ll}
	\bar x^k_i+ u^k_i\mu -\lambda/\sigma^k & \text{if\ \ } x^k_i+ u^k_i\mu -\lambda/\sigma^k>0,\\
		\bar x^k_i+ u^k_i\mu +\lambda/\sigma^k & \text{if\ \ } x^k_i+ u^k_i\mu +\lambda/\sigma^k<0,\\
    0 & \text{otherwise.}
  \ea
\r.
\eeq
Using \eqref{imro:findx:xform} and sign of $u^k_i$, we may find the proper interval for $\mu$ so that the mentioned equations for $x_i^+$ holds true; in other words:
\beqa
\label{imro:findx:breakpnt1}
x_i^+>0 & 
\Rightarrow& \bar x^k_i +u^k_i\mu -\frac{\lambda}{\sigma^k} >0  \Rightarrow \ba{l} \mu> \frac{\lambda/\sigma^k-\bar x^k_i}{u^k_i} \text{ if } u^k_i>0,\\ \mu< \frac{\lambda/\sigma^k-\bar x^k_i}{u^k_i} \text{ if } u^k_i<0,\ea \\ 
& \notag \\
x_i^+<0 &
\Rightarrow & \bar x^k_i +u^k_i\mu +\frac{\lambda}{\sigma^k} <0   \Rightarrow \ba{l} \mu< \frac{-\lambda/\sigma^k -\bar x_i^k}{u^k_i} \text{ if } u^k_i>0,\\  \mu> \frac{-\lambda/\sigma^k-\bar x_i^k}{u^k_i} \text{ if } u_i^k<0,\ea \label{imro:findx:breakpnt2}\\
x_i^+ = 0 & \Rightarrow &\displaystyle  \xi_i^+ = \frac{(\bar x_i^k + u^k_i\mu)\sigma^k}{\lambda}\label{imro:findx:breakpnt3}.
\eeqa
Note that by definition of $\mu$, we have
\beq\label{imro:findx:mueq1}
 (u^k)^t x^+ - \mu \sigma^k  = (u^k)^t \bar x^k.
\eeq
Searching over all the breakpoints mentioned in \eqref{imro:findx:breakpnt1} and \eqref{imro:findx:breakpnt2}
(i.e. $\frac{\lambda/\sigma^k-\bar x_i^k}{u^k_i}$ and $\frac{-\lambda/\sigma^k-\bar x_i^k}{u_i^k}$), enables us to find  the proper value of $\mu$ for which \eqref{imro:findx:mueq1} holds. 
By taking the inner product of both sides of \eqref{imro:findx:optcondmu}
with $u^k$, we obtain
\beq 
(u^k)^t x^+ = (u^k)^t\left(\bar x^k -\frac{\lambda}{\sigma^k}\xi^+\right) + (u^k)^tu^k \mu, 
\eeq
hence equation \eqref{imro:findx:mueq1} has the equivalent form of 
\beq \label{imro:findx:mueq2}
(\text{lhs})\ \  (u^k)^t\left(\bar x^k - \frac{\lambda}{\sigma^k}\xi^+ \right) + \l((u^k)^tu^k-\sigma^k\r) \mu = (u^k)^t \bar x^k \ \ (\text{rhs}).
\eeq
Note that both terms on the left-hand side are functions of $\mu$, the first term
via the implicit dependence of $\xi^+_i$ on $\mu$ whenever $x_i^+=0$,
while the second term explicitly depends on $\mu$. 
Thus,
we see that the left-hand side of \eqref{imro:findx:mueq2} is a piecewise linear
continuous function of $\mu$, where the pieces are given by intervals between the
above-mentioned breakpoints.  Furthermore, the slope is always nonpositive
because the second term contributes $(u^k)^tu^k-\sigma^k$ to the slope, a negative
number, while the $i$th contribution from the first term is either 0
(when $x_i^+\ne 0$) or $-({u_i^k})^2$ (when $x_i^+=0$). 

This monotonicity allows us to find the correct $\mu$ solving
\eqref{imro:findx:mueq2}.
To find $\mu$, we sort all the breakpoints (a vector of size $2n$); we start with an initial value of $\mu$ small enough such that lhs$>$rhs; we then increment the value of $\mu$ over the sorted breakpoints until we reach the desired interval $[\mu_l, \mu_u]$ such that lhs$_{\mu_l} >$ rhs and lhs$_{\mu_u} <$ rhs, or the value of $\mu^\ast$ for which lhs$_{\mu^\ast} =$ rhs. In the case that we reach the interval, a simple interpolation solves \eqref{imro:findx:mueq2}.
Note that we may efficiently update the lhs when reaching a breakpoint, since only one of $x^+_i$'s changes sign for each breakpoint. The following chart visualizes how the search process is actually carried out:

$$ u^k_i>0: \ \ \underbrace {-----\mid}_{x_i^+<0} \frac{-\frac{\lambda}{\sigma^k}-\bar x_i^k}{u^k_i}\underbrace{\mid -----\mid}_{x_i^+=0}\frac{\frac{\lambda}{\sigma^k}-\bar x_i^k}{u^k_i} \underbrace{\mid-----}_{x_i^+>0} $$

$$u_i^k<0: \ \ \underbrace {-----\mid}_{x_i^+>0} \frac{\frac{\lambda}{\sigma^k}-\bar x_i^k}{u^k_i}\underbrace{\mid -----\mid}_{x_i^+=0}\frac{-\frac{\lambda}{\sigma^k}-\bar x_i^k}{u^k_i} \underbrace{\mid-----}_{x_i^+<0} $$

The algorithm below summarizes all we said above for finding $\mu$ and $x^{k+1}$. The presented pseudocode is in MATLAB notation. ``slp" in the following algorithm stands for the slope of lhs in \eqref{imro:findx:mueq2} of the current piece (i.e., the derivative with respect
to $\mu$).

\begin{alg}\label{algfindx}
\begin{tabbing}
++\= +++ \= +++\= +++\= +++\=\kill
Input: $\sigma^k$, $u^k$, $\bar x^k$, and $\lambda$\\
 \verb+ slp-Update Subroutine+:\\
\> Let $i=|\bar a(j,2)|$\\
\> {\bf if} $\bar a(j,2)<0$\\
\>\> {\bf if} $u_i^k<0$\\
\>\>\> slp= slp + $(u_i^k)^2$\\
\>\> {\bf else} \\
\>\>\> slp= slp - $(u_i^k)^2$\\
\> {\bf else}\\
\>\> {\bf if} $u_i^k<0$\\
\>\>\> slp= slp - $(u_i^k)^2$\\
\>\> {\bf else} \\
\>\>\> slp= slp + $(u_i^k)^2$\\

\verb+ Main Procedure+\\
\> Let $\mathcal I = \{i: u^k_i\neq 0\}$\\
\> Form $a\in \mathbb R^{2|\mathcal I| \times 2}$ such that $a(i,:)=[\frac{\frac{\lambda}{\sigma^k}-\bar x_i^k}{u^k_i}, +i]$ and $a(|\mathcal I|+i,:)=[\frac{-\frac{\lambda}{\sigma^k}-\bar x_i^k}{u^k_i}, -i]$ \\
\> Let $\bar a:=$ sorted ``$a$" on first column\\
\> Let rhs$:=(u^k)^t\bar x^k$\\
\> Choose $\mu<\bar a(1,1)$ such that lhs$:=(u^k)^tx^\mu-\mu \sigma^k>rhs$,\\
\>\> where $x^\mu$ is derived by \eqref{imro:findx:xform}\\
\>\> slp$=(u_{\mathcal I^0}^k)^tu_{\mathcal I^0}^k-\sigma^k$, where $\mathcal I^0= \{ i: x^\mu_i \neq 0\}$\\
\> {\bf for} $j=1, 2,\ldots 2|\mathcal I|$\\
\>\> Let $\mu^+=\bar a(j,1)$\\
\>\> lhs$^+=$ lhs + slp($\mu^+ - \mu$)\\
\>\> Update $slp$ using \verb+ slp-Update Subroutine+ ($\bar a(j,:)$, slp)\\
\>\> {\bf if} lhs$^+ \le$ rhs\\
\>\>\> $\mu^\ast= \frac{(rhs -lhs^+)\mu}{lhs - lhs^+} + \frac{(lhs - rhs)\mu^+}{lhs - lhs^+} $, \\
\>\>\> Derive $x^{k+1}$ by \eqref{imro:findx:xform} for $\mu=\mu^\ast$.\\
\>\>\> return\\
\>\> $\mu=\mu^+$ and lhs = lhs$^+$\\
\end{tabbing}
\end{alg}

The computation of $x^{k+1}$ can actually be done in linear time, i.e., $O(n)$  (rather than $O(n \log n)$).  The linear-time algorithm for finding $\mu$
is based on finding the median of an unsorted array of size $n$ in linear time.  So after computing the $2n$ breakpoints, we can find the median of the breakpoints and calculate the lhs and rhs of \eqref{imro:findx:mueq2} in $O(n)$. If the lhs$\ge$rhs, then we can discard all the breakpoints below the median. Likewise, if lhs$\le$rhs we can drop all the values above the median. This step can also be done in $O(n)$, and reduces the  size of the problem to $\frac{n}{2}$. The same procedure can be applied to the remaining breakpoints until we reach the desired interval for $\mu$ (an interval $[\mu_l, \ \mu_u]$ such that $lhs_{\mu_l} \ge rhs$ and $lhs_{\mu_u} \le rhs$). Thus, the total running time is of the form $O(n)+O(\frac{n}{2}) + O(\frac{n}{4})+\cdots$ which is $O(n)$.

Two variants of IMRO are proposed in this paper. The difference between these two variants lies in the derivation of $\sigma^k$ and $u^k$. We refer to these variants as IMRO-1D for IMRO on a one-dimensional subspace, and IMRO-2D for IMRO on a two-dimensional subspace.

\subsection{IMRO-1D}\label{sec:imro1d}
In IMRO-1D, we find $\sigma^k$ and $u^k$ such that the approximation model $m_{H^k}(x,x^k)$ equals $\tilde f(x)$ on a one-dimensional affine space $x^k+\alpha v^k$, where $v^k$ is a direction of our choice. We discuss the choice of $v^k$ later in this paper. Moreover, we require $m_{H^k}(x,x^k)$ to be an upper approximation for $\tilde f(x)$. The latter property has some theoretical benefits in the convergence of the algorithm as we shall see in Section \ref{imro:cnvg}. The formal statement of these imposed constraints is 
\beqa
m_{H^k}(x,x^k)&=& \tilde f(x) \ \ \text{whenever} \ \ x\in x^k+ \spn\l\{v^k\r\}, \label{imro1d:eqonv:eq1}\\
m_{H^k}(x,x^k)&\ge & \tilde f(x) \ \ \forall x \in \mathbb R^n,\label{imro1d:modfun:inqe1}
\eeqa
for some nonzero vector $v^k$ to be determined later.
Using \eqref{quadmodelH}, we deduce that  \eqref{imro1d:eqonv:eq1} is equal to
\beq\label{imro1d:eqonv:eq2}
\frac{1}{2}({v^k})^tH^kv^k=\frac{1}{2}(v^k)^tA^tAv^k,
\eeq
and condition \eqref{imro1d:modfun:inqe1} implies that
\beq \label{imro1d:modfun:ineq2}
\frac{1}{2}(x-x^k)^tH^k(x-x^k)\ge \frac{1}{2}(x-x^k)A^tA(x-x^k).
\eeq

Obviously \eqref{imro1d:modfun:ineq2} holds if and only if $H^k\succeq A^tA$. By \eqref{imro1d:eqonv:eq2} and \eqref{imro1d:modfun:ineq2}, the required conditions on $H^k$ boils down to 
\beqa\label{imro1d:Hreq1}
(v^k)^t ( H^k &-& A^tA) v^k =0,\\
H^k&\succeq & A^tA.\label{imro1d:Hreq2}
\eeqa

In the rest of this subsection we show how we can compute $\sigma^k$ and $u^k$ such that the above conditions are satisfied.
\subsubsection{Finding $\sigma^k$ and $u^k$ in IMRO-1D}
Conditions \eqref{imro1d:Hreq1} and \eqref{imro1d:Hreq2} imply that
\beq
 v^k\in \mathcal N(H^k-A^tA).
\eeq
Without loss of generality, we assume that $v^k$ is normalized, i.e., $\|v^k\|=1$. The following lemma gives us the formula for $\sigma^k$ and $u^k$ in IMRO-1D. 

\begin{lem}
\eqref{imro1d:Hreq1} and \eqref{imro1d:Hreq2} are satisfied for 
\beq\label{imro1dsig}
\sigma^k=\|A\|^2,
\eeq
and 
\beq \label{imro1du} 
u^k= \begin{cases}
\frac{\sigma^k v^k -A^tAv^k}{\sqrt{\sigma^k - \|Av^k\|^2}}& \ \ \text{ if $v^k$ is not a dominant singular vector of $A$}, \\
0 & \ \ \text{otherwise}.
\end{cases}
\eeq

 \end{lem}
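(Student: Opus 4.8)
The plan is to verify directly that the proposed $\sigma$ and $u$ satisfy the two required conditions \eqref{imro1d:Hreq1} and \eqref{imro1d:Hreq2}, where $H = \sigma\mathbf I - uu^t$. First I would dispatch the easy case: if $v$ is a dominant singular vector of $A$, then $u=0$ so $H = \|A\|^2\mathbf I$. Then $H - A^tA = \|A\|^2\mathbf I - A^tA \succeq 0$ because $\|A\|^2 = \|A^tA\|$ is the largest eigenvalue of the symmetric positive semidefinite matrix $A^tA$, giving \eqref{imro1d:Hreq2}; and $(H-A^tA)v = \|A\|^2 v - A^tAv = \|A\|^2 v - \|A\|^2 v = 0$ since $v$ is an eigenvector of $A^tA$ for eigenvalue $\|A\|^2$, which gives $v^t(H-A^tA)v = 0$, i.e.\ \eqref{imro1d:Hreq1}. (One should note $\sigma > \|u\|^2 = 0$ here so $H\succ 0$, provided $A\ne 0$.)

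For the main case, I would first record the observation that makes the formula well-defined: when $v$ is a unit vector that is \emph{not} a dominant singular vector, $\|Av\|^2 = v^tA^tAv < \|A\|^2 = \sigma$, so the denominator $\sqrt{\sigma - \|Av\|^2}$ is real and strictly positive and $u$ is a legitimate vector. Next I would check \eqref{imro1d:Hreq1}. Write $w = \sigma v - A^tAv = (H_0 )v$ where I abbreviate, and compute
\beq
v^t(H - A^tA)v = v^t(\sigma\mathbf I - A^tA)v - (v^tu)^2 = (\sigma - \|Av\|^2) - (v^tu)^2 .
\eeq
Since $v^tu = v^tw / \sqrt{\sigma-\|Av\|^2} = (\sigma - \|Av\|^2)/\sqrt{\sigma-\|Av\|^2} = \sqrt{\sigma-\|Av\|^2}$, we get $(v^tu)^2 = \sigma - \|Av\|^2$, and the displayed expression is zero, which is exactly \eqref{imro1d:Hreq1}.

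The substantive step is \eqref{imro1d:Hreq2}, i.e.\ $\sigma\mathbf I - uu^t \succeq A^tA$, equivalently $\sigma\mathbf I - A^tA \succeq uu^t$. Set $M := \sigma\mathbf I - A^tA$; this is positive semidefinite (indeed $\succeq 0$ as in the easy case) and, by construction, $u = Mv / \sqrt{v^tMv}$ (using $v^tMv = \sigma - \|Av\|^2$). So I must show $M \succeq \dfrac{(Mv)(Mv)^t}{v^tMv}$. I expect this to be the main obstacle, though it is a clean one: it is the standard fact that for any PSD matrix $M$ and any vector $v$ with $v^tMv>0$, the rank-one matrix $\frac{(Mv)(Mv)^t}{v^tMv}$ is dominated by $M$ in the Loewner order. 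To prove it, for arbitrary $x$ apply Cauchy--Schwarz in the (semi-)inner product $\langle a,b\rangle_M := a^tMb$: $(x^tMv)^2 = \langle x,v\rangle_M^2 \le \langle x,x\rangle_M\,\langle v,v\rangle_M = (x^tMx)(v^tMv)$, hence $x^t\!\left(\dfrac{(Mv)(Mv)^t}{v^tMv}\right)\!x = \dfrac{(x^tMv)^2}{v^tMv} \le x^tMx$. This holds for all $x$, so $M - uu^t \succeq 0$, which is \eqref{imro1d:Hreq2}. Finally I would remark that combining the two cases establishes the lemma, and note in passing that $H\succ 0$ holds strictly as long as $v^tMv>0$ does not force a zero eigenvalue, which is the regime in which the algorithm operates.
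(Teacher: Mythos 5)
Your proposal is correct and follows essentially the same route as the paper: after the same easy dominant-singular-vector case, both arguments reduce \eqref{imro1d:Hreq2} to the inequality $(x^t(\sigma\mathbf I - A^tA)v)^2 \le (x^t(\sigma\mathbf I - A^tA)x)\,(v^t(\sigma\mathbf I - A^tA)v)$, which the paper obtains by factoring $\sigma\mathbf I - A^tA = BB^t$ and applying ordinary Cauchy--Schwarz, and you obtain directly as Cauchy--Schwarz in the semi-inner product induced by the PSD matrix $\sigma\mathbf I - A^tA$ — the same argument in a slightly more streamlined form (the paper additionally verifies the stronger pointwise identity $(H-A^tA)v=0$, whereas you check only the quadratic form, which suffices for \eqref{imro1d:Hreq1}).
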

\begin{proof}
Note that $\|A\|^2= \lambda_{\max}(A^tA)=\delta^2_{\max}(A)$, where $\lambda_{\max}$ and $\delta_{\max}$ stand for the maximum eigenvalue and maximum singular value, respectively. 
Let us first consider the case where $v^k$ is a dominant singular vector of $A$. In this case $H^k=\sigma^k \mathbf I = \|A\|^2 \mathbf I \succeq A^tA$ and $(\sigma^k \mathbf I - A^tA)v^k=0$, so both requirements hold.

Suppose $v^k$ is not a dominant singular vector of $A$. Then the denominator in the formula for $u^k$ is positive and $u^k$ is defined. We, therefore, have
\beqan
(H^k-A^tA)v^k& =& \sigma^k v^k - \l((u^k)^tv^k\r)u^k - A^tAv^k \\
&=& \sigma^k v^k - \l(\sqrt{\sigma^k - \|Av^k\|^2}\r) \frac{\sigma^k v^k -A^tAv^k}{\sqrt{\sigma^k - \|Av^k\|^2}} - A^tAv^k=0,
\eeqan
which concludes equality \eqref{imro1d:Hreq1}. It remains to show \eqref{imro1d:Hreq2}, that is $x^t(\sigma^k \mathbf I - u^k(u^k)^t)x \ge x^tA^tAx$ for all $x\in \mathbb R^n$. Equivalently, we will show that for all $x\in \mathbb R^n$ such that $\|x\|=1$, we have $\sigma^k \ge x^tA^tAx + \l((u^k)^tx\r)^2$,
i.e.,
$$\sigma^k \ge \sup_{\|x\|=1} \l\| \l(\ba{c} A\\ (u^k)^t \ea\r)x\r\|^2=\l\| \l(\ba{c} A\\ (u^k)^t \ea\r)\r\|^2.$$
In fact, we prove that $\sigma^k =  \l\| \l( \ba{c}A \\ (u^k)^t \ea \r) \r\|^2$.
Clearly 
\beqn
\l\|\l(\ba{c}A\\ (u^k)^t\ea\r)\r\| \ge \|A\|,
\eeqn 
because 
\beqn
\l\|\l(\ba{c}A\\ (u^k)^t\ea\r)x\r\|=\l\|\l(\ba{c}Ax\\ (u^k)^tx\ea\r)\r\| \ge \|Ax\| \ \ \forall x\in \mathbb R^n.
\eeqn

It remains to show that $\l\|\l(\ba{c}A\\ (u^k)^t\ea\r)\r\| \le \|A\|$. By the value of $\sigma^k$, we have $\sigma^k \mathbf I -A^tA\succeq 0$, so we can define $B$ such that  $BB^t=\sigma^k \mathbf I - A^tA$. Note that 
\beqa \label{e3.2.1}
x^t \l( A^t \ \ u^k\r) \l(\ba{c}A\\(u^k)^t\ea\r) x = x^t A^t A x + \l((u^k)^tx\r)^2 &=& x^tA^tAx + \frac{\l(x^t\l(\sigma^k \mathbf I- A^tA\r)v^k\r)^2}{\sigma^k - \|Av^k\|^2}\nonumber \\
&=& x^tA^tAx+ \frac{\l(x^t\l(\sigma^k \mathbf I- A^tA\r)v^k\r)^2}{(v^k)^t\l(\sigma^k - A^tA\r)v^k}\nonumber\\
&=&x^tA^tAx+ \frac{\l(x^tBB^tv^k\r)^2}{(v^k)^tBB^tv^k}\nonumber\\
&\le&x^tA^tAx + x^tBB^tx= \sigma^k x^tx,
\eeqa
where the last inequality is ensured by Cauchy-Schwarz inequality, i.e.,
\beqn
\l(x^tBB^tv^k\r)^2 \le \|B^tx\|^2\|B^tv^k\|^2= \l(x^tBB^x\r) \l((v^k)^tBB^tv^k\r).
\eeqn
Combining the definition of induced matrix norms and the result obtained in \eqref{e3.2.1} we get
\beqn
\l\|\l(\ba{c}A\\ (u^k)^t\ea\r)\r\|= \sup_{x: \|x\|=1} x^t \l( A^t \ \ u^k\r) \l(\ba{c}A\\(u^k)^t\ea\r) x \le \sup_{x:\|x\|=1} \sigma^k x^tx = \sigma^k =\|A\|
\eeqn
which yields the result we wanted to show.
\end{proof}

\subsection{IMRO-2D}\label{sec:imro2d}
In IMRO-2D the quadratic model $m_{H^k}(x,x^k)$ matches the function on the two-dimensional space of $x^k + \spn\l\{\nabla \tilde f^k, d^k\r\}$, where $d^k=x^k-x^{k-1}$. Without loss of generality, let us assume that $\nabla \tilde f^k$ and $d^k$ are normalized. 

The imposed condition for IMRO-2D requires
\beq \label{imro2d:cond1}
\tilde f(x^k) + \langle \nabla \tilde f^k, x-x^k\rangle + \frac{1}{2}(x-x^k)^t H^k(x-x^k) = \frac{1}{2}\|A\l(x^k+ (x-x^k)\r)\|^2, 
\eeq
for all $x\in \l\{x^k+ \spn\l\{\nabla \tilde f^k, d^k\r\} \r\}$, that is when $x-x^k= \alpha \nabla \tilde f^k + \beta d^k$. 

Condition \eqref{imro2d:cond1}, therefore, reduces to 
\beqa \label{imro2d:cond2}
\frac{1}{2}(x-x^k)^tH^k(x-x^k)&=&\frac{1}{2}(x-x^k)^tA^tA(x-x^k), \ \text{i.e.,} \nonumber\\
\frac{1}{2}(\alpha \nabla \tilde f^k + \beta d^k)^t H^k (\alpha \nabla \tilde f^k + \beta d^k) &=& \frac{1}{2} (\alpha \nabla \tilde f^k + \beta d^k)^t A^t A (\alpha \nabla \tilde f^k + \beta d^k) 
\eeqa
for all $\alpha, \beta \in \mathbb{R}$.
The fact that $H^k=\sigma^k \mathbf I - u^k(u^k)^t$ in IMRO enables us to find $\sigma^k, \tau^k,$ and $\rho^k$ such that \eqref{imro2d:cond2} is satisfied for $\sigma^k$ and $u^k=\tau^k \nabla \tilde f^k + \rho^k d^k$. This is the topic covered in the remainder of this subsection. 
\subsubsection{Finding $\sigma^k$ and $u^k$ in IMRO-2D}
By \eqref{imro2d:cond2}, we need to solve
\beq\label{imro2d:sigucomp:modeqfun}
(\alpha \nabla \tilde f^k + \beta d^k)^t A^t A  (\alpha \nabla \tilde f^k + \beta d^k)= (\alpha \nabla \tilde f^k + \beta d^k)^t(\sigma^k I - u^k(u^k)^t) (\alpha \nabla \tilde f^k + \beta d^k),
\eeq
for $\sigma^k$ and $u^k$. We first derive $\sigma^k$, then using $\sigma^k$ we will compute vector $u^k$.

Let $S$ be the following matrix
\beq 
S=\l( \nabla \tilde f^k \ \ d^k\r)^t A^t A\l(\nabla \tilde f^k \ \ d^k\r)= \l(\ba{cc} ({\nabla \tilde f^k})^tA^tA\nabla \tilde f^k &({\nabla \tilde f^k})^tA^tAd^k\\ ({d^k})^tA^tA\nabla \tilde f^k & ({d^k})^tA^tAd^k \ea\r).
\eeq
Then \eqref{imro2d:sigucomp:modeqfun} imposes the following equations on $\sigma^k$ and $u^k$:
\beqa\label{imro2d:sig:eq1}
S_{11}&=& \sigma^k - ({\nabla \tilde f^k})^tu^k(u^k)^t\nabla \tilde f^k, \nonumber\\
S_{12}&=&\sigma^k ({\nabla \tilde f^k})^t{d^k} - ({\nabla \tilde f^k})^tu^k(u^k)^td^k,\\
S_{22}&=& \sigma^k - ({d^k})^tu^k(u^k)^td^k\nonumber.
\eeqa
Set $\eps^k$ as $(\nabla \tilde f^k)^t d^k$, an easily computable scalar. Then by the set of equations in \eqref{imro2d:sig:eq1} we have
\beqan
\det(S)&=& S_{11}S_{22} -S_{12}^2 \\
&=&(\sigma^k)^2\l(1-(\epsilon^k\r)^2) + \sigma^k \l( S_{11}-\sigma^k +S_{22}-\sigma^k + 2\epsilon^k (\sigma^k \epsilon^k- S_{12})\r).
\eeqan
Hence $\sigma^k$ can be calculated by solving the following quadratic equation
\beq \label{imro2d:sig:quad}
(\sigma^k)^2 \l(1-(\epsilon^k)^2\r) + \sigma^k ( -S_{11} -S_{22}+ 2\epsilon^k S_{12}) + \det(S)=0.
\eeq
Suppose $u^k=\tau^k \nabla \tilde f^k + \rho^k d^k$. Using \eqref{imro2d:sig:eq1}, we get
\beqa \label{imro2d:getu:eq1}
\sigma^k - S_{11}&=& (\tau^k)^2 + 2\epsilon^k \tau^k \rho^k  + (\epsilon^k)^2 (\rho^k)^2 = (\tau^k + \epsilon^k \rho^k)^2, \nonumber
\\
\epsilon^k \sigma^k - S_{12}&=& \epsilon^k (\tau^k)^2 + \tau^k \rho^k+ (\epsilon^k)^2 \tau^k \rho^k  + \epsilon^k (\rho^k)^2 \\
&=& (\tau^k + \epsilon^k \rho^k)(\epsilon^k \tau^k +  \rho^k),
\nonumber \\
 \sigma^k - S_{22}&=& (\epsilon^k)^2 (\tau^k)^2 + 2\epsilon^k \tau^k \rho^k  +  (\rho^k)^2 = (\epsilon^k \tau^k +  \rho^k)^2 \nonumber,
\eeqa
so $(\tau^k +  \epsilon^k\rho^k) = \sqrt{\sigma^k - S_{11}}$ and $(\epsilon^k \tau^k +  \rho^k) = \sqrt{\sigma^k - S_{22}}\  \sgn (\epsilon^k  \sigma^k - S_{12})$. Therefore $\tau^k$ and $\rho^k$ are the solutions of the following linear system:
\beq \label{imro2d:getu:linsys}
\l( \ba{cc} 1& \epsilon^k \\ \epsilon^k & 1 \ea \r) \l( \ba{c} \tau^k\\ \rho^k \ea \r) = \l( \ba{c} \sqrt{\sigma^k - S_{11}}\\  \sqrt{\sigma^k - S_{22}}\text{ sgn}(\epsilon^k  \sigma^k - S_{12})\ea \r).
\eeq

\subsubsection{Validity of IMRO-2D}
In what follows, we show that IMRO-2D is a valid algorithm, namely $\exists \ \sigma^k, \tau^k, \rho^k \in \mathbb{R}$ that solve \eqref{imro2d:sig:quad} and \eqref{imro2d:getu:linsys}.

\begin{prop}\label{imro2d:prop1} Let $\eta_1=1-(\epsilon^k)^2$, $\eta_2= -S_{11} -S_{22}+ 2\epsilon^k S_{12}$, and $\eta_3= \det(S)$ be the coefficients of the quadratic equation \eqref{imro2d:sig:quad}. Then 
\beqa 
\eta_1&\ge&0,\\
 \eta_2&\le&0,\\
 \eta_3&\ge&0.
 \eeqa
 \end{prop}
\begin{proof}$ $
\bitm [noitemsep,nolistsep]
\item Note that $(\epsilon^k)^2 \le \epsilon^k \le 1$ because $\eps^k=(\nabla \tilde f^k)^td^k$ and $\|\nabla \tilde f^k\|=\|d^k\|=1$, therefore $\eta_1=1-(\epsilon^k)^2\ge0$. 
\item$\eta_3=\det(S) \text{ and } S\succeq 0 \Rightarrow \eta_3\ge 0$.
\item $(S_{11}-S_{22})^2\ge 0 \ \lra \ S_{11}^2+S_{22}^2+2S_{11}S_{22} \ge 4S_{11}S_{22}\ge 4S_{12}^2\ge 4(\eps^k)^2S_{12}^2$,\\
where the last two inequalities hold by $S\succeq 0$ and $(\eps^k)^2\le1$, respectively; therefore \\
$(S_{11}+S_{22})^2 \ge 4(\eps^k)^2 S_{12}^2\ \Rightarrow S_{11}+S_{22}\ge 2\epsilon^k S_{12} \Rightarrow \eta_2\le 0 $.
\eitm
\end{proof}

\begin{clm}\label{sigmaexists}
Equation \eqref{imro2d:sig:quad} has a real solution, i.e., $\sigma^k$ exists.
\end{clm}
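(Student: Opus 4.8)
The plan is to recognize equation~\eqref{imro2d:sig:quad} as the characteristic equation of a symmetric $2\times2$ matrix pencil. Since $\nabla f^k$ and $d^k$ are normalized, the Gram matrix $G:=\l(\nabla f^k\ \ d^k\r)^t\l(\nabla f^k\ \ d^k\r)$ has unit diagonal and off-diagonal entry $\epsilon$, so $\det G=1-\epsilon^2=\eta_1$; and since $S=\l(A\l(\nabla f^k\ \ d^k\r)\r)^tA\l(\nabla f^k\ \ d^k\r)$ we have $S\succeq0$ and $\det S=S_{11}S_{22}-S_{12}^2=\eta_3$. First I would carry out the routine $2\times2$ determinant expansion
\[
\det(\sigma G-S)=\sigma^2\det G-\sigma\l(S_{11}+S_{22}-2\epsilon S_{12}\r)+\det S=\eta_1\sigma^2+\eta_2\sigma+\eta_3 ,
\]
so that \eqref{imro2d:sig:quad} is precisely $\det(\sigma G-S)=0$.

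Next I would split on whether $\nabla f^k$ and $d^k$ are linearly independent. If they are, then $G\succ0$, so we may write $G=R^tR$ with $R$ invertible; then $\sigma G-S=R^t\l(\sigma\mathbf I-R^{-t}SR^{-1}\r)R$, hence $\det(\sigma G-S)=(\det R)^2\det\l(\sigma\mathbf I-R^{-t}SR^{-1}\r)$, and the matrix $R^{-t}SR^{-1}$ is symmetric (indeed positive semidefinite), so its two eigenvalues are real. These eigenvalues are exactly the roots of \eqref{imro2d:sig:quad}, so a real solution exists. If instead $\nabla f^k$ and $d^k$ are linearly dependent, then (being unit vectors) $d^k=\pm\nabla f^k$, so $\l(\nabla f^k\ \ d^k\r)=\nabla f^k\l(1\ \ \pm1\r)$; consequently both $G$ and $S$ are scalar multiples of the fixed rank-one matrix $\l(1\ \ \pm1\r)^t\l(1\ \ \pm1\r)$, so $\sigma G-S$ has rank at most one and $\det(\sigma G-S)\equiv0$; thus every real $\sigma$ solves \eqref{imro2d:sig:quad}.

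The only point needing care is the degenerate case $\eta_1=0$: one should not simply invoke the quadratic formula there, because $\eta_2$ may vanish too --- indeed $d^k=\pm\nabla f^k$ forces $S_{11}=S_{22}=\pm S_{12}$ and hence $\eta_2=\eta_3=0$, which is exactly why the equation holds identically. As a self-contained alternative avoiding the pencil language, I could verify the discriminant inequality $\eta_2^2-4\eta_1\eta_3\ge0$ directly: expanding gives $\eta_2^2-4\eta_1\eta_3=(S_{11}-S_{22})^2+4\l(S_{12}-\epsilon S_{11}\r)\l(S_{12}-\epsilon S_{22}\r)$, and reading the right-hand side as a quadratic in $\epsilon$ with leading coefficient $4S_{11}S_{22}\ge0$, its $\epsilon$-discriminant is $16(S_{11}-S_{22})^2\l(S_{12}^2-S_{11}S_{22}\r)\le0$ since $S\succeq0$ gives $S_{12}^2\le S_{11}S_{22}$; hence the discriminant is nonnegative for all $\epsilon$, with the $\eta_1=0$ sub-case handled as above. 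I do not expect any genuine obstacle here --- the content is the determinant identity plus an honest treatment of the rank-deficient situation.
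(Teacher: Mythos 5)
Your argument is correct, but it follows a genuinely different route from the paper. The paper proves the claim by brute force on the discriminant: it expands $\eta_2^2-4\eta_1\eta_3$ and exhibits it as the explicit sum of squares $(\eps S_{11}+\eps S_{22}-2S_{12})^2+(1-\eps^2)(S_{11}-S_{22})^2\ge 0$ (so it also implicitly uses $\eps^2\le 1$, i.e.\ Proposition \ref{imro2d:prop1}). You instead identify \eqref{imro2d:sig:quad} as $\det(\sigma G-S)=0$ for the Gram matrix $G$ of $\nabla f^k, d^k$, so that the roots are the generalized eigenvalues of the symmetric pair $(S,G)$; when $G\succ0$ the congruence $\sigma G-S=R^t(\sigma\mathbf I-R^{-t}SR^{-1})R$ makes realness (indeed nonnegativity) of the roots immediate, and when $d^k=\pm\nabla f^k$ you observe the equation is identically zero. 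This buys a conceptual explanation of why real roots must exist, extra information (the roots are the eigenvalues of $A^tA$ restricted to the two-dimensional subspace in the $G$-geometry, hence nonnegative and bounded by $\Vert A\Vert^2$), and an honest treatment of the parallel case $\eta_1=0$, which the paper's claim silently skips and only mentions in a remark after the positive-semidefiniteness claim; what the paper's proof buys is brevity and a purely coordinate-level identity requiring no spectral theory. Your ``self-contained alternative'' is essentially the paper's computation with a different factorization, $(S_{11}-S_{22})^2+4(S_{12}-\eps S_{11})(S_{12}-\eps S_{22})$, argued through an $\eps$-discriminant; it is valid, with the only pedantic point being the sub-case $S_{11}S_{22}=0$ of a vanishing leading coefficient, where $S\succeq0$ forces $S_{12}=0$ and the expression reduces to $(S_{11}-S_{22})^2\ge0$, so nothing breaks.
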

\begin{proof}
We want to show that $\eta_2^2-4\eta_1\eta_3\ge0$. Note that\\
\beqa \label{imro2d:exsig:prf1:e1}
\eta_2^2-4\eta_1\eta_3&=& \l(S_{11}^2 + S_{22}^2 + {4(\eps^k)^2 S_{12}^2} + {2S_{11}S_{22}} -4\eps^k S_{11}S_{12} -4\eps^k S_{22}S_{12}\r) \nonumber \\
&+&\l({{-4S_{11}S_{22}}}+4S_{12}^2+4(\eps^k)^2 S_{11}S_{22} {-4(\eps^k)^2 S_{12}^2}\r).
\eeqa
By making the following substitutions in \eqref{imro2d:exsig:prf1:e1}
\begin{align}
 S_{11}^2&=(1-(\eps^k)^2)S_{11}^2+(\eps^k)^2 S_{11}^2, \\
 S_{22}^2&=(1-(\eps^k)^2)S_{22}^2+(\eps^k)^2 S_{22}^2,\\
 4(\eps^k)^2 S_{11}S_{22}&= 2(\eps^k)^2 S_{11}S_{22}+2(\eps^k)^2 S_{11}S_{22},
  \end{align}
we will get
\beqan 
\eta_2^2-4\eta_1\eta_3&=&(1-(\eps^k)^2)S_{11}^2+{(\eps^k)^2 S_{11}^2}+(1-(\eps^k)^2)S_{22}^2+{(\eps^k)^2 S_{22}^2} -{4(\eps^k) S_{11}S_{12}}\notag \\
& &- {4(\eps^k) S_{22}S_{12}}-2S_{11}S_{22} +{4S_{12}^2} +{2(\eps^k)^2 S_{11}S_{22}}+2(\eps^k)^2 S_{11}S_{22}\notag \\
&=&(\eps^k S_{11} +\eps^k S_{22} -2 S_{12})^2 + \l(1-(\eps^k)^2\r) ( S_{11}-S_{22})^2 \ge 0. \notag
\eeqan
\end{proof}

\begin{clm}\label{imro2d:uex:clm1}
$\sigma^k \ge S_{11}$ and $\sigma^k \ge S_{22}$; therefore $u^k$ exists.
\end{clm}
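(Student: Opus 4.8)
The plan is to pin down which root of the quadratic \eqref{imro2d:sig:quad} to take, namely the \emph{larger} one, and then show that this choice automatically gives $\sigma\ge S_{11}$ and $\sigma\ge S_{22}$; the existence of $u$ then drops out of \eqref{imro2d:getu:linsys}. Write $g(\sigma):=\eta_1\sigma^2+\eta_2\sigma+\eta_3$ for the left-hand side of \eqref{imro2d:sig:quad}. By Property~\ref{imro2d:prop1} we have $\eta_1\ge0$, $\eta_2\le0$, $\eta_3\ge0$, and by Claim~\ref{sigmaexists} the discriminant is nonnegative, so (in the generic case $\eta_1>0$) $g$ has two real roots, both nonnegative by Vieta's formulas; the larger root $\sigma$ is therefore well defined and $\ge0$.

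The crux is a single algebraic evaluation: compute $g$ at $\sigma=S_{11}$ and at $\sigma=S_{22}$. Substituting $\eta_1=1-\eps^2$, $\eta_2=-(S_{11}+S_{22})+2\eps S_{12}$, $\eta_3=S_{11}S_{22}-S_{12}^2$ and cancelling the $S_{11}^2$ and $S_{11}S_{22}$ (resp.\ $S_{22}^2$, $S_{11}S_{22}$) terms, one finds the clean identities
\beqn
g(S_{11})=-(\eps S_{11}-S_{12})^2\le0 \qquad\text{and}\qquad g(S_{22})=-(\eps S_{22}-S_{12})^2\le0.
\eeqn
This is the only real computation, and it parallels the completion-of-squares trick used in the proof of Claim~\ref{sigmaexists}.

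From here the conclusion is immediate. Since $\eta_1\ge0$, the graph of $g$ is an upward parabola, so $\{\sigma:g(\sigma)\le0\}$ is exactly the closed interval bounded by the two roots; as $g(S_{11})\le0$ and $g(S_{22})\le0$, both $S_{11}$ and $S_{22}$ lie in that interval and hence do not exceed the larger root $\sigma$. Thus $\sigma\ge S_{11}$ and $\sigma\ge S_{22}$. (Equivalently: \eqref{imro2d:sig:quad} is $\det(\sigma G-S)=0$ with $G=\l(\ba{cc}1&\eps\\\eps&1\ea\r)\succ0$, so $\sigma$ runs over the values of the Rayleigh quotient $z^tSz/z^tGz$; plugging in $z=e_1$ and $z=e_2$ gives $\lambda_{\max}\ge S_{11}$ and $\lambda_{\max}\ge S_{22}$.) Consequently $\sqrt{\sigma-S_{11}}$ and $\sqrt{\sigma-S_{22}}$ are real, and since $\det G=1-\eps^2>0$ when $\nabla f^k$ and $d^k$ are linearly independent, the $2\times2$ system \eqref{imro2d:getu:linsys} has a unique solution $(\tau,\rho)$, yielding $u=\tau\nabla f^k+\rho d^k$. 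Hence $u$ exists.

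The only delicate point — and the main obstacle to a completely uniform argument — is the degenerate case $\eps^2=1$, in which $\eta_1=0$, the Gram matrix $G$ is singular, and the "interval between the roots" reasoning no longer applies verbatim. I would dispose of this in a sentence: $\eps^2=1$ means $\nabla f^k$ and $d^k$ are parallel, which forces $S_{11}=S_{22}$ and $S_{12}=\pm S_{11}$, so $g$ vanishes identically, $\sigma=S_{11}=S_{22}$ still satisfies the claimed inequalities (with equality), and $u=0$ is an admissible choice; alternatively, simply assume at the outset that $\nabla f^k$ and $d^k$ are linearly independent, as the algorithm does (otherwise it falls back to IMRO-1D).
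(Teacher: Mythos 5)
Your proposal is correct and takes essentially the same route as the paper: the paper's chain of implications beginning with $(\eps S_{11}-S_{12})^2\ge 0$ amounts precisely to your key identity $\eta_1S_{11}^2+\eta_2S_{11}+\eta_3=-(\eps S_{11}-S_{12})^2\le 0$, and its remaining algebra is a by-hand verification that a point where the upward parabola is nonpositive cannot exceed the larger root $\sigma=\bigl(-\eta_2+\sqrt{\eta_2^2-4\eta_1\eta_3}\bigr)/(2\eta_1)$, which is your interval-between-roots step. Your parenthetical Rayleigh-quotient reading of \eqref{imro2d:sig:quad} and your explicit disposal of the degenerate case $\eps^2=1$ (which the paper only sets aside informally, elsewhere) are correct extras but do not change the substance.
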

\begin{proof}
We will prove it for $S_{11}$; the proof for $S_{22}$ would be similar. \\
\beqan
(\eps^k S_{11}-S_{12})^2 \ge 0 & \Rightarrow& S_{12}^2 \ge -(\eps^k)^2 S_{11}^2 + 2\eps^k S_{11}S_{12}\notag, \\
& \Rightarrow& S_{12}^2 - S_{11}S_{22}
\ge -(\eps^k)^2 S_{11}^2 + 2\eps^k S_{11}S_{12}- S_{11}S_{22}+S_{11}^2-S_{11}^2\notag, \\
& \Rightarrow& -\eta_3 \ge S_{11}^2\eta_1 + S_{11}\eta_2\notag, \\
& \Rightarrow& -4\eta_1\eta_3 \ge 4\eta_1\l(S_{11}^2\eta_1+ S_{11}\eta_2\r)= 4S_{11}^2\eta_1^2 + 4S_{11}\eta_1\eta_2\notag, \\
& \Rightarrow& \eta_2^2-4\eta_1\eta_3 \ge 4S_{11}^2\eta_1^2 + 4S_{11}\eta_1\eta_2+\eta_2^2 = \l(2S_{11}\eta_1+\eta_2\r)^2,\notag \\
&\Rightarrow& \sqrt{ \eta_2^2-4\eta_1\eta_3} \ge 2S_{11}\eta_1+\eta_2,\notag \\
&\Rightarrow&\sigma^k = \frac{-\eta_2+\sqrt{ \eta_2^2-4\eta_1\eta_3}}{2\eta_1} \ge S_{11}.\notag 
\eeqan
\end{proof}

\begin{clm}Suppose $\sigma^k$ and $u^k$ are as defined in IMRO-2D by \eqref{imro2d:sig:quad} 
and \eqref{imro2d:getu:linsys}. Then $H^k=\l(\sigma^k\mathbf I - u^k(u^k)^t\r) \succeq 0$.
\end{clm}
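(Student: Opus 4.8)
The plan is to show $\sigma \ge \|u\|^2$, since this is exactly the condition (already noted just after the definition $H = \sigma\mathbf I - uu^t$) that guarantees $H \succ 0$, and in particular $H \succeq 0$. So the entire task reduces to bounding $\|u\|^2 = \tau^2 + 2\epsilon\tau\rho + \rho^2$ from above by $\sigma$. The key observation is that we already have, from the system \eqref{imro2d:getu:eq1}, the identities $(\tau + \epsilon\rho)^2 = \sigma - S_{11}$ and $(\epsilon\tau + \rho)^2 = \sigma - S_{22}$, and by Claim \ref{imro2d:uex:clm1} both right-hand sides are nonnegative, so these square roots are real (which is also what makes $u$ well-defined).

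The main computation I would carry out is to express $\|u\|^2$ in terms of the quantities $\sigma - S_{11}$ and $\sigma - S_{22}$. Writing $a := \tau + \epsilon\rho$ and $c := \epsilon\tau + \rho$, I would solve the $2\times 2$ system \eqref{imro2d:getu:linsys} to get $\tau = (a - \epsilon c)/(1-\epsilon^2)$ and $\rho = (c - \epsilon a)/(1-\epsilon^2)$, and then substitute into $\|u\|^2 = \tau^2 + 2\epsilon\tau\rho + \rho^2$. A cleaner route: note that $\tau^2 + 2\epsilon\tau\rho + \rho^2 = \tau(\tau+\epsilon\rho) + \rho(\epsilon\tau+\rho) = \tau a + \rho c$, and then substitute the formulas for $\tau,\rho$ to obtain $\|u\|^2 = (a^2 - 2\epsilon a c + c^2)/(1-\epsilon^2)$. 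So the claim $\sigma \ge \|u\|^2$ becomes
\beq
\sigma(1-\epsilon^2) \ge a^2 - 2\epsilon a c + c^2 = (\sigma - S_{11}) - 2\epsilon a c + (\sigma - S_{22}),
\eeq
i.e. $2\epsilon a c \ge \sigma - \epsilon^2\sigma - S_{11} - S_{22} + 2\sigma = 3\sigma - \epsilon^2\sigma - S_{11} - S_{22}$; I should double-check the bookkeeping here, but after using the quadratic \eqref{imro2d:sig:quad} that $\sigma$ satisfies (equivalently the middle relation $\epsilon\sigma - S_{12} = ac$ from \eqref{imro2d:getu:eq1}, which gives $2\epsilon a c = 2\epsilon^2\sigma - 2\epsilon S_{12}$) the inequality should collapse to something already known, most plausibly to $\sigma(1-\epsilon^2) \ge$ a combination that reduces to the fact that $\sigma$ is the \emph{larger} root of \eqref{imro2d:sig:quad}, or equivalently to $2\sigma\eta_1 + \eta_2 \ge 0$, which was established inside the proof of Claim \ref{imro2d:uex:clm1}.

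An alternative and perhaps more transparent approach, which I would fall back on if the algebra above is messy, is the spectral one used for IMRO-1D: restricted to the subspace $V = \spn\{\nabla f^k, d^k\}$ we have $v^t H v = v^t A^t A v$ for all $v \in V$ by construction \eqref{imro2d:cond2}, and $u \in V$; so it suffices to check that $H$ acts correctly off $V$ too, namely that $H$ agrees with $\sigma\mathbf I$ on $V^\perp$ (immediate, since $u \in V$) and that the $2\times 2$ block of $H$ on $V$ is PSD. That block equals the $2\times 2$ block of $A^tA$ on $V$, i.e. $S$, which is PSD as a principal submatrix of $A^tA$; combined with $\sigma \ge 0$ on $V^\perp$ this gives $H \succeq 0$. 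The main obstacle I anticipate is purely the careful bookkeeping in the first approach — keeping straight the three relations in \eqref{imro2d:getu:eq1}, the quadratic \eqref{imro2d:sig:quad}, and the sign of $\epsilon\sigma - S_{12}$ — rather than any conceptual difficulty; the subspace argument sidesteps this entirely, so I would likely present that one as the clean proof and relegate the explicit $\|u\|^2 \le \sigma$ verification to a remark.
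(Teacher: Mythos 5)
Your preferred ``subspace'' argument is correct and is a genuinely different route from the paper's. The paper proves the claim purely algebraically: it combines the three relations in \eqref{imro2d:getu:eq1} (multiplying the middle one by $-2\eps$ and adding the other two) to get $\eta_1\|u\|^2=2\eta_1\sigma+\eta_2$, hence $\|u\|^2=\sqrt{\eta_2^2-4\eta_1\eta_3}/\eta_1$, and then $\sigma-\|u\|^2=\bigl(-\eta_2-\sqrt{\eta_2^2-4\eta_1\eta_3}\bigr)/(2\eta_1)\ge 0$ by Proposition \ref{imro2d:prop1}. Your first sketch is this same computation in disguise, but note the bookkeeping you flagged does matter: the inequality reduces to $\sigma\eta_1+\eta_2\le 0$ (equivalently $\sqrt{\eta_2^2-4\eta_1\eta_3}\le-\eta_2$, i.e.\ $\eta_1\eta_3\ge 0$), not to $2\sigma\eta_1+\eta_2\ge 0$, which is trivially true and not what is needed; also your intermediate right-hand side $3\sigma-\eps^2\sigma-S_{11}-S_{22}$ should be $\sigma+\eps^2\sigma-S_{11}-S_{22}$. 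The subspace proof buys conceptual clarity: writing $x=x_V+x_\perp$ with $V=\spn\{\nabla f^k,d^k\}$ and $u\in V$, the cross terms vanish, so $x^tHx=x_V^tHx_V+\sigma\|x_\perp\|^2$ with $x_V^tHx_V=x_V^tA^tAx_V\ge 0$ and $\sigma\ge S_{11}\ge 0$ by Claim \ref{imro2d:uex:clm1}. The one step you should not wave away is that the constructed $(\sigma,u)$ really do satisfy the quadratic-form identity on $V$: the diagonal relations are immediate from \eqref{imro2d:getu:linsys}, but the off-diagonal one, $(\nabla f^k)^tHd^k=S_{12}$, requires $(\tau+\eps\rho)(\eps\tau+\rho)=\eps\sigma-S_{12}$, which holds exactly because $\sigma$ is a root of \eqref{imro2d:sig:quad} (that quadratic is precisely $(\sigma-S_{11})(\sigma-S_{22})=(\eps\sigma-S_{12})^2$) together with the sign choice in \eqref{imro2d:getu:linsys}; also, $S$ is positive semidefinite as the congruence $\bigl(A(\nabla f^k\ \ d^k)\bigr)^t\bigl(A(\nabla f^k\ \ d^k)\bigr)$, not as a principal submatrix of $A^tA$. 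What the paper's algebraic route buys in exchange is the explicit value $\|u\|^2=\sqrt{\eta_2^2-4\eta_1\eta_3}/\eta_1$, which immediately gives the strict statement recorded right after the claim ($H\succ 0$ unless $\eta_1=0$ or $\eta_3=0$), something the subspace argument does not provide without extra work.
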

\begin{proof}
We will prove that $\sigma^k \ge \|u^k\|^2$, which implies that $\sigma^k\|x\|^2 \ge \|u^k\|^2\|x\|^2 \ge \l((u^k)^tx\r)^2$ for all $x \in \mathbb R^n$; thus $H^k\succeq 0$.

Recall that $u^k=\tau^k \nabla  \tilde f^k +\rho^k d^k$, $\| \nabla \tilde f^k\|=\|d^k\|=1$ and $\eps^k=({\nabla \tilde f^k})^td^k$ by definition, so 
\beq \|u^k\|^2 = (\tau^k)^2+(\rho^k)^2 + 2\tau^k \rho^k \eps^k. \eeq
In addition, recall \eqref{imro2d:getu:eq1} in which we had 
\beqa \notag
\sigma^k - S_{11}&=& (\tau^k)^2 + 2\epsilon^k \tau^k \rho^k  + (\epsilon^k)^2 (\rho^k)^2, 
\\
\epsilon^k \sigma^k - S_{12}&=& \epsilon^k (\tau^k)^2 + \tau^k \rho^k+ (\epsilon^k)^2 \tau^k \rho^k  + (\epsilon^k) (\rho^k)^2, \notag
\\
 \sigma^k - S_{22}&=& (\epsilon^k)^2 (\tau^k)^2 + 2\epsilon^k \tau^k \rho^k  +  (\rho^k)^2 \notag.
\eeqa
Let us multiply the second equation by $-2\eps^k$ and add the result to the summation of the other two equations to get 
\beqan
\text{lhs}&=& \sigma^k -S_{11} -2(\eps^k)^2 \sigma^k + 2\eps^k S_{12} + (\sigma^k) -S_{22}= 2\l(1-(\eps^k)^2\r)\sigma^k + \eta_2 = 2 \eta_1 \sigma^k +\eta_2 \notag, \\
\text{rhs}&=& {(\tau^k)^2} + {{2 \eps^k\tau^k \rho^k}} + {(\eps^k)^2 (\rho^k)^2} - {2(\eps^k)^2 (\tau^k)^2} -{{2\eps^k \tau^k \rho^k}} -{2(\eps^k)^3 \tau^k \rho^k} \\
&&- {2(\eps^k)^2(\rho^k)^2 }+ {(\eps^k)^2 (\tau^k)^2} + {2(\eps^k) \tau^k \rho^k }+ {(\rho^k)^2}\notag \\
&=&(\tau^k)^2(1-(\eps^k)^2)+2\eps^k \tau^k \rho^k (1-(\eps^k)^2) + (\rho^k)^2 \l(1-(\eps^k)^2\r)\\
&=& \l(1-(\eps^k)^2\r)\|u^k\|^2= \eta_1\|u^k\|^2\notag, \\
&\Rightarrow& 2\eta_1\sigma^k +\eta_2= \eta_1\|u^k\|^2 \notag, \\
&\Rightarrow& {-\eta_2}+\sqrt{\eta_2^2-4\eta_1\eta_3}+{\eta_2} = \eta_1\|u^k\|^2 \notag, \\
&\Rightarrow& \|u^k\|^2=\frac{\sqrt{\eta_2^2-4\eta_1\eta_3}}{\eta_1}.\notag 
\eeqan
By the value of $\sigma^k$, we have 
\beqn
\sigma^k -\|u^k\|^2 = \frac{-\eta_2+\sqrt{\eta_2^2-4\eta_1\eta_3}}{2\eta_1}-\frac{\sqrt{\eta_2^2-4\eta_1\eta_3}}{\eta_1}=\frac{-\eta_2-\sqrt{\eta_2^2-4\eta_1\eta_3}}{2\eta_1}\ge 0,
\eeqn
where the final inequality holds by property \eqref{imro2d:prop1}, i.e., 
\beqn
\eta_1\ge 0, \ \eta_3\ge 0 \Rightarrow -4\eta_1\eta_3\le 0,
\eeqn
 hence
 \beqn
  \eta_2^2-4\eta_1\eta_3\le \eta_2^2 \Rightarrow \sqrt{\eta_2^2-4\eta_1\eta_3}\le |\eta_2|=-\eta_2.
 \eeqn
\end{proof}

Note that $\sigma^k > \|u^k\|^2$ unless $\eta_1=0$ (i.e. $\eps^k=0$) or $\eta_3=0$ (i.e. $\det(S)=0$). Both cases happen only if $\nabla \tilde f^k$ and $d^k$ are parallel, otherwise $H^k= \sigma^k \mathbf I -u^k(u^k)^t \succ 0$.

Before we start the analysis on the convergence of IMRO, we would like to point out that IMRO-2D reduces to linear CG (LCG) in the absence of the regularizer's term, i.e., $\lambda \|x\|_1$. 
The following theorem explains why IMRO-2D is essentially linear CG when the regularization term is missing. Note that the theorem is not trivial because, although CG and IMRO-2D form a model in the same two dimensional subspace, IMRO-2D looks for the minimizer in the full space. In the $\lambda=0$ case, the theorem shows that the minimizer occurs in the same subspace.
\begin{thm}
Suppose IMRO-2D is applied to minimizing the quadratic function $\frac{1}{2} \|Ax-b\|^2$. Then the sequence of iterates generated by IMRO-2D is the same as iterates generated in linear CG.
\end{thm}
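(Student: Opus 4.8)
### Proof Proposal

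The plan is to show by induction that the iterate $x^{k+1}$ produced by IMRO-2D, when no $\ell_1$ term is present, coincides with the linear CG iterate, by exploiting the fact that IMRO-2D is designed precisely so that the quadratic model $m_H(\cdot,x^k)$ agrees with $f(x)=\frac12\|Ax-b\|^2$ on the affine plane $x^k+\spn\{\nabla f^k,d^k\}$ with $d^k=x^k-x^{k-1}$. When $p\equiv 0$, the subproblem \eqref{minquadmodelH} is just minimizing $m_H(x,x^k)$, and the unconstrained minimizer of $m_H(\cdot,x^k)$ lies in the plane $x^k+\spn\{\nabla f^k,d^k\}$ (since $H=\sigma\mathbf I-uu^t$ with $u\in\spn\{\nabla f^k,d^k\}$ gives $H^{-1}\nabla f^k\in\spn\{\nabla f^k,u\}\subseteq\spn\{\nabla f^k,d^k\}$ by Sherman--Morrison). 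On that plane the model equals the true quadratic $f$, so minimizing $m_H$ over $\mathbb R^n$ gives the same point as minimizing $f$ over the two-dimensional affine plane $x^k+\spn\{\nabla f^k,d^k\}$.

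First I would recall the defining property of linear CG: the $(k{+}1)$st CG iterate is the minimizer of $f$ over the affine Krylov space $x^0+\mathcal K_{k+1}(A^tA,r^0)$, and equivalently — by the expanding-manifold / two-term-recurrence property — it is the minimizer of $f$ over $x^k+\spn\{\nabla f^k,\,x^k-x^{k-1}\}$. So the reduction above already matches the CG characterization, provided the earlier iterates agree. Thus the argument is an induction on $k$: assume $x^0,\dots,x^k$ (and hence $d^k$, $\nabla f^k$) coincide with the CG iterates; then both IMRO-2D and CG pick $x^{k+1}$ as the exact minimizer of the same quadratic $f$ over the same two-dimensional affine plane, so $x^{k+1}$ agrees as well. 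The base case $k=0$: with $d^0=0$ (or undefined), IMRO-2D's plane degenerates to $x^0+\spn\{\nabla f^0\}$, i.e. an exact line search along the negative gradient, which is exactly the first CG step (= first steepest-descent step).

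The key steps, in order: (i) write $H^{-1}$ via Sherman--Morrison and verify $x^k-H^{-1}\nabla f^k\in x^k+\spn\{\nabla f^k,d^k\}$, so the model's minimizer stays in the plane; (ii) invoke the IMRO-2D construction (Claims in Section~\ref{sec:imro2d}, which guarantee $\sigma,u$ exist and $H\succ 0$ whenever $\nabla f^k,d^k$ are not parallel) to conclude $m_H(x,x^k)=f(x)$ for every $x$ in that plane; (iii) conclude that $\arg\min_x m_H(x,x^k)=\arg\min\{f(x):x\in x^k+\spn\{\nabla f^k,d^k\}\}$; (iv) quote the equivalent "two previous directions" characterization of the linear CG iterate; (v) run the induction. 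I would also handle the degenerate case $\nabla f^k\parallel d^k$ separately — but this is exactly the situation where CG has already converged (the gradient lies in the span of the last step), so $x^{k+1}=x^k$ in both methods, or one simply notes the model then matches $f$ on the single line spanned by $\nabla f^k$.

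The main obstacle I anticipate is step (iv): being careful about \emph{which} equivalent description of the CG iterate to use and making the match airtight. The cleanest route is the classical fact (Hestenes--Stiefel) that CG with exact arithmetic satisfies $x^{k+1}=\arg\min\{f(x): x\in x^k + \spn\{r^k, p^{k-1}\}\}$ where $r^k=-\nabla f^k$ and $p^{k-1}$ is the previous search direction — and then to observe $\spn\{r^k,p^{k-1}\}=\spn\{\nabla f^k, x^k-x^{k-1}\}$ because $x^k-x^{k-1}=\alpha_{k-1}p^{k-1}$ for the CG steplength $\alpha_{k-1}\ne 0$. A secondary subtlety is ensuring the IMRO-2D plane is genuinely two-dimensional (so that $\nabla f^k$ and $d^k=x^k-x^{k-1}$ are linearly independent at each step prior to convergence); this follows from standard CG theory ($r^k\ne 0 \Rightarrow r^k\notin\spn\{p^{k-1}\}$ before termination), but it should be stated, since it is what keeps $H$ positive definite and the subproblem well posed throughout the run.
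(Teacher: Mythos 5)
Your proposal is correct and follows essentially the same route as the paper: an induction in which the base step is an exact line search along $-\nabla f^0$ (matching CG's first step), and the inductive step uses the Sherman--Morrison form of $H^{-1}$ to show $x^{k+1}\in x^k+\spn\{\nabla f^k, d^k\}$, so that the model's exactness on that plane forces $x^{k+1}$ to be the minimizer of $f$ over the same two-dimensional affine set that characterizes the CG iterate. Your added care in justifying the CG characterization via $d^k=\alpha_{k-1}p^{k-1}$ and in treating the degenerate parallel case only tightens details the paper leaves implicit.
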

\begin{proof}
Notice that 
$$\tilde f(x)=\frac{1}{2} \|Ax-b\|^2= \frac{1}{2}x^tA^tAx - x^tA^tb + \frac{1}{2} b^tb.$$
Let $Q$ and $c$ denote $A^tA$ and $A^tb$, respectively. The proof is by induction. Let $x^0$ and $r^0=\nabla \tilde f(x^0)=Qx^0-c$ be the starting point for both algorithms. The subscript CG distinguishes iterates for LCG from iterates obtained by IMRO-2D.  
For IMRO-2D at the first iteration we have 
$$\frac{1}{2}({r^0})^t (\sigma^0 \mathbf I) r^0 = \frac{1}{2} ({r^0})^t A^tAr^0\ \  \Rightarrow \ \ \sigma^0= \frac{(r^0)^t A^tAr^0}{(r^0)^tr^0},$$
and $$x^1= x^0 + \frac{1}{\sigma^0}(-r^0).$$
By the fact that for LCG, $p_{CG}^0=-r^0$ and $\alpha^0=\frac{(r^0)^tr^0}{(r^0)^tA^tAr^0}=\frac{1}{\sigma^0}$, we get $x_{CG}^1= x^1$. Suppose this holds true for $k$, i.e., $x^k_{CG}=x^k$. Because $m_{H^k}(x,x^k)= \tilde f(x)$ on the space of $x^k + \spn\l\{ r^k, d^k\r\}$, if $x^{k+1} \in x^k + \spn\l\{ r^k, d^k\r\}$ then $x^{k+1}$ must be $x_{CG}^{k+1}$. Therefore it suffices to show that $x^{k+1} \in x^k + \spn\l\{ r^k, d^k\r\}$.

Using optimality condition for our model $m_{H^k}(x,x^k)$ we get that 
\beqan
x^{k+1}&=&x^k - (H^k)^{-1}r^k= x^k - \l( \sigma^k \mathbf I - u^k(u^k)^t\r)^{-1} r^k\\
&=& x^k - \l( \frac{1}{\sigma^k}\mathbf I - 
\frac{1}{\sigma^k(\|u^k\|^2-\sigma^k)} u^k(u^k)^t \r) r^k\\
&=& x^k - \frac{1}{\sigma^k}r^k + 
\frac{(u^k)^t  r^k}{\sigma^k(\|u^k\|^2-\sigma^k)} (\tau^k r^k + \rho^k d^k) \in \l\{x^k + \spn\{r^k,d^k\}\r\}.
\eeqan
\end{proof}

\newpage
The general framework of IMRO is captured in the following algorithm.

\begin{alg}\label{imroalg}
\begin{tabbing}
++\= +++\= +++\= +++\= +++ \= \kill 
\>Let $x^0 \in \mathbb R^n$ be an arbitrary starting point and $x^1 =\prox_{\tilde p}(x^0 - \nabla \tilde f^0/\sigma^0)$. \\
\>\> {\bf for} $k=1,2,\ldots$\\
\>\>\> Find $\sigma^k$ and $u^k$:\\
\>\>\>\> equations \eqref{imro1dsig} and \eqref{imro1du} for IMRO-1D \\
\>\>\>\> equations \eqref{imro2d:sig:quad} and \eqref{imro2d:getu:linsys} for IMRO-2D \\
\>\>\> Find $x^{k+1}$ using Algorithm \ref{algfindx}\\
\>\>\> Update $\nabla \tilde f^k$ and $d^k$
\end{tabbing}
\end{alg}

Note that we have not explained the choice of vector $v^k$ (which determines $u^k$) in IMRO-1D. Our choice of $v^k$ is discussed in Section \ref{ch:imroNR}.
The termination criterion used for IMRO is the measurement on the norm of subgradient of function $\tilde F(x)$, $\xi$, which at iteration $k$ is 
\beqan
\xi_i &=\lambda + \nabla \tilde f^k_i  \quad & \text {if} \ x_i>0,\\
\xi_i &=-\lambda + \nabla \tilde f^k_i  \quad & \text {if} \ x_i<0,\\
\xi_i &=-\lambda \alpha + \nabla \tilde f^k_i \quad & \text{if} \ x_i=0 \ \ \text{for some} \ \ \alpha\in [-1,1].
\eeqan

Note that for zero entries, if $|\nabla \tilde f^k_i| \le \lambda $, then there exists an $\alpha$ such that $\xi_i=0$. In case that $|\nabla \tilde f^k_i| > \lambda$, then $\xi_i \neq 0$; so we take $\alpha$ such that $\xi_i$ is minimized, i.e., $\xi_i=|\nabla \tilde f_i^k| - \lambda$. Therefore,
the norm of subgradient at $x^k$ is easily calculated with no extra computational cost. 

\section{Convergence of IMRO}\label{imro:cnvg}
The difference between IMRO and other proximal quasi-Newton methods is the special structure of $H^k$. The format of $H^k$ in IMRO facilitates computation of the next iterate as mentioned earlier. The convergence properties of IMRO, however, can mostly be generalized to other variants of proximal quasi-Newton methods. 

In the preceding sections, we established that $H^k\succeq 0$ for both IMRO-1D and IMRO-2D. Furthermore, the conditions under which $H^k$ is singular are apparently unusual (that $v^k$ is a dominant singular vector of $A$ in the case of IMRO-1D; that $\nabla \tilde f^k$ and $d^k$ are
parallel in the case of IMRO-2D) and never arose in our computational experiments. Therefore, for the remainder of this section, we assume $H^k\succ 0$. If one of these unusual cases arose in practice, we could simply modify the algorithm by replacing  $\sigma^k$ with  $\sigma^k+\epsilon$ for some small $\epsilon>0$ to ensure that $H^k\succ 0$. Consider problem \eqref{mincompfunintro}, where $p(x)$ is a possibly nonsmooth convex function, and $m_{H^k}(x,x^k)$ as defined in \eqref{quadmodelH}. Our notation is as follows:
\beqa \label{imro:cnvg: model}
M_{H^k}(x,x^k):= m_{H^k}(x,x^k) + p(x) ,
\eeqa
and 
\beqa\label{imrocnvgscheme}
M_{H^k}^\ast &:=& \min_{x} \ \  M_{H^k}(x,x^k),\notag \\
x_{H^k}^\ast &:=& \arg\min_{x}\ \  M_{H^k}(x,x^k),\\
g_{H^k}  & := & H^k( x^k - x_{H^k}^\ast). \notag
\eeqa
Note that optimality conditions for \eqref{imro:cnvg: model} implies that 
\beq \label{imro:cnvg:geq1}
g_{H^k}= \nabla f^k + \xi^{k+}, 
\eeq
where $\xi^{k+} \in \partial\l(p(  x^\ast_{H^k})\r)$. We will see in this section that the notion of scaled gradient, $g_{H^k}$, mimics some of the properties of gradient. An important property of $g_{H^k}$ is captured below. 
\begin{prop}
$g_{H^k}=0$ if and only if $x^k$ is the optimizer of problem \eqref{mincompfunintro}. 
\end{prop}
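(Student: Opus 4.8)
The plan is to argue that $g_H^k = 0$ is precisely the first-order optimality condition for the original composite problem \eqref{mincompfunintro}, which is both necessary and sufficient because $F$ is convex. First I would use the definition $g_H^k = H(x^k - \bar x_H^k)$ together with the characterization \eqref{imro:cnvg:geq1}, namely $g_H^k = \nabla f^k + \xi^{k+}$ with $\xi^{k+} \in \partial p(\bar x_H^k)$.

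Next I would prove the forward direction: suppose $g_H^k = 0$. Since $H \succ 0$, the equation $H(x^k - \bar x_H^k) = 0$ forces $\bar x_H^k = x^k$. Substituting into \eqref{imro:cnvg:geq1} gives $0 = \nabla f^k + \xi^{k+}$ with $\xi^{k+} \in \partial p(x^k)$, so $0 \in \nabla f(x^k) + \partial p(x^k) = \partial F(x^k)$. Because $F = f + p$ is convex, this subgradient inclusion is equivalent to $x^k$ being a global minimizer of $F$, i.e., a solution of \eqref{mincompfunintro}.

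For the converse, suppose $x^k$ is the optimizer of \eqref{mincompfunintro}, so $0 \in \partial F(x^k) = \nabla f(x^k) + \partial p(x^k)$; pick $\xi \in \partial p(x^k)$ with $\nabla f^k + \xi = 0$. I then check that $x = x^k$ satisfies the optimality condition for $\min_x M_H(x, x^k)$: the model $M_H^k$ is strongly convex (since $H \succ 0$), hence has a unique minimizer characterized by $0 \in H(x - x^k) + \nabla f^k + \partial p(x)$. At $x = x^k$ the first term vanishes and $\nabla f^k + \xi = 0$ with $\xi \in \partial p(x^k)$, so $x^k$ is that unique minimizer, i.e., $\bar x_H^k = x^k$, and therefore $g_H^k = H(x^k - \bar x_H^k) = 0$.

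The main thing to be careful about — rather than a genuine obstacle — is invoking convexity of $F$ at the right moment: the equivalence between the stationarity condition $0 \in \partial F(x^k)$ and global optimality relies on $F$ being convex (which holds since $f \in \mathbf{C}_L$ is convex and $p$ is convex), and the uniqueness of the model minimizer relies on $H \succ 0$, which we have assumed for the remainder of the section. No heavy computation is needed; the proof is essentially unwinding the two definitions of $g_H^k$ and matching them against the subdifferential optimality condition.
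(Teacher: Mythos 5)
Your proposal is correct and follows essentially the same route as the paper's own proof: both directions unwind the definition $g_H^k=H(x^k-\bar x_H^k)$ together with the optimality condition \eqref{imro:cnvg:geq1} and match it against the subdifferential condition $0\in\nabla f(x^k)+\partial p(x^k)$ for \eqref{mincompfunintro}. Your write-up merely spells out the details (convexity of $F$ for global optimality, strong convexity of $M_H^k$ for uniqueness of $\bar x_H^k$) that the paper's terser proof leaves implicit.
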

\begin{proof}
Note that if $g_{H^k}=0$, then $x^k -  x^\ast_{H^k} = 0$ because $H^k\succ 0$ (thus invertible). This implies that $x^k=  x^\ast_{H^k}$. Therefore \eqref{imro:cnvg:geq1} reduces to optimality condition for \eqref{mincompfunintro}. Likewise if $x^k$ is the optimal solution of \eqref{mincompfunintro}, then $\nabla f^k + \xi^k=0$ implies that $x^\ast_{H^k} =x^k$; thus $g_{H^k}=0$.
\end{proof}

The following lemma (which is partially based on \cite[Proposition 2.3]{ProxNMcompfun}) shows that in fact direction $x^\ast_{H^k} - x^k$ is a descent direction; in other words using this direction armed with a line search we attain the next iterate, $x^{k+1}$, for which we have $F(x^{k+1}) < F(x^k)$. 

\begin{lem} \label{lem:decsq} Suppose $x^\ast_{H^k}$ is as defined in \eqref{imrocnvgscheme} for some $H^k\succ 0$. Let the iterative scheme of
\beq\label{ItrSchm}
x^{k+1}= x^k + \alpha^k (x^\ast_{H^k} - x^k)
\eeq
be applied to problem \eqref{mincompfunintro} while $x^k$ is not the optimizer of \eqref{mincompfunintro}.
Then the following properties hold true:
\begin{enumerate}
\item $F(x^{k+1}) < F(x^k)$ for sufficiently small step size $\alpha^k>0$.
\item In addition, if \beq \label{imro:flessthanm}
F(x^\ast_{H^k}) \le M_{H^k}(x_{H^k}^\ast, x^k).  
\eeq 
holds for $x^\ast_{H^k}$, then $F(x^{k+1}) < F(x^k)$, for any $\alpha^k\in (0,1]$.
\end{enumerate}
\end{lem}
\begin{proof}
Let us denote $x^\ast_{H^k} - x^k$ by $d^k$. Since $H^k \succ 0$, $ x^\ast_{H^k}$ is the unique optimizer of \eqref{imrocnvgscheme} and $x^\ast_{H^k}\ne x^k$ by assumption. Hence we have 
\beqan
\frac{1}{2} \|d^k\|_{H^k}^2 & + & \langle \nabla f^k, d^k\rangle + f(x^k) + p( x^\ast_{H^k})\\
& <& \frac{(\alpha^k)^2}{2} \|d^k\|_{H^k}^2 + \alpha^k\langle \nabla f^k, d^k\rangle + f(x^k) + p(x^{k+1})\notag \\
&\le& \frac{(\alpha^k)^2}{2} \|d^k\|_{H^k}^2 + \alpha^k\langle \nabla f^k, d^k\rangle + f(x^k) + \alpha^k p( x^\ast_{H^k})+ (1-\alpha^k)p(x^{k}),\notag 
\eeqan
where the last inequality follows from convexity of $p(x)$. Rearranging the terms and dividing by $1-\alpha^k$ we get 
\beq \label{imro:cnvg:redprfeq1}
\langle \nabla f^k, d^k\rangle + p(x^\ast_{H^k}) - p(x^k) < - \frac{1+\alpha^k}{2} \|d^k\|_{H^k}^2.
\eeq
Using the convexity of $p(x)$ and the Taylor expansion for $f(x)$, we derive 
\beqa
F(x^{k+1}) - F(x^k) &=& f(x^{k+1}) - f(x^k) + p(x^{k+1}) - p(x^k) \nonumber \\
&\le & \alpha^k \langle \nabla f^k, d^k\rangle + O\l(({\alpha^k})^2\r) + \alpha^k p(x^\ast_{H^k}) + (1-\alpha^k) p(x^k) - p(x^k)\nonumber \\
&=& \alpha^k \l[ \langle \nabla f^k, d^k\rangle + p( x^\ast_{H^k}) -p(x^k)\r] + O\l((\alpha^k)^2\r) < 0,\label{imro:cnvg:redprfeq2}
\eeqa
by \eqref{imro:cnvg:redprfeq1} for sufficiently small values of $\alpha^k$. This concludes the proof of the first property. \\
By our assumption and the fact that  $x_{H^k}^\ast$ is the unique minimizer of $M_{H^k}(x,x^k)$, we get 
$$F(x^\ast_{H^k}) \le M_{H^k}(x_{H^k}^\ast,x^k) < M_{H^k}(x^k,x^k) = F(x^k).$$
We now attain the desired result using the convexity of $F(x)$:
$$F(x^{k+1}) \le \alpha^k F(x^\ast_{H^k}) + (1-\alpha^k) F(x^k)< F(x^k).$$
\end{proof}

The first property in Lemma \ref{lem:decsq} establishes that both variants of IMRO generate a decreasing sequence when paired with a line search. Moreover, the condition in the second property always holds true for IMRO-1D meaning that IMRO-1D generates a decreasing sequence for any step size regardless of the employed line search.
\\

The following lemma which is similar to \cite[Lemma 2.3]{BT:fista} is the essence of showing the convergence properties of IMRO-1D. Here, we assume that no line search is employed, i.e., the step size of $\alpha^k$ in \eqref{ItrSchm} is 1. 
\begin{lem}\label{imro:cnvg:LBonFlem1}
Let $x^\ast_{H^k}$ and $g_{H^k}$ be as defined in \eqref{imrocnvgscheme} for some $H^k\succ 0$.
Suppose the scheme of 
\beq\label{ItrSchm2}
x^{k+1}=x^\ast_{H^k}
\eeq
is applied to problem \eqref{mincompfunintro}, and 
$$F(x^{k+1})\le M_{H^k}(x^{k+1},x^k).$$ 
Then the following inequalities hold:
\begin{enumerate}
\item
\beq \label{imro:cnvg:lbeq1} F(x)-F(x^{k+1})\ge \frac{1}{2} \|x^{k+1}-x^k\|_{H^k}^2 + \langle g_{H^k}, x-x^k \rangle, \ \text{for } \forall x \in \mathbb R^n.\eeq
\item \beq \label{imro:cnvg:lbeq2}
F(x^k)-F(x^{k+1}) \ge \frac{\l(F(x^{k+1})-F(\xs)\r)^2}{2(\bar \lambda)^2 \delta^2},
\eeq
where $\delta$ is the diameter of the level set of $x^0$, $\bar \lambda= \max_{i=1,\ldots,k}\lambda_{\max} (H^i)$, and $x^\ast$ is the optimal solution of \eqref{mincompfunintro}.

\end{enumerate}
\end{lem}
\begin{proof}
We start with showing the first inequality, then using this inequality we will prove the second one. 
Recall that we have
\beq \label{imro:cnvg:prflem1:eq1}
H^k (x^{k+1}-x^k) +\nabla f(x^k) + \xi^{k+1} =0,
\eeq
where $\xi^{k+1} \in \partial \l(p(x^{k+1})\r)$.
By hypothesis we have 
\beq\label{imro:cnvg:prflem1:eq2}
F(x)-F(x^{k+1}) \ge F(x)-M_{H^k}(x^{k+1},x^k),
\eeq 
and by convexity of $f(x)$ and $p(x)$ we derive
\beqa 
f(x)&\ge & f(x^k)+ \langle \nabla f(x^k), x-x^k\rangle, \label{imro:cnvg:prflem1:eq2b}\\ 
p(x)&\ge & p(x^{k+1}) + \langle \xi^{k+1} , x-x^{k+1}\rangle.
\eeqa
Summing the above inequalities, we get
\beq\label{imro:cnvg:prflem1:eq3}
F(x) \ge f(x^k) + \langle \nabla f(x^k), x-x^k\rangle +p(x^{k+1}) + \langle  \xi^{k+1} , x-x^{k+1}\rangle.
\eeq
Substituting \eqref{imro:cnvg:prflem1:eq3} in \eqref{imro:cnvg:prflem1:eq2} gives us
\beqan
F(x)-F(x^{k+1})&\ge & {f(x^k)} + \langle \nabla f(x^k), x-x^k\rangle +{p(x^{k+1})} + \langle \xi^{k+1} , x-x^{k+1}\rangle \notag \\
& &- \frac{1}{2} \|x^{k+1} -x^k\|_{H^k}^2 - \langle \nabla f(x^k), x^{k+1}-x^k\rangle -{ f(x^k)} -{p(x^{k+1})}\notag \\
&=&  -\frac{1}{2}\|x^{k+1} -x^k\|_{H^k}^2+ \langle \nabla f(x^k) +  \xi^{k+1} , x-x^{k+1}\rangle\notag \\
&=&-\frac{1}{2}\|x^{k+1} -x^k\|_{H^k}^2 +\langle H^k(x^{k} -x^{k+1}), x- x^k + (x^k-x^{k+1})\rangle\notag \\
&=&\frac{1}{2}\|x^{k+1} -x^k\|_{H^k}^2 +\langle g_{H^k}, x- x^k \rangle.\notag 
\eeqan
This concludes our proof for the first inequality. Now, let us apply inequality \eqref{imro:cnvg:lbeq1} at $x=\xs$ to get
\beqa
F(\xs)-F(x^{k+1}) &\ge& \frac{1}{2} \|x^{k+1}-x^k\|^2_{H^k} + \langle H^k(x^k-x^{k+1}) , \xs - x^k\rangle \notag \\
&=& \frac{1}{2} \l( \langle H^k(x^k - x^{k+1}), x^k- x^{k+1}+2\xs -2x^k\rangle\r) \notag \\
&=& \frac{1}{2} \l( \langle H^k(\xs- x^{k+1})-H^k(\xs-x^k), (\xs-x^{k+1})+(\xs -x^k)\rangle\r) \notag \\
&=&\frac{1}{2}\l(\|\xs-x^{k+1}\|^2_{H^k}-\|\xs-x^{k}\|^2_{H^k}\r) \notag \\
&=& \frac{1}{2} \l(\|\xs-x^{k+1}\|_{H^k}-\|\xs-x^{k}\|_{H^k}\r) \l(\|\xs-x^{k+1}\|_{H^k}+\|\xs-x^{k}\|_{H^k}\r)\notag \\
&\ge &\frac{1}{2} \l(-\|x^{k+1}-x^k\|_{H^k}\r) \l(\|\xs-x^{k+1}\|_{H^k}+\|\xs-x^{k}\|_{H^k}\r)\notag,
\eeqa
where the last line is by triangle inequality. Therefore, we have
\beq \label{prfineq1}
\|x^{k+1}-x^k\|_{H^k}\ge \frac{-2\l(F(\xs)-F(x^{k+1})\r)}{\|\xs-x^{k+1}\|_{H^k}+\|\xs-x^{k}\|_{H^k}}\ge \frac{F(x^{k+1})-F(\xs)}{\bar \lambda \delta},
\eeq
by the fact that $\|x^k-\xs\|,\|x^{k+1}-\xs\| \le \delta$ because $F$ is convex and by Lemma \ref{lem:decsq} the objective values of the iterates in \eqref{ItrSchm2} is descending, and $\|x\|_{H^k}\le \bar \lambda \|x\|$ by definition of $\bar \lambda$.

Moreover, by inequality \eqref{imro:cnvg:lbeq1} at $x^k$ we have 
\beq
F(x^k) -F(x^{k+1}) \ge \frac{1}{2}\|x^{k+1}-x^k\|_{H^k}^2.
\eeq

Applying inequality \eqref{prfineq1} concludes the result we wanted to show.\\
\end{proof}

Recall that by definition of $g_{H^k}$, and the fact that $H^k \succ 0$, we get 
\beq \label{HinvSclGrd}
{x^\ast_{H^k}}-x^k= (H^k)^{-1} g_{H^k}.
\eeq 
Furthermore, by $H^k\preceq \lambda_{\max}(H^k)\mathbf I$, where $\lambda_{\max}$ is for maximum eigenvalue, we get $(H^k)^{-1}\succeq \frac{1}{\lambda_{\max}(H^k)} \mathbf I$; hence
\beq \label{ninvHlb}
\|x\|_{(H^k)^{-1}}^2\ge \frac{1}{\lambda_{\max}(H^k)} \|x\|^2,
\eeq
and we conclude the following corollary.  
\begin{cor}\label{imro:cnvg:cor0}
Let $x^\ast_{H^k}$ be as defined in \eqref{imrocnvgscheme} for $H^k \succ 0$. Suppose $x^{k+1}= x^\ast_{H^k}$, and $F(x^{k+1})\le M_{H^k}(x^{k+1},x^k)$. Then for $\forall x\in \mathbb R^n$ we have
\beqa
F(x)&\ge & F(x^{k+1})+ \langle g_{H^k}, x-x^k \rangle +\frac{1}{2} \|g_{H^k}\|_{(H^k)^{-1}}^2 \\
&\ge & F(x^{k+1})+ \langle g_{H^k}, x-x^k \rangle +\frac{1}{2\lambda_{\max}(H^k)} \|g_{H^k}\|^2. \label{suffRed4PQN}
\eeqa
\end{cor}
\begin{proof}
Immediately follows from inequalities \eqref{imro:cnvg:lbeq1} in Lemma  \ref{imro:cnvg:LBonFlem1}, \eqref{HinvSclGrd}, and \eqref{ninvHlb}.
\end{proof}

Note that in IMRO-1D, $H^k\preceq \sigma^k \mathbf I =L \mathbf I$ and $(H^k)^{-1}\succeq \frac{1}{\sigma^k} \mathbf I= \frac{1}{L} \mathbf I$. As a result 
inequality \eqref{imro:cnvg:lbeq2} reduces to
\beq \label{imro:cnvg:lbeq2imro1d}
F(x^k)-F(x^{k+1}) \ge \frac{\l(F(x^{k+1})-F(\xs)\r)^2}{2 L^2 \delta^2};
\eeq
and we get the following inequality 
by applying Corollary \ref{imro:cnvg:cor0} at $x=x^k$,
\beq \label{suffRed4SclGrd}
F(x^{k+1})\le F(x^k)- \frac{1}{2L} \|g_{H^k}\|^2.
\eeq

Inequality \eqref{suffRed4SclGrd} clarifies more similarities between scaled gradient, $g_{H^k}$, and the notion of gradient in smooth unconstrained problems. One of the helpful properties of an algorithm for unconstrained smooth optimization is to have a sufficient reduction in the objective value at each iteration, i.e., 
\beq
f(x^{k+1})\le f(x^k)- \frac{1}{2L} \|\nabla f^k\|^2.
\eeq
Equivalently, inequality \eqref{suffRed4SclGrd} implies 
the sufficient reduction in objective value at each iteration for IMRO-1D. In general, Corollary \ref{imro:cnvg:cor0} captures the sufficient reduction condition for a proximal quasi-Newton method provided that $F(x^{k+1})\le M_{H^k}(x^{k+1},x^k)$ holds. 
The sublinear convergence of IMRO-1D is established in the following lemma.\\

\begin{lem}\label{imro:cnvg:mainlemma}
Suppose $\{\omega^k\}\rightarrow \omega^\ast$ is a decreasing sequence; $\omega^k -\omega^{k+1} \ge \frac{(\omega^{k+1} - \omega^\ast)^2}{\mu}$ for all $k$; and $\omega^1-\omega^\ast \le 4\mu$. Then for all $k$ we have 
\beq \label{wSqeq1}
\omega^k-\omega^\ast \le \frac{4\mu}{k}.
\eeq
\end{lem}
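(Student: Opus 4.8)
The statement is a standard sublinear-rate recursion lemma (the same one used in the FISTA analysis), so the plan is an induction on $k$ exploiting the quadratic-decrease inequality $\omega^k - \omega^{k+1} \ge (\omega^{k+1}-\omega^\ast)^2/\mu$. First I would set $a_k := \omega^k - \omega^\ast \ge 0$, so the hypotheses read $a_k - a_{k+1} \ge a_{k+1}^2/\mu$, the sequence $\{a_k\}$ is nonincreasing and tends to $0$, and $a_1 \le 4\mu$. The goal is $a_k \le 4\mu/k$ for all $k\ge 1$. The base case $k=1$ is exactly the hypothesis $a_1 \le 4\mu$.

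For the inductive step I would \emph{not} try to push $a_k \le 4\mu/k$ directly into $a_{k+1} \le 4\mu/(k+1)$ (that implication is false in general); instead I would work with reciprocals, which is the classical trick. From $a_k - a_{k+1} \ge a_{k+1}^2/\mu$ and $a_k \ge a_{k+1} \ge 0$, divide through by $a_k a_{k+1}$ (handling separately the trivial case $a_{k+1}=0$, where the conclusion is immediate for all larger indices) to get
\beq
\frac{1}{a_{k+1}} - \frac{1}{a_k} \ge \frac{a_{k+1}}{\mu a_k} \ge \frac{1}{\mu}\cdot\frac{a_{k+1}}{a_k}.
\eeq
The right-hand side is not yet a clean constant because of the ratio $a_{k+1}/a_k \le 1$; the cleanest route is to first weaken the quadratic inequality. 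Since $a_{k+1}\le a_k$, we have $a_k - a_{k+1} \ge a_{k+1}^2/\mu$, and I want a lower bound of the form $1/a_{k+1} - 1/a_k \ge c/\mu$ for a universal constant $c$. Multiplying $a_k - a_{k+1}\ge a_{k+1}^2/\mu$ by $1/(a_k a_{k+1})$ gives $1/a_{k+1} - 1/a_k \ge a_{k+1}/(\mu a_k)$, and here is where the hypothesis $a_1 \le 4\mu$ enters: it forces, via a secondary induction or a direct telescoping argument, that the ratio $a_{k+1}/a_k$ stays bounded below (equivalently $a_k$ does not collapse too fast), which is what lets one convert the telescoped bound $1/a_{k+1} \ge 1/a_1 + \sum (\text{increments})$ into $1/a_k \ge k/(4\mu)$.

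Concretely, the step I expect to be the main obstacle is precisely pinning down the constant: one must show $1/a_{k+1} - 1/a_k \ge 1/(4\mu)$ (or at least that the telescoped sum $\sum_{j=1}^{k-1}(1/a_{j+1}-1/a_j) \ge (k-1)/(4\mu)$, which combined with $1/a_1 \ge 1/(4\mu)$ yields $1/a_k \ge k/(4\mu)$, i.e. $a_k \le 4\mu/k$). The way I would handle the $a_{k+1}/a_k$ factor is: if $a_{k+1} \ge a_k/2$ then $1/a_{k+1} - 1/a_k \ge a_{k+1}/(\mu a_k) \ge 1/(2\mu) \ge 1/(4\mu)$ and we are done for that step; if instead $a_{k+1} < a_k/2$, then $1/a_{k+1} - 1/a_k > 1/a_{k+1} - 2/a_{k+1}\cdot\frac12 \cdot\frac{1}{1}$... — more cleanly, $1/a_{k+1} > 2/a_k$ so $1/a_{k+1} - 1/a_k > 1/a_k \ge 1/a_1 \ge 1/(4\mu)$, using monotonicity of $\{a_k\}$ and $a_1\le 4\mu$ for the last two inequalities. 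In either case $1/a_{k+1} - 1/a_k \ge 1/(4\mu)$, and summing this from $1$ to $k-1$ with the base estimate $1/a_1 \ge 1/(4\mu)$ gives $1/a_k \ge k/(4\mu)$, hence $a_k \le 4\mu/k$, completing the induction. The only genuinely delicate points are the case split on $a_{k+1}$ versus $a_k/2$ and making sure the degenerate case $a_k = 0$ (some iterate already optimal) is dispatched at the start of each step.
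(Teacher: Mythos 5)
Your argument is correct, but it is a genuinely different route from the one in the paper. The paper proceeds by induction on $k$: assuming $\omega^k-\omega^\ast\le p_k:=4\mu/k$, it plugs the decrease inequality into $\omega^{k+1}-\omega^\ast$ to obtain the quadratic inequality $\nu^2/\mu+\nu-p_k\le 0$ for $\nu=\omega^{k+1}-\omega^\ast$, solves it to get $\nu\le 2p_k/\bigl(1+\sqrt{1+4p_k/\mu}\bigr)$, and then bounds the convex function $x\mapsto 1/(1+\sqrt{1+x})$ by its secant on $[0,16]$ (for $k=1$) and on $[0,8]$ (for $k\ge 2$), the hypothesis $\omega^1-\omega^\ast\le 4\mu$ serving to confine $4p_k/\mu$ to those intervals. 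You instead run the classical reciprocal/telescoping argument on $a_k=\omega^k-\omega^\ast$: from $1/a_{k+1}-1/a_k\ge a_{k+1}/(\mu a_k)$ you extract a uniform increment $1/a_{k+1}-1/a_k\ge 1/(4\mu)$ via the case split $a_{k+1}\ge a_k/2$ (where the ratio gives $1/(2\mu)$) versus $a_{k+1}<a_k/2$ (where $1/a_{k+1}-1/a_k>1/a_k\ge 1/a_1\ge 1/(4\mu)$, using monotonicity and the hypothesis $a_1\le 4\mu$), and then telescope together with $1/a_1\ge 1/(4\mu)$ to get $1/a_k\ge k/(4\mu)$. Both proofs are complete and use the hypothesis $a_1\le 4\mu$ in an essential way; yours is more elementary (no quadratic formula, no secant estimate of a convex function) and makes transparent exactly where that hypothesis enters, while the paper's version avoids any case split on the decay of $a_k$ itself and follows the induction template it reuses elsewhere. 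Your handling of the degenerate case $a_{k+1}=0$ (stop dividing; the bound is trivial from then on) is the right way to dispose of the only place where your manipulations could break down.
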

\begin{proof}
Proof is by induction. For $k=1$ the result holds by the hypothesis. Suppose \eqref{wSqeq1} holds for $k$; and let $p_k$ denote $p(k):= \frac{4\mu}{k}$, then 
\beqa
\omega^{k+1}-\omega^\ast &=&\omega^{k}-\omega^\ast + \omega^{k+1}-\omega^k  \notag \\
&\le& \omega^{k}-\omega^\ast - \frac{(\omega^{k+1} - \omega^\ast)^2}{\mu}  \notag \\
&\le& p_k - \frac{(\omega^{k+1} - \omega^\ast)^2}{\mu}.  \notag 
\eeqa
Let $\nu= \omega^{k+1} -\omega^\ast$. Then the above inequality is 
\beq
\frac{\nu^2}{\mu} + \nu - p_k \le 0, 
\eeq
which has a nonnegative solution given by 
\beq
\nu\le \frac{-\mu+\sqrt{\mu^2+4p_k\mu}}{2} = \frac{2p_k}{1+\sqrt{1+ \frac{4p_k}{\mu}}}.
\eeq
Note that function $f(x)=\frac{1}{1+\sqrt{1+x}}$ is convex; thus on any interval $[0,\alpha]$, it is bounded above by its secant interpolant. We now consider two separate cases; when $k = 1$ to show that lemma holds for $k + 1 = 2$,
and when $k\ge 2$ to show that the lemma holds for $k+1\ge 3$. For $k=1$ we have $p_1\le 4\mu$, so $\frac{4p_1}{\mu}\le 16$, therefore
\beq
\nu\le 2p_1 \l( \frac{1}{2} + \frac{\frac{1}{1+\sqrt{17}}- \frac{1}{2}}{16}\frac{4p_1}{\mu}\r)\le
\frac{4\mu}{2}.
\eeq
For $k\ge 2$, $p_k\le \frac{4\mu}{k} \le 2\mu$, so $\frac{4p_k}{\mu}\le 8$, hence
\beqa
\nu&\le& 2p_k \l( \frac{1}{2} + \frac{\frac{1}{1+\sqrt{9}}- \frac{1}{2}}{8}\frac{4p_k}{\mu}\r) \notag \\
&=&4\mu \l( \frac{1}{k} + 4 \l(\frac{1}{4}- \frac{1}{2}\r) \frac{1}{k^2} \r)\le \frac{4\mu}{k+1},
\eeqa
where the last inequality follows from the fact that $\frac{1}{k} - \frac{1}{k^2} \le \frac{1}{k+1}$.
\end{proof}\\

\begin{thm}\label{cnvgThrm}
Let $x^\ast_{H^k}$ be as defined in \eqref{imrocnvgscheme}. Suppose iterative scheme of 
$$x^{k+1}= x^\ast_{H^k}$$
is applied to problem \eqref{mincompfunintro}, and 
$$F(x^{k+1})\le M_{H^k}(x^{k+1},x^k).$$ Let $\delta$ be the diameter of the level set of $x^0$ and $\bar \lambda= \max_{i=1,\ldots,k}\lambda_{\max} (H^i)$  as in Lemma \ref{imro:cnvg:LBonFlem1}. Then 
$$ F(x^k)-  F(x^\ast) \le \frac{4\mu}{k}, $$
where $\mu=\max \l(2\bar \lambda^2\delta^2, \frac{ F(x^1)- F(x^\ast)}{4} \r)$.
\end{thm}

\begin{proof}
By Lemma \ref{imro:cnvg:LBonFlem1} we have 
$$
F(x^k)- F(x^{k+1}) \ge \frac{\l( F(x^{k+1})- F(\xs)\r)^2}{2(\bar \lambda)^2 \delta^2}
\ge \frac{\l( F(x^{k+1})- F(\xs)\r)^2}{\mu}. 
$$ 
Letting $\omega^k = F(x^k)$, the result of this theorem is a direct implication of Lemma \ref{imro:cnvg:mainlemma}.
\end{proof}\\
\begin{cor}
Let $x^\ast_{H^k}$ be as defined in \eqref{imrocnvgscheme}; and the iterative scheme of 
$$x^{k+1}= x^\ast_{H^k}$$
is applied to problem \eqref{imro:BPDNdef} for $\{H^k\}$ generated by IMRO-1D. Then 
$$\tilde F(x^k)- \tilde F(x^\ast) \le \frac{4\mu}{k}, $$
where $\mu=\max \l(2L^2\delta^2, \frac{\tilde F(x^1)- \tilde F(x^\ast)}{4} \r)$.
\end{cor}
\begin{proof}
Recall that in IMRO-1D, $ A^tA \preceq H^k$, thus $\tilde F(x^{k+1}) \le M_{H^k}(x^{k+1},x^k)$; moreover, $\lambda_{\max} (H^k) \le \sigma^k = L$. Our result is, therefore, concluded by Theorem  \ref{cnvgThrm}.
\end{proof}

\section{FIMRO - Accelerated Variant of IMRO} \label{sec:fimro}
In this section, we discuss how we may apply the accelerated technique of Nesterov \cite[Chapter 2]{nesterov:intro2CO} to IMRO-1D.   We assume in this section that
$\sigma^k\ge L=\Vert A\Vert^2$.

As in Nesterov's method, we have two sequences $\{y^k\}$ and $\{x^k\}$ in this section. The model $M_{H^k}(x,y^k)$ is built using $y^k$, while its solution generates $\{x^k\}$. In other words we have the following:
\beqa
M_{H^k}^\ast(y^k) &:=& \min \ \  M_{H^k}(x,y^k),\\
x_{H^k}^\ast(y^k) &:=& \arg\min\ \  M_{H^k}(x,y^k),\\
g_{H^k}(y^k)  & := & H^k( y^k - {x^\ast_{H^k}(y^k)}) \ \ \text{$\equiv g_{H^k}$ }.
\eeqa

\begin{dfn}\cite[Definition 2.2.1]{nesterov:intro2CO}
A pair of sequences $\{ \phi^k(x)\}$ and $\{\lambda^k\}$,\ $\lambda^k\ge0$ is called an estimate sequence of function $f(x)$ if $\lambda^k \rightarrow 0$ and for any $x\in \mathbb R^n$ and all $k\ge 0$ we  have 
$$\phi^k(x)\le (1-\lambda^k) f(x) + \lambda^k \phi^0(x).$$
\end{dfn}

The following two lemmas which are analogous to \cite[Lemmas 2.2.2 and 2.2.3]{nesterov:intro2CO} summarize how we can construct an estimate sequence.
\begin{lem}
Suppose $\{y^k\}$ is an arbitrary sequence, $\{\alpha^k\}$ is a sequence such that $\alpha^k\in (0,1)$ and $\sum_{k=0}^{\infty}\alpha^k=\infty$, and $\lambda^0=1$. Moreover assume that 
\beqan 
F(x^\ast_{H^k}{(y^k)})\le M_{H^k}(x^\ast_{H^k}{(y^k)}, y^k).
\eeqan 
Then the pair of sequences $\{\lambda^k\}$ and $\{\phi^k(x)\}$ generated as 
\beqan
\lambda^{k+1}&=&(1-\alpha^k)\lambda^k,\\
\phi^{k+1}(x)&=& (1-\alpha^k) \phi^k(x) + \alpha^k \l[ F(x_{H^k}^\ast(y^k))+ \langle g_{H^k}, x-y^k \rangle +\frac{1}{2\sigma^k} \|g_{H^k}\|^2\r]
\eeqan
is an estimate sequence for $F(x)$.
\end{lem}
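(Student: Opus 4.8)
The plan is to verify directly the two clauses in the definition of an estimate sequence: that $\lambda^k\to 0$, and that the pointwise bound $\phi^k(x)\le(1-\lambda^k)F(x)+\lambda^k\phi^0(x)$ holds for all $x\in\mathbb R^n$ and all $k\ge 0$ (with $F$ playing the role of the objective, since $\psi$ and the minorant below are built from the composite function). The first clause depends only on the recursion $\lambda^{k+1}=(1-\alpha^k)\lambda^k$ together with the divergence of $\sum_k\alpha^k$; the second I would prove by induction on $k$.

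For $\lambda^k\to 0$: iterating the recursion from $\lambda^0=1$ gives $\lambda^k=\prod_{i=0}^{k-1}(1-\alpha^i)$, a product of numbers in $(0,1)$. Taking logarithms and using $\ln(1-t)\le -t$ on $[0,1)$ yields $\ln\lambda^k\le-\sum_{i=0}^{k-1}\alpha^i\to-\infty$ because $\sum_k\alpha^k=\infty$, hence $\lambda^k\to 0$.

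For the inductive bound: the base case $k=0$ is trivial since $\lambda^0=1$ forces the right-hand side to be exactly $\phi^0(x)$. For the step, assume $\phi^k(x)\le(1-\lambda^k)F(x)+\lambda^k\phi^0(x)$. The key input is that $\psi(x)=F(x_H^\ast(y^k))+\langle g_H^k,x-y^k\rangle+\frac{1}{2\sigma}\|g_H^k\|^2$ satisfies $\psi(x)\le F(x)$ for all $x$; this is Corollary \ref{imro:cnvg:cor0} read with the iterate there named $x^k$ replaced by $y^k$ (so that $g_H^k=H(y^k-x_H^\ast(y^k))$, exactly as in this section) and $x^{k+1}$ replaced by $x_H^\ast(y^k)$, whose hypothesis $F(x_H^\ast(y^k))\le M_H^k(x_H^\ast(y^k))$ is precisely our standing assumption. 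Then, using $\alpha^k\in(0,1)$ so that $\alpha^k$ and $1-\alpha^k$ are both positive,
$$\phi^{k+1}(x)=(1-\alpha^k)\phi^k(x)+\alpha^k\psi(x)\le(1-\alpha^k)\bigl[(1-\lambda^k)F(x)+\lambda^k\phi^0(x)\bigr]+\alpha^k F(x),$$
and collecting the coefficient of $F(x)$ via the elementary identities $(1-\alpha^k)(1-\lambda^k)+\alpha^k=1-(1-\alpha^k)\lambda^k$ and $(1-\alpha^k)\lambda^k=\lambda^{k+1}$ gives $\phi^{k+1}(x)\le(1-\lambda^{k+1})F(x)+\lambda^{k+1}\phi^0(x)$, closing the induction.

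I do not anticipate a genuine obstacle: the argument is a transcription of Nesterov's Lemma 2.2.2 in which the gradient inequality for a smooth objective is replaced by the linear-plus-constant minorant of Corollary \ref{imro:cnvg:cor0}. The only points needing care are (i) checking that the re-indexed corollary applies verbatim, i.e.\ that the model $M_H(\cdot,y^k)$ of this section takes the place of $M_H^k$ and $x_H^\ast(y^k)$ the place of $x^{k+1}$, so that its conclusion reads off as $\psi(x)\le F(x)$; and (ii) keeping the inequality directions straight under the sign conditions on $\alpha^k$. It is worth noting in passing that for IMRO-1D the hypothesis $F(x_H^\ast(y^k))\le M_H^k(x_H^\ast(y^k))$ holds automatically because $H\succeq A^tA$, so the lemma is not vacuous in the setting of interest.
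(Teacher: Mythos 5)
Your proposal is correct and follows essentially the same route as the paper: an induction in which the pointwise bound $\psi(x)\le F(x)$ from Corollary \ref{imro:cnvg:cor0} (applied at $y^k$ with minimizer $x_H^\ast(y^k)$) is combined with the recursion $\lambda^{k+1}=(1-\alpha^k)\lambda^k$ to close the estimate-sequence inequality. Your additional verification that $\lambda^k\to 0$ via $\ln(1-t)\le -t$ and $\sum_k\alpha^k=\infty$ is a standard step the paper leaves implicit, so there is no substantive difference.
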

\begin{proof}
Let $\psi(x) := F(x_{H^k}^\ast(y^k))+ \langle g_{H^k}, x-y^k \rangle +\frac{1}{2\sigma^k} \|g_{H^k}\|^2$. 
Note that by Corollary \ref{imro:cnvg:cor0}, $F(x)\ge \psi(x)$ for $\forall x \in \mathbb R^n$. Our proof is by induction. The base case holds true for $k=0$. Suppose it holds true for $k$, then for $k+1$ we have
\beqan
\phi^{k+1}(x)&=&(1-\alpha^k) \phi^k(x) + \alpha^k \psi(x) \le (1- \alpha^k) \phi^k(x) + \alpha^k F(x)\notag \\
&=& (1-(1-\alpha^k)\lambda^k) F(x) + (1-\alpha^k)\l(\phi^k(x) - (1-\lambda^k) F(x) \r)\notag \\
&\le &(1-(1-\alpha^k)\lambda^k) F(x) + (1-\alpha^k)\lambda^k \phi^0(x)\notag \\
&=& (1-\lambda^{k+1}) F(x) + \lambda^{k+1} \phi^0(x).\notag
\eeqan
\end{proof}

In the following lemma we show how we may write $\phi^{k+1}(x)$ in closed form.

\begin{lem}
Suppose $\phi^0(x)= F(x^0)+\frac{\gamma^0}{2}\|x-z^0\|^2$. Then $\phi^{k+1}(x)$ generated by the recursive formulation of the previous lemma is 
$$\phi^{k+1}(x)= \bar \phi^{k+1}+\frac{\gamma^{k+1}}{2}\|x-z^{k+1}\|^2,$$ 
where 
\beqa
\gamma^{k+1}&=& (1-\alpha^k)\gamma^k, \notag \\
z^{k+1}&=&z^k-\frac{\alpha^k}{\gamma^{k+1}} g_{H^k} \notag, \\
{\bar\phi^{k+1}}&=&(1-\alpha^k){\bar\phi^k}+ \alpha^k F(x_{H^k}^\ast(y^k))+\l( \frac{\alpha^k}{2\sigma^k}-\frac{(\alpha^k)^2}{2\gamma^{k+1}} \r)\|g_{H^k}\|^2 +\alpha^k \langle g_{H^k}, z^k -y^k \rangle.\notag
\eeqa
\end{lem}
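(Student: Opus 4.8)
The plan is to prove the statement by induction on $k$, exactly paralleling the argument for \cite[Lemma 2.2.3]{nesterov:intro2CO}. The base case $k=0$ is immediate: the hypothesis $\phi^0(x)=F(x^0)+\frac{\gamma^0}{2}\|x-z^0\|^2$ is already of the claimed form, with $\bar\phi^0=F(x^0)$. For the inductive step I would assume $\phi^k(x)=\bar\phi^k+\frac{\gamma^k}{2}\|x-z^k\|^2$ and substitute into the recursion of the previous lemma, $\phi^{k+1}(x)=(1-\alpha^k)\phi^k(x)+\alpha^k\psi(x)$, where $\psi(x)=F(x_H^\ast(y^k))+\langle g_H^k,x-y^k\rangle+\frac{1}{2\sigma}\|g_H^k\|^2$. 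Since $\psi$ is affine in $x$, the only quadratic contribution to $\phi^{k+1}$ comes from $(1-\alpha^k)\frac{\gamma^k}{2}\|x-z^k\|^2$; hence $\phi^{k+1}$ is again a quadratic in $x$ with Hessian $(1-\alpha^k)\gamma^k\mathbf I$, which both reads off $\gamma^{k+1}=(1-\alpha^k)\gamma^k$ and confirms that $\phi^{k+1}$ has the claimed ``constant plus scaled squared distance'' shape.

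Next I would identify $z^{k+1}$ as the unique minimizer of $\phi^{k+1}$. Differentiating, $\nabla\phi^{k+1}(x)=(1-\alpha^k)\gamma^k(x-z^k)+\alpha^k g_H^k=\gamma^{k+1}(x-z^k)+\alpha^k g_H^k$, and setting this to zero gives $z^{k+1}=z^k-\frac{\alpha^k}{\gamma^{k+1}}g_H^k$, as claimed. Then $\bar\phi^{k+1}$ is just the minimum value $\phi^{k+1}(z^{k+1})$, so I would plug $x=z^{k+1}$ into $\phi^{k+1}(x)=(1-\alpha^k)\bar\phi^k+(1-\alpha^k)\frac{\gamma^k}{2}\|x-z^k\|^2+\alpha^k F(x_H^\ast(y^k))+\alpha^k\langle g_H^k,x-y^k\rangle+\frac{\alpha^k}{2\sigma}\|g_H^k\|^2$ and use the identity $z^{k+1}-z^k=-\frac{\alpha^k}{\gamma^{k+1}}g_H^k$.

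The remaining work is pure bookkeeping of two terms. The quadratic term becomes $(1-\alpha^k)\frac{\gamma^k}{2}\cdot\frac{(\alpha^k)^2}{(\gamma^{k+1})^2}\|g_H^k\|^2=\frac{(\alpha^k)^2}{2\gamma^{k+1}}\|g_H^k\|^2$ (using $(1-\alpha^k)\gamma^k=\gamma^{k+1}$), and the inner-product term splits as $\alpha^k\langle g_H^k,z^k-y^k\rangle+\alpha^k\langle g_H^k,z^{k+1}-z^k\rangle=\alpha^k\langle g_H^k,z^k-y^k\rangle-\frac{(\alpha^k)^2}{\gamma^{k+1}}\|g_H^k\|^2$. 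Collecting all $\|g_H^k\|^2$ coefficients gives $\frac{(\alpha^k)^2}{2\gamma^{k+1}}-\frac{(\alpha^k)^2}{\gamma^{k+1}}+\frac{\alpha^k}{2\sigma}=\frac{\alpha^k}{2\sigma}-\frac{(\alpha^k)^2}{2\gamma^{k+1}}$, producing exactly the stated expression for $\bar\phi^{k+1}$. I do not expect a genuine obstacle: the only subtlety is the sign and the factor $1/2$ created by ``completing the square'' — that substituting the minimizer converts the $+\frac{(\alpha^k)^2}{2\gamma^{k+1}}$ into a net $-\frac{(\alpha^k)^2}{2\gamma^{k+1}}$ — together with keeping the $F(x^0)$ term bundled into $\bar\phi^0$ so the induction closes cleanly.
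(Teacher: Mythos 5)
Your proof is correct and follows essentially the same route as the paper: induction, reading off $\gamma^{k+1}=(1-\alpha^k)\gamma^k$ from the quadratic (affine $\psi$) structure, obtaining $z^{k+1}$ from $\nabla\phi^{k+1}=0$, and then extracting $\bar\phi^{k+1}$ by evaluating the two expressions for $\phi^{k+1}$. The only (cosmetic) difference is that you evaluate at the minimizer $x=z^{k+1}$, so $\bar\phi^{k+1}$ appears directly as the minimum value, whereas the paper equates the two formulations at $x=y^k$; both yield the identical bookkeeping and the stated formula.
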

\begin{proof}
The proof is by induction. The base case for $k=0$ holds. Suppose for $k$ we have 
$$\phi^k(x)= {\bar\phi^k} + \frac{\gamma^k}{2} \|x-z^k\|^2,$$
then by the previous lemma we get
\beqan
\phi^{k+1}(x)&=&(1-\alpha^k)\l[{\bar \phi^k} + \frac{\gamma^k}{2} \|x-z^k\|^2\r]\\
&&+ \alpha^k\l[F(x_{H^k}^\ast(y^k))+ \langle g_{H^k}, x-y^k \rangle +\frac{1}{2\sigma^k} \|g_{H^k}\|^2 \r].
\eeqan
Using the fact that $\phi$ is a quadratic function we get $$\nabla^2 \phi^{k+1}(x)=\gamma^{k+1}= (1-\alpha^k)\gamma^k .$$
By $\nabla \phi^{k+1}(x) =0 $, we get the minimizer of $\phi^{k+1}$ which is $\frac{1}{\gamma^{k+1}}z^{k+1}$, so 
\beqan
&\nabla \phi^{k+1}(x) = (1-\alpha^k) \gamma^k(x-z^k) + \alpha^k g_{H^k}, \\
\Rightarrow & x= z^{k+1} = \argmin \phi^{k+1}(x) =  z^k-\frac{\alpha^k}{\gamma^{k+1}} g_{H^k} .
\eeqan
To find ${\bar\phi^{k+1}}$ we set equal the $\phi^{k+1}(y^k)$ in both formulations of $\phi^{k+1}$. We, therefore, have
\beqan
{\bar\phi^{k+1}}+ \frac{\gamma^{k+1}}{2} \|y^k &-& z^{k+1}\|^2 ={\bar\phi^{k+1}} + \frac{\gamma^{k+1}}{2} \|z^k-y^k\|^2 -\alpha^k\langle g_{H^k}, z^k-y^k \rangle+\frac{(\alpha^k)^2}{2\gamma^{k+1}}\|g_{H^k}\|^2 \notag \\
&=& (1-\alpha^k){\bar\phi^k} + \frac{(1-\alpha^k)\gamma^k}{2} \|y^k-z^k\|^2+ \alpha^k F(x_{H^k}^\ast(y^k))+ \frac{\alpha^k}{2\sigma^k} \|g_{H^k}\|^2, 
\eeqan
i.e.,
\beq \label{eq0b}
\bar\phi^{k+1}=(1-\alpha^k){\bar\phi^k}+ \alpha^k F(x_{H^k}^\ast(y^k))+\l( \frac{\alpha^k}{2\sigma^k}-\frac{(\alpha^k)^2}{2\gamma^{k+1}} \r)\|g_{H^k}\|^2 + \alpha^k \langle g_{H^k}, z^k -y^k \rangle.
\eeq
\end{proof} 

We would like to construct $y^k$ such that ${\bar\phi^{k+1}} \ge F(x_{H^k}^\ast(y^k))=F(x^{k+1})$. The benefit of this condition will be clear in Theorem \ref{fimro:thm1}. Note that for $k=0$, ${\bar\phi^0} = F(x^0)$ and the condition holds. Let $x^{k+1}=x_{H^k}^\ast(y^k)$, and suppose the required condition is satisfied for $k$, i.e. ${\bar\phi^k} \ge F(x^k)$. Using Corollary \ref{imro:cnvg:cor0} at $x^k=y^k$ and $x=x^k$ we derive
\beq\label{eq0}
{\bar\phi^k}\ge F(x^k)\ge F(x_{H^k}^\ast(y^k)) + \langle g_{H^k}, x^k-y^k\rangle + \frac{1}{2\sigma^k}\|g_{H^k}\|^2.
\eeq
Substituting inequality \eqref{eq0} in the equation \eqref{eq0b} we get
\beqa
{\bar\phi^{k+1}}  \ge  F(x_{H^k}^\ast(y^k))&+&\l( \frac{1}{2\sigma^k}-\frac{(\alpha^k)^2}{2\gamma^{k+1}} \r)\|g_{H^k}\|^2 \notag \\
& +& \langle g_{H^k}, \alpha^k(z^k -y^k)+(1-\alpha^k)(x^k-y^k) \rangle.
\eeqa
To make sure that ${\bar\phi^{k+1}} \ge  F(x_{H^k}^\ast(y^k))$, we need to set 
\beqa
\sigma^k (\alpha^k)^2& =& (1-\alpha^k) \gamma^k \ \equiv \gamma^{k+1}, \label{fimro:gmul0eq1}\\
y^k&=&\alpha^k z^k + (1-\alpha^k) x^k \label{fimro:linterm0eq1}.
\eeqa
Equation \eqref{fimro:gmul0eq1} ensures that the coefficient of $\|g_{H^k}\|^2$ is zero, and \eqref{fimro:linterm0eq1} makes the linear term vanish.  The proposed accelerated scheme is summarized as follows.

\begin{alg}\label{fimro:alg}
\vspace{-0.11 in}
\begin{tabbing}
++\=+++\=+++\=+++\=+++\=\kill\\
\>Let $z^0=x^0$ be arbitrary initial points, $\gamma^0\ge L$ \\
\> {\bf for} i=0,1,\ldots\\
\>\> Compute $\alpha^k$ as $\sigma^k(\alpha^k)^2=(1-\alpha^k)\gamma^k$\\
\>\>\> Let $\gamma^{k+1} = \sigma^k(\alpha^k)^2$\\
\>\> Let $\displaystyle{y^k= \alpha^k z^k + (1-\alpha^k) x^k}$\\
\>\>\> Compute $f(y^k)$ and $\nabla f(y^k)$\\ 
\>\> Find $\sigma^k$ and $u^k$:\\
\>\>\> equations \eqref{imro1dsig} and \eqref{imro1du} (as in IMRO-1D) \\
\>\> Find $x^{k+1}=x^\ast_{H^k}(y^k)$ using algorithm \ref{algfindx}\\
\>\>\>(Note that for this choice of $x$ we get \\
\>\>\>$F(x^{k+1})\le F(y^k)- \frac{1}{2\sigma^k} \|g_{H^k}\|^2\le F(y^k)- \frac{1}{2L} \|g_{H^k}\|^2 $)\\ 
\>\> Let $z^{k+1}=z^k-\frac{\alpha^k}{\gamma^{k+1}} g_{H^k}$
\end{tabbing}
\end{alg}

The following theorem from \cite[Lemma 2.2.1]{nesterov:intro2CO} reveals the importance of condition ${\bar\phi^{k+1}} \ge  F(x_{H^k}^\ast(y^k))$. 

\begin{thm}\label{fimro:thm1}
Suppose $F(x^k)\le{\bar\phi^{k}} $ holds true for a sequence $\{x^k\}$. Then 
$$ F(x^k) - F(\xs) \le \lambda^k \l[ F(x^0)+\frac{\gamma^0}{2}\|x^0-\xs\|^2 - F(\xs)\r].$$
\end{thm}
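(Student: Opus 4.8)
The plan is to combine two ingredients that have already been assembled in this section: the estimate-sequence inequality from the first construction lemma, and the closed-form expression for $\phi^{k}$ from the second construction lemma, which exhibits $\bar\phi^k$ as the \emph{minimum value} of the quadratic $\phi^k$. This is exactly Nesterov's argument for \cite[Lemma 2.2.1]{nesterov:intro2CO}, transported to the composite setting, so the proof should be short.

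First I would note that, since $\phi^k(x)=\bar\phi^k+\frac{\gamma^k}{2}\|x-z^k\|^2$ with $\gamma^k>0$ (this follows from $\gamma^0\ge L>0$ together with the recursion $\gamma^{k+1}=(1-\alpha^k)\gamma^k$ and $\alpha^k\in(0,1)$), we have $\bar\phi^k=\min_x\phi^k(x)$, and in particular $\bar\phi^k\le\phi^k(\xs)$. Second, the estimate-sequence property established earlier gives, for every $x$,
\[
\phi^k(x)\le(1-\lambda^k)F(x)+\lambda^k\phi^0(x),
\]
so evaluating at $x=\xs$ yields $\phi^k(\xs)\le(1-\lambda^k)F(\xs)+\lambda^k\phi^0(\xs)$.

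Chaining these with the hypothesis $F(x^k)\le\bar\phi^k$ produces
\[
F(x^k)\le\bar\phi^k\le\phi^k(\xs)\le(1-\lambda^k)F(\xs)+\lambda^k\phi^0(\xs),
\]
and subtracting $F(\xs)$ from both sides, then substituting $\phi^0(\xs)=F(x^0)+\frac{\gamma^0}{2}\|x^0-\xs\|^2$, gives precisely the claimed bound
\[
F(x^k)-F(\xs)\le\lambda^k\Bigl[F(x^0)+\frac{\gamma^0}{2}\|x^0-\xs\|^2-F(\xs)\Bigr].
\]

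As for the difficulty: the proof itself is a two-line chain of inequalities, so the real work sits in everything preceding the statement — namely verifying that the hypothesis $F(x^k)\le\bar\phi^k$ can actually be maintained, which is exactly what the choices $\sigma(\alpha^k)^2=(1-\alpha^k)\gamma^k$ and $y^k=\alpha^k z^k+(1-\alpha^k)x^k$ were engineered to guarantee, and keeping track that $\gamma^k$ stays positive so that $\bar\phi^k$ is genuinely $\min_x\phi^k(x)$. One point worth flagging is the mild notational clash between the definition of ``estimate sequence'' (phrased for $f$) and its use here (for $F$); the construction lemma in fact delivers $\phi^k(x)\le(1-\lambda^k)F(x)+\lambda^k\phi^0(x)$, which is the form used above.
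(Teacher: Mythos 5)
Your proposal is correct and follows essentially the same argument as the paper: both exploit that $\bar\phi^k$ is the minimum of the quadratic $\phi^k$ together with the estimate-sequence inequality evaluated at $\xs$, the only cosmetic difference being that you pass through $\phi^k(\xs)$ directly while the paper bounds $\bar\phi^k$ by the minimum of $(1-\lambda^k)F(x)+\lambda^k\phi^0(x)$ before specializing to $\xs$. Your side remarks (positivity of $\gamma^k$ so that $\bar\phi^k=\min_x\phi^k(x)$, and the $f$-versus-$F$ wording in the estimate-sequence definition) are accurate and consistent with the paper's development.
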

\begin{proof}
By definition of an estimate sequence we get 
$$F(x^k)\le {\bar\phi^{k}} \le \min_{x} (1-\lambda^k) F(x) + \lambda^k \phi^0(x) \le (1-\lambda^k) F(\xs) + \lambda^k \phi^0(\xs),$$
so
$$ F(x^k) - F(\xs) \le \lambda^k [ \phi^0(\xs) - F(\xs)]=\lambda^k \l[ F(x^0)+\frac{\gamma^0}{2}\|x^0-\xs\|^2 - F(\xs)\r].$$
\end{proof}

The beauty of the above theorem lies in the fact that the convergence of $\{x^k\}$ follows the convergence rate of $\{\lambda^k\}$. It remains to find the convergence rate of $\{\lambda^k\}$. 
\begin{lem}\cite[Lemma 2.2.4]{nesterov:intro2CO} \label{fimro:Bolam}
Suppose $\gamma^0\ge L$. Then 
$$\lambda^k \le \frac{4\sigma^k}{(2\sqrt{\sigma^k}+ k \sqrt{\gamma^0})^2} .
$$
\end{lem}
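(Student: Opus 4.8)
The plan is to follow the standard Nesterov estimate-sequence argument from \cite[Lemma 2.2.4]{nesterov:intro2CO}, adapted to the present notation in which the model is built with the constant $\sigma$ (playing the role of the Lipschitz constant in the acceleration) while $\gamma^0\ge L$ is the initial curvature of $\phi^0$. The quantity to control is $\lambda^k$, which by construction satisfies $\lambda^0=1$ and $\lambda^{k+1}=(1-\alpha^k)\lambda^k$, where $\alpha^k\in(0,1)$ is the positive root of $\sigma(\alpha^k)^2=(1-\alpha^k)\gamma^k$ and $\gamma^{k+1}=(1-\alpha^k)\gamma^k=\sigma(\alpha^k)^2$. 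The first step is to observe that $\gamma^{k+1}=\lambda^{k+1}\gamma^0$; this follows immediately by induction from $\gamma^{k+1}=(1-\alpha^k)\gamma^k$ and $\lambda^{k+1}=(1-\alpha^k)\lambda^k$, together with $\lambda^0=1$. Hence the defining relation $\sigma(\alpha^k)^2=\gamma^{k+1}=\lambda^{k+1}\gamma^0$ gives $\alpha^k=\sqrt{\lambda^{k+1}\gamma^0/\sigma}$.

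Next I would convert the recursion $\lambda^{k+1}=(1-\alpha^k)\lambda^k$ into a usable inequality by introducing $a_k=1/\sqrt{\lambda^k}$. From $\lambda^{k+1}=\lambda^k-\alpha^k\lambda^k$ and $\alpha^k=\sqrt{\lambda^{k+1}\gamma^0/\sigma}$ one gets $\lambda^k-\lambda^{k+1}=\lambda^k\sqrt{\lambda^{k+1}\gamma^0/\sigma}$. Dividing through by $\lambda^k\sqrt{\lambda^{k+1}}$ (and using $\lambda^{k+1}\le\lambda^k$ to bound one factor) yields a telescoping lower bound of the form
\[
\frac{1}{\sqrt{\lambda^{k+1}}}-\frac{1}{\sqrt{\lambda^k}}\ \ge\ \frac{1}{2}\sqrt{\frac{\gamma^0}{\sigma}},
\]
which is precisely the step in Nesterov's proof; the factor $\tfrac12$ comes from $\sqrt{\lambda^k}+\sqrt{\lambda^{k+1}}\le 2\sqrt{\lambda^k}$ after rationalizing the difference $a_{k+1}-a_k$. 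Summing this from $0$ to $k-1$ and using $\lambda^0=1$ gives $1/\sqrt{\lambda^k}\ge 1+\tfrac{k}{2}\sqrt{\gamma^0/\sigma}$, i.e. $\lambda^k\le \bigl(1+\tfrac{k}{2}\sqrt{\gamma^0/\sigma}\bigr)^{-2}$. Multiplying numerator and denominator by $4\sigma$ rewrites this as $\lambda^k\le 4\sigma/(2\sqrt{\sigma}+k\sqrt{\gamma^0})^2$, which is the claimed bound.

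The main obstacle is the rationalization step that produces the clean telescoping inequality: one must be careful about which of $\lambda^k$ or $\lambda^{k+1}$ to use when bounding the cross term $\sqrt{\lambda^k}+\sqrt{\lambda^{k+1}}$, and one has to confirm that $\alpha^k\in(0,1)$ so that $\{\lambda^k\}$ is genuinely decreasing and positive (this is where $\gamma^0\ge L$, combined with $\sigma\ge L$, enters — it guarantees the quadratic $\sigma(\alpha)^2+\gamma^k\alpha-\gamma^k=0$ has its relevant root in $(0,1)$, keeping the induction well-posed). Everything else is the bookkeeping already carried out in the preceding lemmas, so I would cite \cite[Lemma 2.2.4]{nesterov:intro2CO} for the routine parts and present only the identity $\gamma^k=\lambda^k\gamma^0$ and the telescoping estimate in detail.
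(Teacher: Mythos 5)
Your proposal is correct and follows essentially the same route as the paper: establish $\gamma^k=\lambda^k\gamma^0$ (the paper states this as the inequality $\gamma^0\lambda^k\le\gamma^k$, proved by the same induction), deduce $\alpha^k\ge\sqrt{\gamma^0\lambda^{k+1}/\sigma}$, and then telescope the increments of $\tau^k=1/\sqrt{\lambda^k}$ using the bound $\sqrt{\lambda^k}+\sqrt{\lambda^{k+1}}\le 2\sqrt{\lambda^k}$ to get $\tau^{k+1}-\tau^k\ge\tfrac12\sqrt{\gamma^0/\sigma}$, exactly as in the paper's proof. The only quibble is your side remark attributing the existence of a root $\alpha^k\in(0,1)$ to $\gamma^0\ge L$; positivity of $\sigma$ and $\gamma^k$ already gives this, and the hypothesis $\gamma^0\ge L$ is needed elsewhere, but this does not affect the validity of your argument.
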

\begin{proof}
Inductively we show that $\gamma^0 \lambda^k \le \gamma^k$. It holds by our assumption that $\lambda^0=1$ for $0$. Suppose it holds for $k$, then for $k+1$ we get
$$\gamma^0 \lambda^{k+1}= \gamma^0 \lambda^k (1-\alpha^k)\le \gamma^k (1-\alpha^k) = \gamma^{k+1}=\sigma^k (\alpha^k)^2 \ \ \Rightarrow \alpha^k\ge \sqrt{\frac{\gamma^0\lambda^{k+1}}{\sigma^k}}.$$

Let $\tau^k= \frac{1}{\sqrt{\lambda^k}}$, be an increasing sequence; then we have 
\beqan
\tau^{k+1}-\tau^k & =& \frac{\sqrt{\lambda^k}-\sqrt{\lambda^{k+1}}}{\sqrt{\lambda^k \lambda^{k+1}}}=\frac{\lambda^k-\lambda^{k+1}}{\sqrt{\lambda^k \lambda^{k+1}}\l(\sqrt{\lambda^k}+\sqrt{\lambda^{k+1}}\r)}\\
&\ge &\frac{\lambda^k-\lambda^{k+1}}{2\lambda^k\sqrt{\lambda^{k+1}}}=\frac{\lambda^k - (1-\alpha^k)\lambda^k}{2\lambda^k\sqrt{\lambda^{k+1}}}=\frac{\alpha^k}{2\sqrt{\lambda^{k+1}}} \ge \frac{1}{2} \sqrt{\frac{\gamma^0}{\sigma^k}}.
\eeqan
Hence, 
$$\tau^k\ge 1+ \frac{k}{2} \sqrt{\frac{\gamma^0}{\sigma^k}} \ \Rightarrow  \ \lambda\le \frac{4\sigma^k}{(2\sqrt{\sigma^k}+ k \sqrt{\gamma^0})^2}.$$
\end{proof}

The corollary below immediately follows from Theorem \ref{fimro:thm1}, Lemma \ref{fimro:Bolam}, and the fact that $F(x^0)-F(\xs) \le \frac{L}{2}\|x^0-\xs\|^2$.
\begin{cor}Suppose $\gamma^0\ge L$. The generated sequence by Algorithm \ref{fimro:alg} satisfies
$$ F(x^k)-F(\xs)\le \frac{\gamma^0+L}{2} \l ( \frac{4\sigma^k}{(2\sqrt{\sigma^k}+ k \sqrt{\gamma^0})^2} \r)\|x^0-\xs\|^2.$$
Note that in IMRO-1D, $\sigma^k\ge L$. Under the assumption that $\sigma^k=\gamma^0=L$, we have 
$$ F(x^k)-F(\xs)\le   \frac{4L}{(2+ k)^2} \|x^0-\xs\|^2.$$
\end{cor}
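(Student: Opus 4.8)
The plan is to chain together the three ingredients named in the statement: Theorem~\ref{fimro:thm1}, Lemma~\ref{fimro:Bolam}, and the elementary bound $F(x^0)-F(\xs)\le\frac{L}{2}\|x^0-\xs\|^2$. Before that chain can be invoked, however, one has to check that the hypothesis of Theorem~\ref{fimro:thm1} --- namely $F(x^k)\le\bar\phi^k$ for every $k$ --- is actually in force along the sequence produced by Algorithm~\ref{fimro:alg}. This is precisely the inductive argument sketched informally just before the algorithm was stated, and I would make it rigorous: the base case is $\bar\phi^0=F(x^0)$, which holds because $\phi^0(x)=F(x^0)+\frac{\gamma^0}{2}\|x-z^0\|^2$; for the inductive step I would feed the hypothesis $\bar\phi^k\ge F(x^k)$ into Corollary~\ref{imro:cnvg:cor0} evaluated at the base point $y^k$ and the test point $x=x^k$ (which is legitimate since IMRO-1D satisfies $H\succeq A^tA$, hence $F(x_H^\ast(y^k))\le M_H^k(x_H^\ast(y^k))$), substitute the resulting lower bound into the closed-form recursion for $\bar\phi^{k+1}$, and then use the two defining identities of the algorithm, $\sigma(\alpha^k)^2=(1-\alpha^k)\gamma^k=\gamma^{k+1}$ and $y^k=\alpha^k z^k+(1-\alpha^k)x^k$, to see that the coefficient of $\|g_H^k\|^2$ and the linear term in $g_H^k$ both vanish, leaving $\bar\phi^{k+1}\ge F(x_H^\ast(y^k))=F(x^{k+1})$.

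With the hypothesis secured, Theorem~\ref{fimro:thm1} gives immediately
$$F(x^k)-F(\xs)\le\lambda^k\left[F(x^0)+\frac{\gamma^0}{2}\|x^0-\xs\|^2-F(\xs)\right].$$
Next I would bound the bracket by writing it as $\bigl(F(x^0)-F(\xs)\bigr)+\frac{\gamma^0}{2}\|x^0-\xs\|^2$ and applying $F(x^0)-F(\xs)\le\frac{L}{2}\|x^0-\xs\|^2$, so the bracket is at most $\frac{\gamma^0+L}{2}\|x^0-\xs\|^2$. Substituting the estimate $\lambda^k\le\frac{4\sigma}{(2\sqrt{\sigma}+k\sqrt{\gamma^0})^2}$ of Lemma~\ref{fimro:Bolam} (applicable since $\gamma^0\ge L$) then yields the first displayed inequality of the corollary.

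For the specialized bound I would set $\sigma=\gamma^0=L$, which is admissible because in IMRO-1D $\sigma=\|A\|^2=L$ by \eqref{imro1dsig} and $\gamma^0$ is ours to choose. Then $\frac{\gamma^0+L}{2}=L$ and $\frac{4\sigma}{(2\sqrt{\sigma}+k\sqrt{\gamma^0})^2}=\frac{4L}{L(2+k)^2}=\frac{4}{(2+k)^2}$, giving $F(x^k)-F(\xs)\le\frac{4L}{(2+k)^2}\|x^0-\xs\|^2$. The only genuinely substantive step is the first one --- confirming that Algorithm~\ref{fimro:alg}'s choices of $\alpha^k$ and $y^k$ really do propagate $F(x^k)\le\bar\phi^k$, i.e.\ that they match the conditions \eqref{fimro:gmul0eq1} and \eqref{fimro:linterm0eq1} derived just before the algorithm; everything after that is a routine chain of Theorem~\ref{fimro:thm1}, the composite-function descent bound, and Lemma~\ref{fimro:Bolam}.
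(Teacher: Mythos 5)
Your proposal is correct and follows essentially the same route as the paper: the corollary is obtained by chaining Theorem~\ref{fimro:thm1}, Lemma~\ref{fimro:Bolam}, and the bound $F(x^0)-F(\xs)\le\frac{L}{2}\|x^0-\xs\|^2$, with the hypothesis $F(x^k)\le\bar\phi^k$ guaranteed by the construction of $\alpha^k$ and $y^k$ via \eqref{fimro:gmul0eq1} and \eqref{fimro:linterm0eq1}, exactly as the paper argues just before Algorithm~\ref{fimro:alg}. Your only addition is to spell out that inductive verification explicitly, which the paper leaves implicit in the preceding derivation.
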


This bound of $O(1/k^2)$ residual after $k$ iterations is the best known for
the class of first-order methods for BPDN.

\section{Numerical Result}
\label{ch:imroNR}
We compare IMRO in terms of both speed and accuracy with other available solvers listed below.

\begin{itemize}
\item {\bf GPSR (gradient projection for Sparse Reconstruction)\cite{GPSRp}}
This gradient projection based algorithm first reformulates BPDN problem \eqref{BPDN} into a bound constrained quadratic program (BCQP).
The BCQP formulation is then solved through a projected gradient technique. In other words
$$z^{k+1} = z^k + \lambda^k \l(\text{Proj}_+\l(z^k-\alpha^k\nabla F(z^k)\r)-z^k\r)$$
where $\lambda^k$ and $\alpha^k$ are step sizes and $\proj_+$ is the projection on nonnegative orthant.  Two variants of the algorithm have been proposed. In the basic variant $\lambda^k=1$ and $\alpha^k$ is determined through a backtracking line search. The BB version finds $\lambda^k\in \l[0,1\r]$ using the technique due to Barzilai and Borwein \cite{BBstepsize}, then updates $\alpha^{k+1}$ accordingly. GPSR software is available at \cite{ GPSRs}. In our experiment we have used the BB version.
\item{\bf l1-ls \cite{l1lsp}}
l1-ls solves the same reformulation of BPDN problem as in GPSR through a truncated Newton interior point method. 
In l1-ls, preconditioned conjugate gradient (PCG) has been adopted for finding the search direction. Although forming the Newton system explicitly requires $A^tA$, the computational cost of each iteration of PCG is reduced to a matrix vector multiplication by the proper choice of preconditioner. The MATLAB code of this solver is available at \cite{l1lsS}.

\item {\bf FPC and FPC-AS \cite{FPCp1,FPCp2}}
Recall that first order optimality conditions imply that $\xs$ is the minimizer of a composite function if and only  if 
\beq \label{imroNR:FPeq}
\xs=\prox_{\alpha p}\l(\xs - \alpha \nabla f(\xs)\r).
\eeq

Equation \eqref{imroNR:FPeq} is called ``fixed point equation". ``Fixed Point Continuation" (FPC) aims to solve equation \eqref{imroNR:FPeq} through a proximal gradient method. The resulting algorithm has the following general scheme
\beq
x^{k+1} = \prox_{\alpha p} \l( x^k - \alpha \nabla f(x^k)\r).
\eeq
The developed theory on the convergence of this method suggests that the algorithm converges faster for larger values of $\lambda$. ``Continuation" strategy is essentially solving BPDN problem for a decreasing values of $\bar \lambda \rightarrow \lambda$ and warm starting the algorithm from the terminating solution corresponding to the previous value of $\bar \lambda$. 
FPC has later been extended to FPC-AS \cite{FPCp-as}. For each continuation interval, FPC-AS first solves the problem through FPC, then hard-thresholds the solution for nonzero entries. The $\|x\|_1$ is replaced  with $\sgn(x)^tx$, and the smaller smooth problem is minimized to attain the final solution. FPC and FPC\_AS  software are available at \cite{FPCs,FPC-ASs}. We have used FPC-AS in our experiment.

\item{\bf SpaRSA (Sparse Reconstruction by Separable Approximation) \cite{SpaRSAp}}
SpaRSA is a proximal gradient  framework for composite functions in which the nonsmooth part, $p(x)$, is separable. When $p(x)=\|x\|_1$ as is BPDN problem, SpaRSA reduces to an ISTA algorithm. The  Barzilai and Borwein \cite{BBstepsize} and a continuation scheme has been applied to enhance the performance of the algorithm. The MATLAB code is available at \cite{SpaRSAs}.

\item {\bf SPGL1 \cite{VFried:PP}}This method solves BP$_\eps$ formulation, \eqref{BP}, through solving a sequence of LASSO problems, \eqref{LASSO}. Each LASSO problem is solved using a spectral projected gradient method \cite{InexSpecProjGrad2}. In this technique, a single parameter function for LASSO problem is defined as $\phi(\tau)= \|Ax^\ast_\tau -b \|$. Using the dual information of the LASSO problem, one may recover derivative of $\phi$; hence Newton method is applied to find the root of $\phi(\tau)=\eps$, where $\eps$ is the parameter of BP$_\eps$ problem. At termination the solution of LASSO$_{\tau^\ast}$ coincidences with the solution of BP$_\eps$. 
The MATLAB package for SPGL1 is available at \cite{SPGL1s}.

\item {\bf TwIST\cite{TwISTp1}} Recall that ISTA algorithm as presented in section \ref{intro:SR} had the general form of $\shrk_{\lambda \alpha} \l(x^k- \alpha \nabla f(x^k)\r)$ or $(1-\beta)x^k + \beta \shrk_{\lambda \alpha} \l(x^k- \alpha \nabla f(x^k)\r)$ for $\beta=1$. TwIST is a two step ISTA technique in which each iterate depends on the last two previous iterates. The general form of TwIST is $(1-\alpha)x^{k-1} + (\alpha-\beta)x^k + \beta \shrk_{\lambda \alpha} \l(x^k- \alpha \nabla f(x^k)\r)$. Note that $\alpha$ and $\beta$ are not necessarily smaller than 1. For our experiment we have used the default setting on these parameters. The MATLAB package of this algorithm is available at \cite{TwISTs}.

\item {\bf FISTA\cite{BT:fista}} We presented FISTA in section \ref{intro:SR}. The algorithm has proposed by Beck and Teboulle in \cite{BT:fista} and by Nesterov in \cite{nesterov1983}. The algorithm is part of the TFOCS package \cite{TFOCSp} that is available at \cite{TFOCSs}.

\item {\bf ZeroSR1 \cite{BeckerPNM}} We presented this method and the similarities of this technique to IMRO earlier. As discussed, the Hessian approximation in this proximal quasi-Newton method is of the form ``diagonal plus a rank one matrix". This package can be used for minimizing composite functions in which $p(x)$ is separable; thus it is a suitable technique for BPDN problem. The software is in MATLAB and is available at \cite{ZeroSR1s}. The work per iteration of ZeroSR1 is almost identical to ours in the case of IMRO-1D. For IMRO-2D, one extra $A^t$ call is required. For this reason, our experiments compare the number of multiplications by $A/A^t$ rather than the number of iterations.
 
\item {\bf PNOPT \cite{ProxNMcompfun}} PNOPT is a MATLAB implementation of the proximal Newton-type methods for minimizing composite functions. The Hessian approximation in the proximal quasi-Newton methods are based on BFGS and L-BFGS. In our experiment we have used L-BFGS as it performed considerably better than BFGS on our test cases. This package is available at \cite{PNOPTs}.
\end{itemize}

In both variants of IMRO, we took $x^{k+1}$ as the minimizer of the approximation model, i.e., no line search has been employed.
One of the key questions regarding the implementation of IMRO-1D is what direction $v$ should be used. In our experiment, we have used the previous direction for $v^k$, i.e. $v^k=x^k-x^{k-1}$. The choice of $v^k$, however, needs further study. Our measurement on the computational cost of each algorithm is the number of matrix-vector multiplications, i.e., the number of calls to $A$ or $A^t$. We plot the residual of the solution (i.e., $\|x-\xs\|$) with respect to $A$/$A^t$ calls, where $\xs$ is obtained from our test generator.

Our test cases come from L1TestPack \cite{l1testpack} and Sparco \cite{sparco}. We first compare IMRO with available software for solving BPDN formulation on L1TestPack \cite{l1testpack} generated instance. We, then, test IMRO and couple of other state-of-the-art solvers in sparse signal recovery on a number of examples from Sparco \cite{sparco}. Table \ref{imroNR:infoNonOrth} summarizes the information on the test cases used for our computational experiment. For the L1TestPack generated instances, ``Ent.\ type" stands for the type of entries in matrix $A$ or vector $\xs$, the optimal solution of the problem. ``dynamic 3" gives random entries with a dynamic range of $10^3$. It is often believed that instances with higher dynamic range are harder to solve. According to our experiment, $\text{cond}(A)$ is another key factor that influences the performance of some of the solvers. 

Figures \ref{imro:accgraph} and \ref{fig:errsparco} visualize the residual of the solution with respect to the number of $A$ or $A^t$ calls for the proximal gradient methods. While FIMRO performs slightly better than IMRO-1D, our experiment suggests that IMRO-2D outperforms both FIMRO and IMRO-1D, and competes favorably with other solvers. 

\begin{table}[h]
\caption{Information on Test Cases}
\centering
\begin{tabular}{lcccccc}
\\[-.3cm]
\hline
& & &   cond(A) & Ent. type of $A$ & Ent. type of $x$ &$\lambda$ \\
\hline
\multirow{6}{*}{{ \begin{turn}{90}L1TestPack\quad \quad \end{turn}}}&
\multirow{6}{*}{{ \begin{turn}{90}$A \in \mathbb R^{2500 \times 10000}$ \hspace{0.1in} \end{turn}}}\\
& & Ins 1   & $O(1)$ & Gaussian& Gaussian & 0.5 \\
& &Ins 2  & $O(1)$ & Gaussian& dynamic 3 & 0.5 \\
& &Ins 3 & $O(1)$ & Gaussian& Gaussian & 0.1 \\
& &Ins 4  & $O(1)$ & Gaussian& dynamic 3 & 0.1 \\
& &Ins 5  & $O(10^3)$ & Gaussian& Gaussian & 0.1 \\
& &Ins 6  & $O(10^3)$ & Gaussian& dynamic 3 & 0.1 \\
\hline
\hline
& & ID &  m & n & \multicolumn{1}{c}{ Operator} & $\lambda$ \\
\hline
\multicolumn{2}{c}{\multirow{4}{*}{{\begin{turn}{90} \large Sparco\hspace{0.15 in} \end{turn}}}}\\
& &5& 300 & 2048 & \multicolumn{1}{c}{Gaussian,DCT}&0.1
\\
& &9& 128 &	128 & \multicolumn{1}{c}{Heaviside}&0.1
\\
& &10&1024&1024 & \multicolumn{1}{c}{Heaviside}&0.1
\\
& & 903& 1024 &1024 &\multicolumn{1}{c}{ 1D Convolution }& 0.1
\\
\hline
\end{tabular}
\label{imroNR:infoNonOrth}
\end{table}


\begin{figure}[t]
\centering
\caption{Accuracy of the Solution for BPDN Solvers on L1TestPack Generated Instance }
\label{imro:accgraph}
\subfigure[Ins 1]{
\includegraphics[width=6cm, height=6cm]{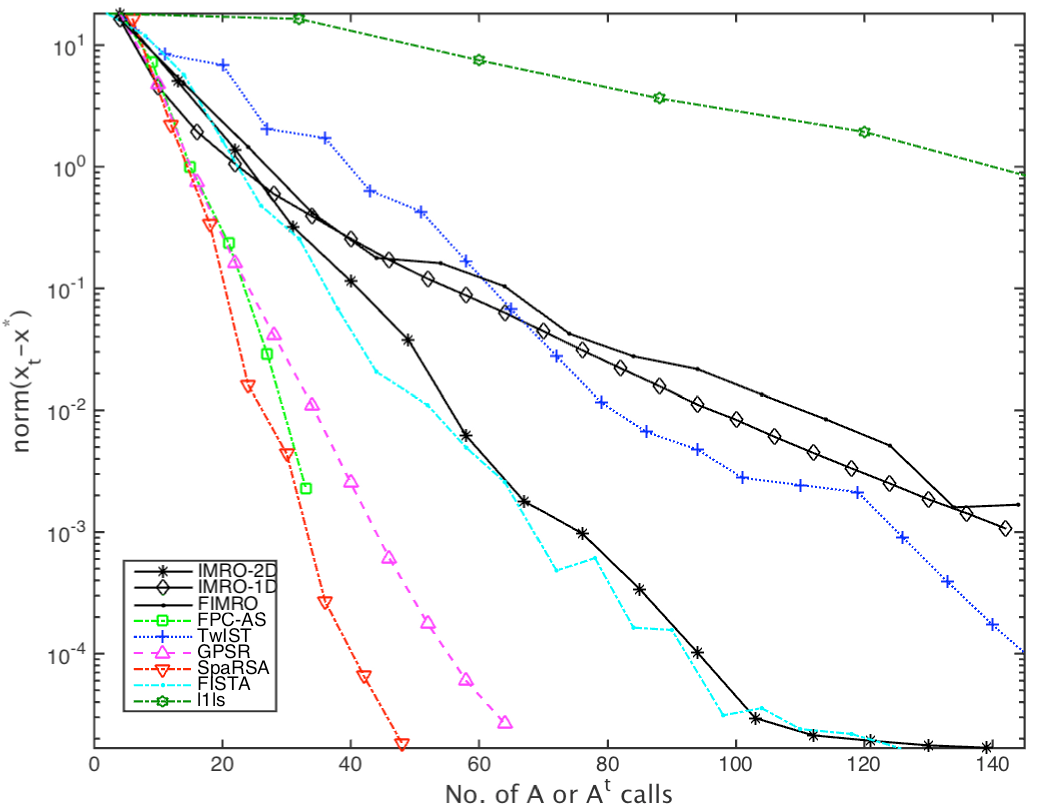}
}
\subfigure[Ins 2]{
\includegraphics[width=6cm, height=6cm]{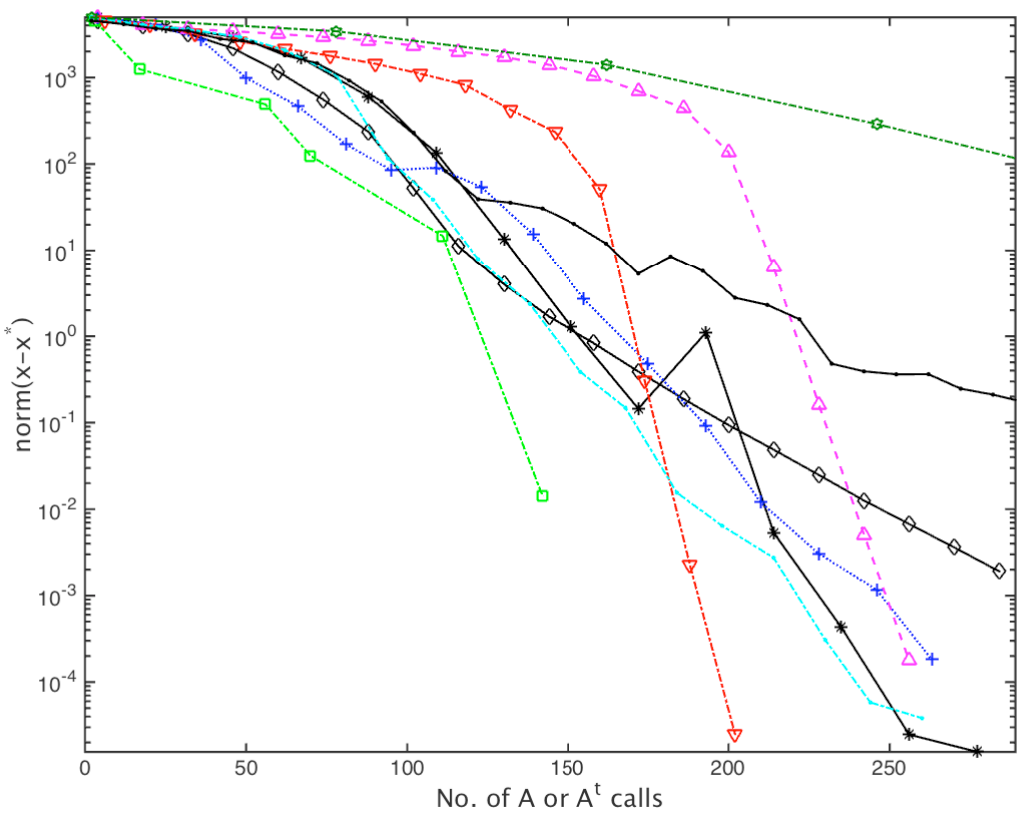}
}

\subfigure[Ins 3]{
\includegraphics[width=6cm, height=6cm]{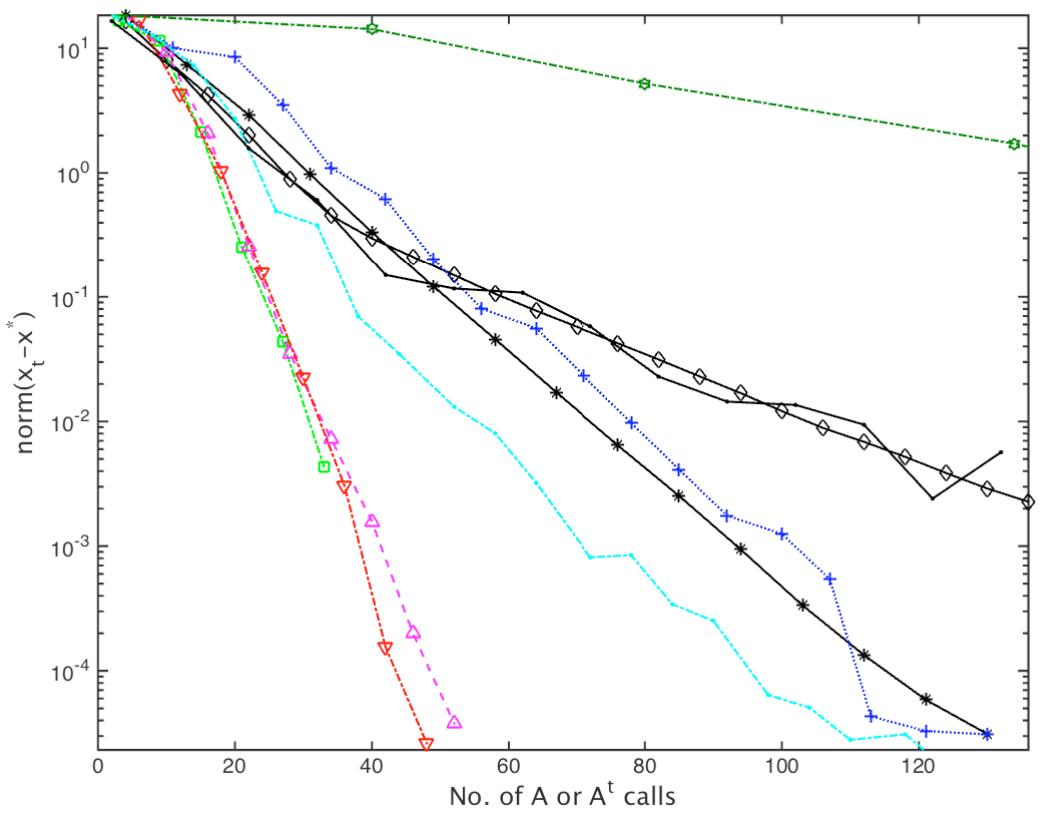}
}
\subfigure[Ins 4]{
\includegraphics[width=6cm, height=6cm]{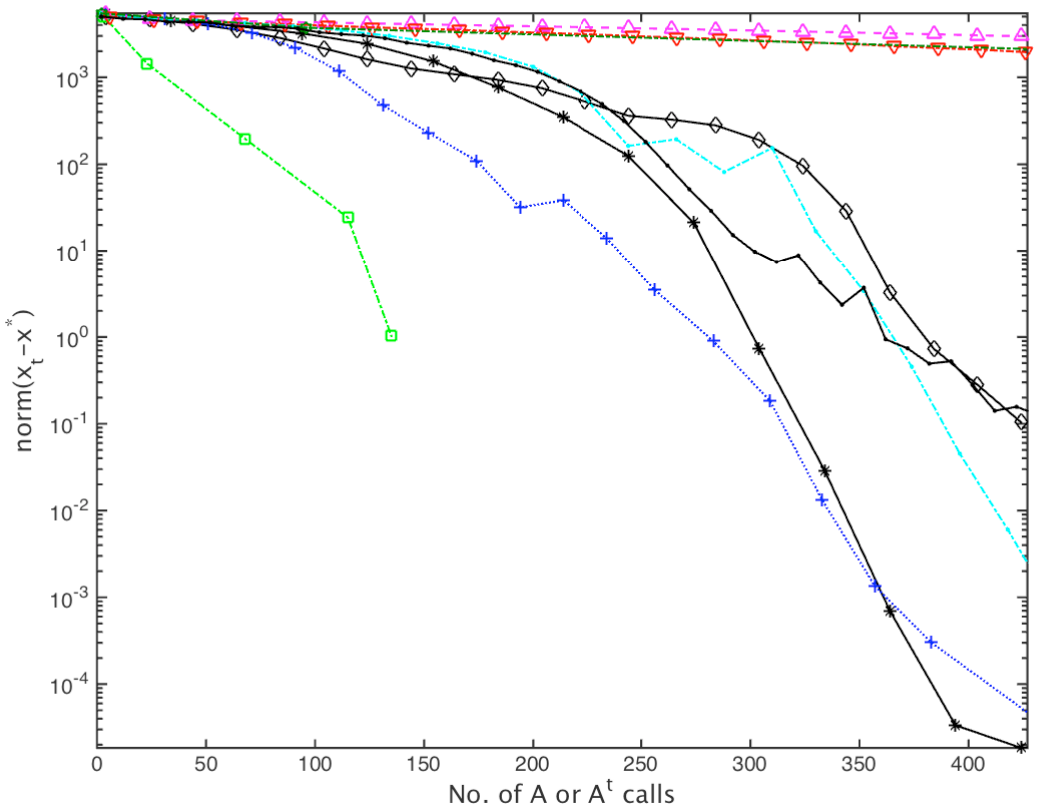}
}
\subfigure[Ins 5]{
\includegraphics[width=6cm, height=6cm]{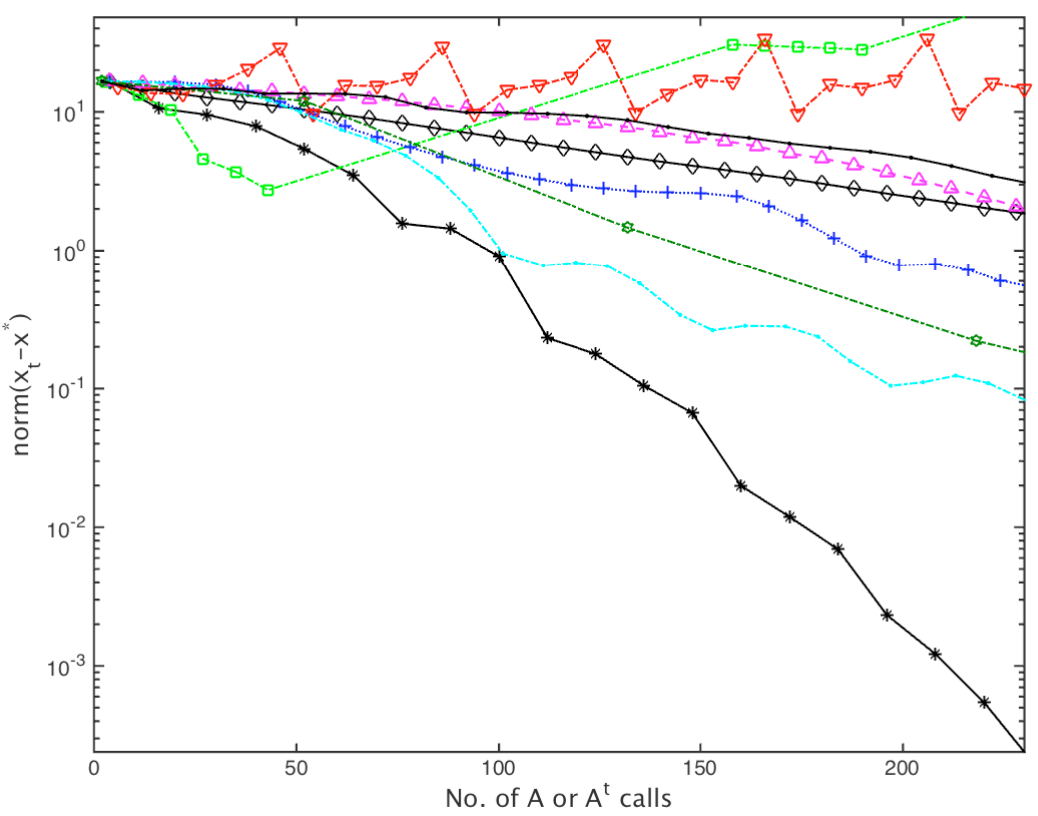}
}
\subfigure[Ins 6]{
\includegraphics[width=6cm, height=6cm]{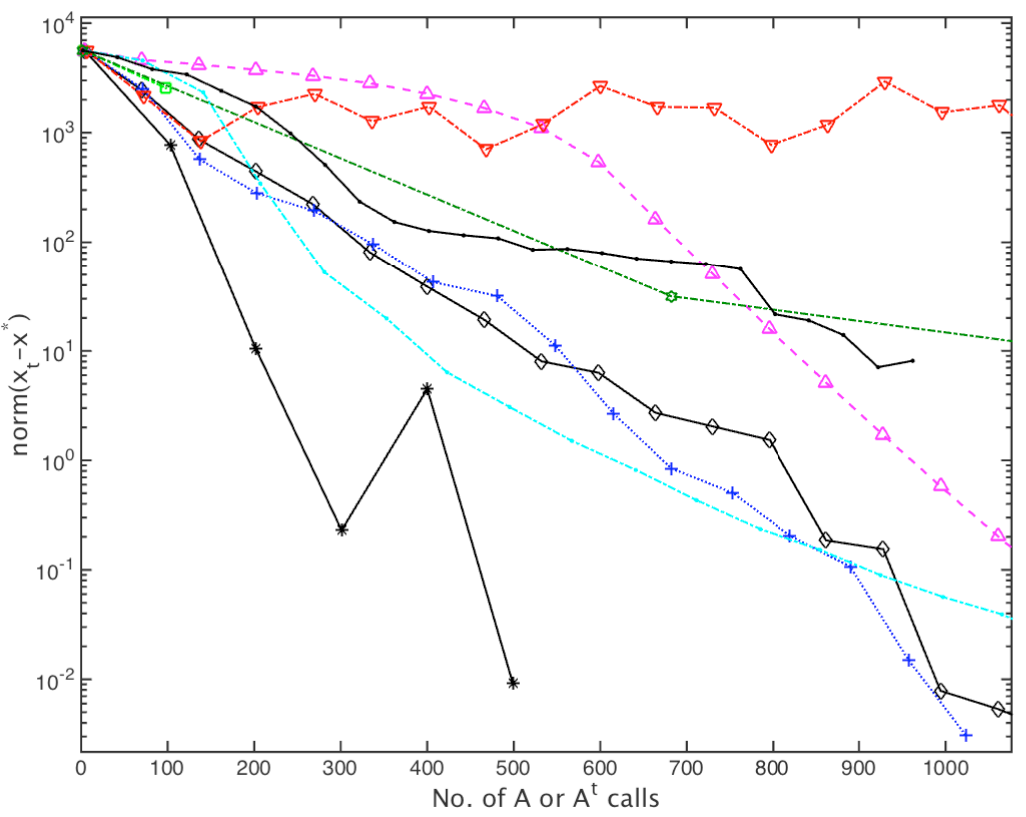}
}
\end{figure}

\begin{figure}[]
\centering
\caption{Accuracy of the Solution for Sparse Recovery Solvers on Sparco Test Cases }
\label{fig:errsparco}
\subfigure[Sparco(5)]{
\includegraphics[width=6cm, height=6cm]{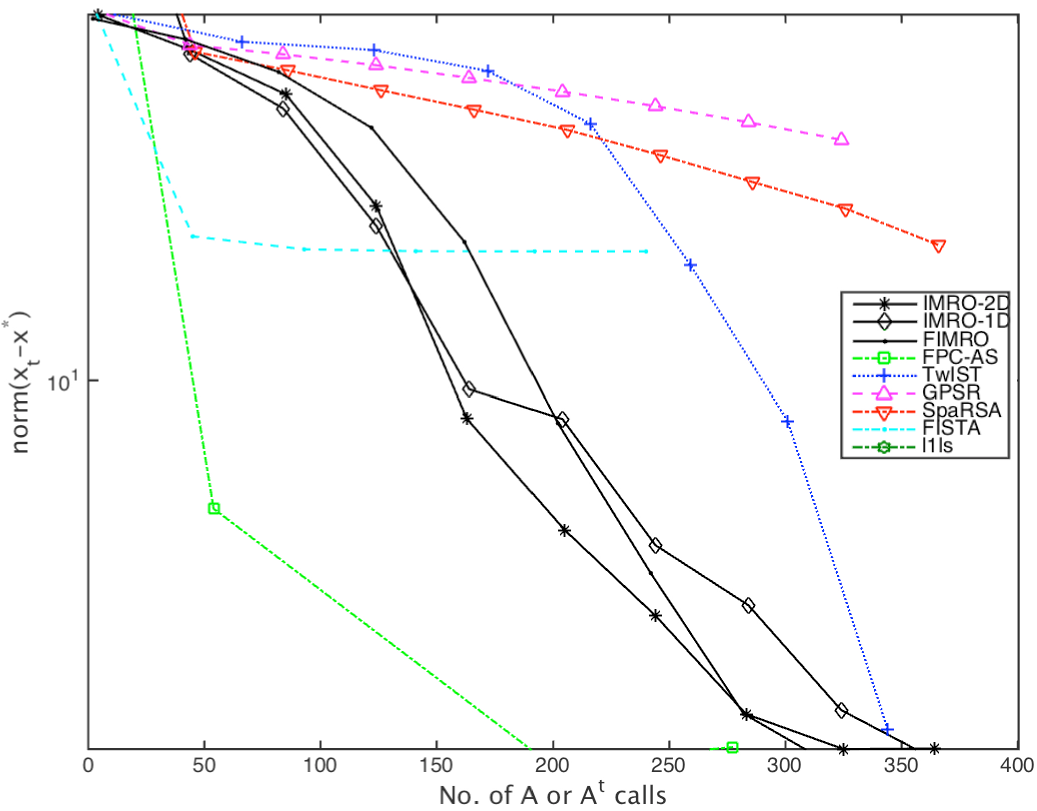}
}
\subfigure[Sparco(9)]{
\includegraphics[width=6cm, height=6cm]{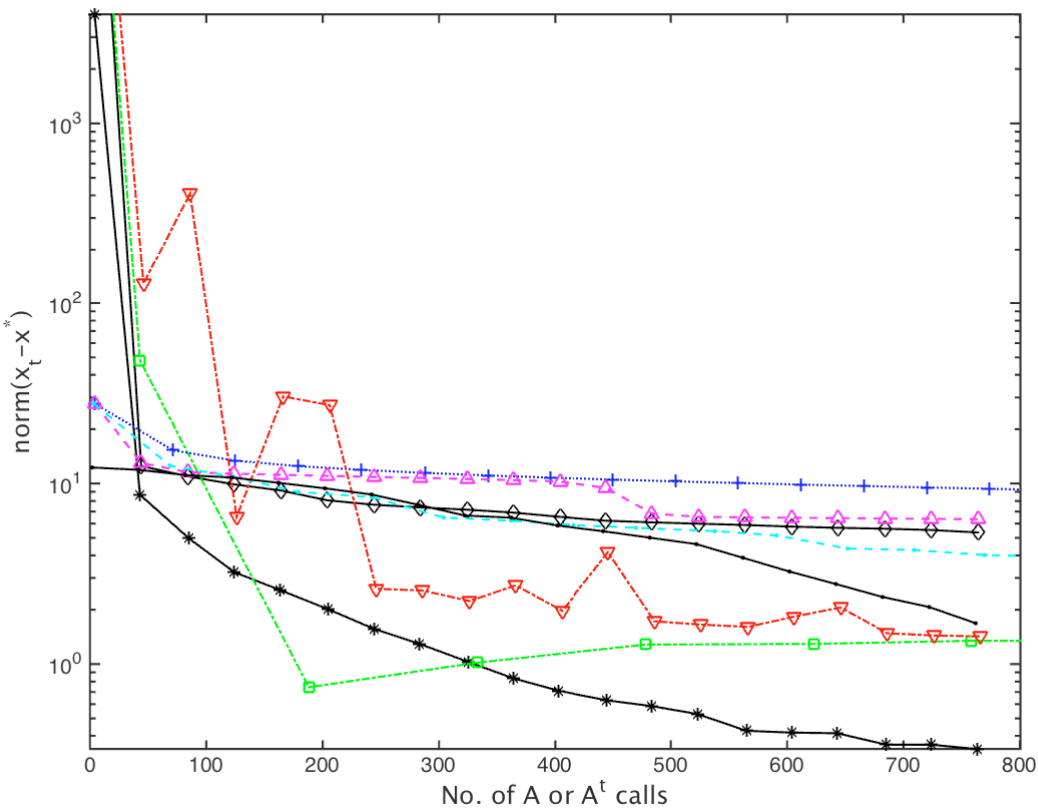}
}

\subfigure[Sparco(10)]{
\includegraphics[width=6cm, height=6cm]{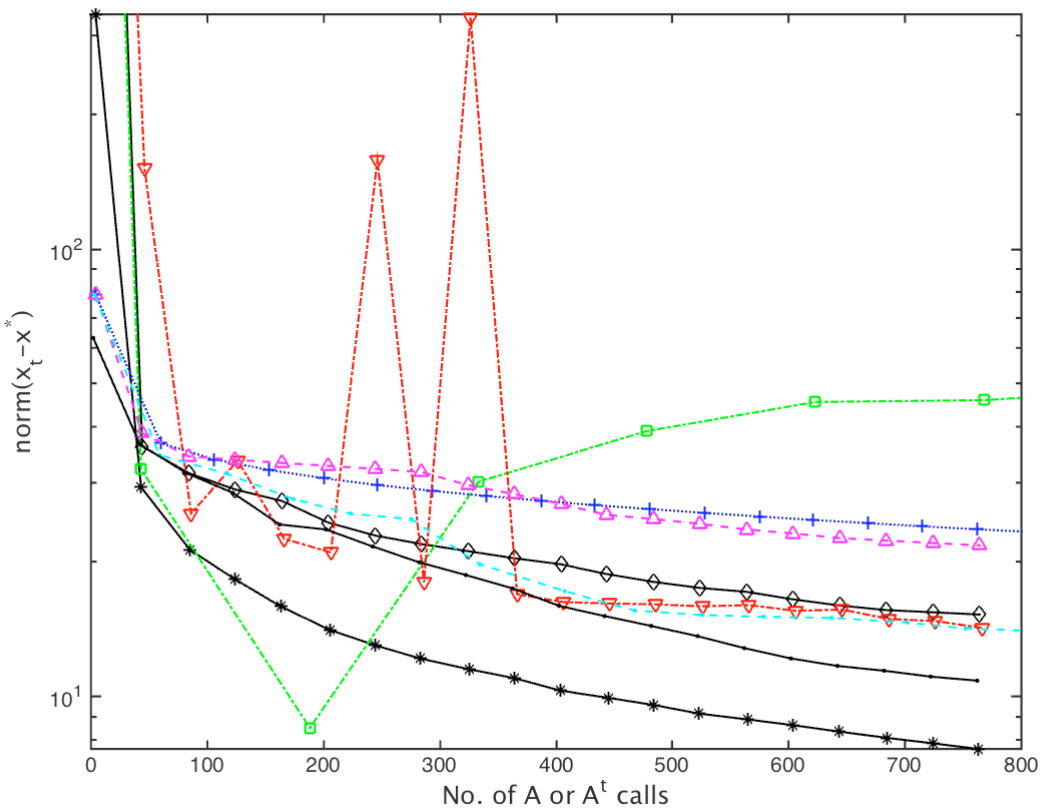}
}
\subfigure[Sparco(903)]{
\includegraphics[width=6cm, height=6cm]{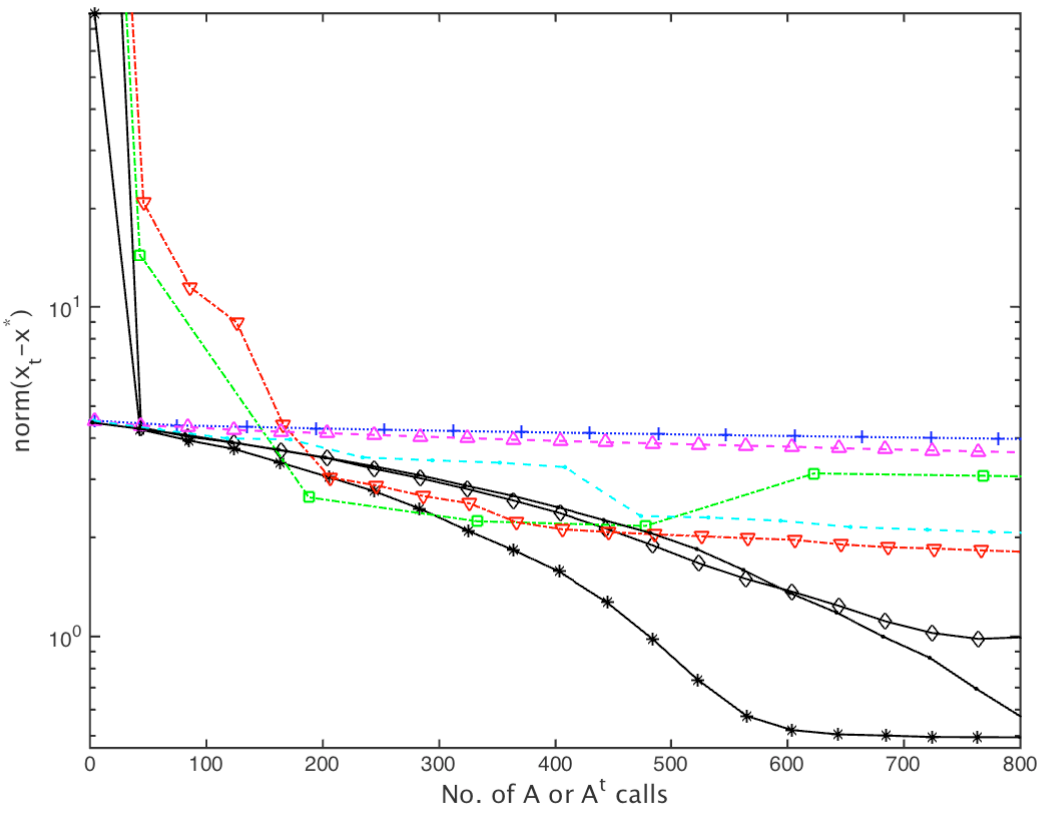}
}
\end{figure}

In figures \ref{imro:PQNgraph1} and \ref{imro:PQNsparco}, we compare IMRO-2D with other proximal quasi-Newton techniques, i.e., ZeroSR1 \cite{ZeroSR1s}  and PNOPT \cite{PNOPTs} on our test cases. The default setting for the number of stored previous iterates in the L-BFGS of PNOPT is 50, which is substantially larger than what is typically used for L-BFGS \cite{nocedalwrightbook}.  We hypothesize that storage of 50 vectors is impractical for ``big-data'' applications in which each iterate has millions of entries.  In addition, it is not clear how to compare the PNOPT running time with 50 stored vectors against other codes because, unlike the other codes, the segments of the PNOPT code that compute inner products and saxpys require a substantial amount of CPU time. In addition, the performance of the solver was not improved for 50 stored vectors on our test cases. For these reasons, we tested PNOPT with the memory set to 3 instead of 50. 
\begin{figure}[t]
\centering
\caption{Accuracy of the Solution for the Proximal Quasi-Newton Techniques on L1TestPack Generated Instances}
\label{imro:PQNgraph1}
\subfigure[Ins 1]{
\includegraphics[width=6cm, height=6cm]{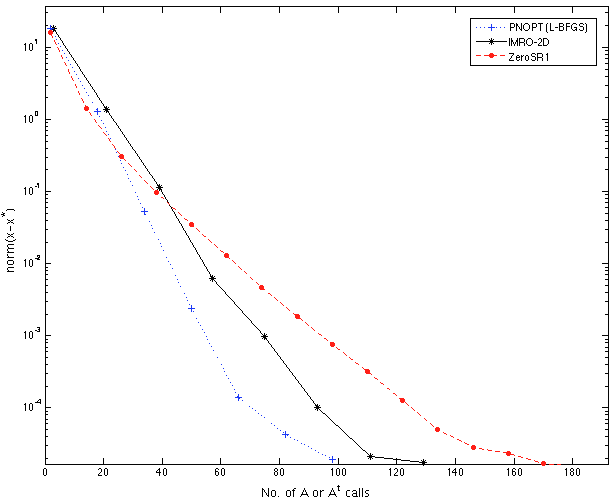}
}
\subfigure[Ins 2]{
\includegraphics[width=6cm, height=6cm]{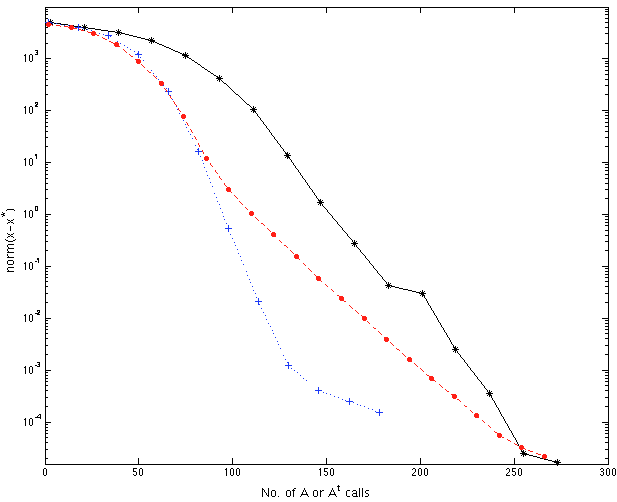}
}

\subfigure[Ins 3]{
\includegraphics[width=6cm, height=6cm]{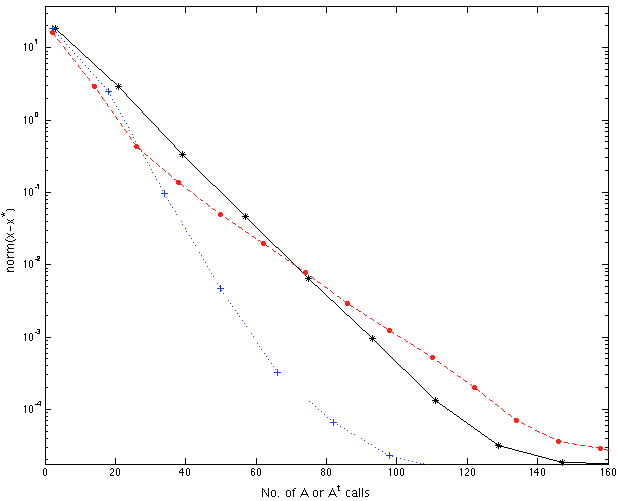}
}
\subfigure[Ins 4]{
\includegraphics[width=6cm, height=6cm]{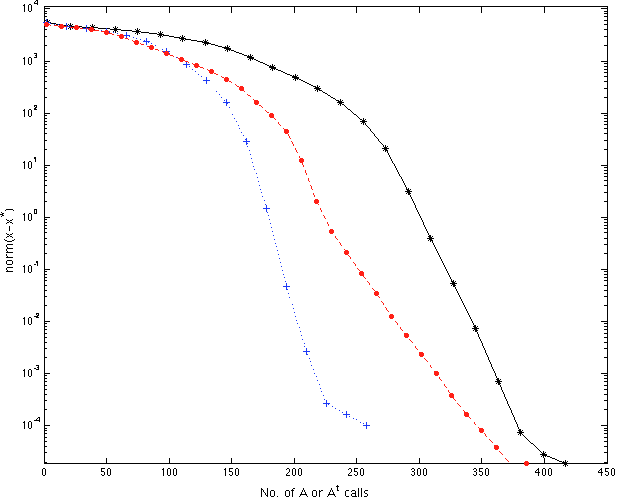}
}
\subfigure[Ins 5]{
\includegraphics[width=6cm, height=6cm]{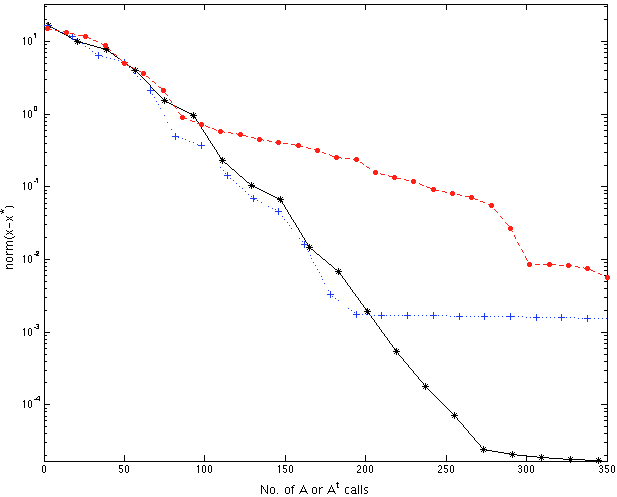}
}
\subfigure[Ins 6]{
\includegraphics[width=6cm, height=6cm]{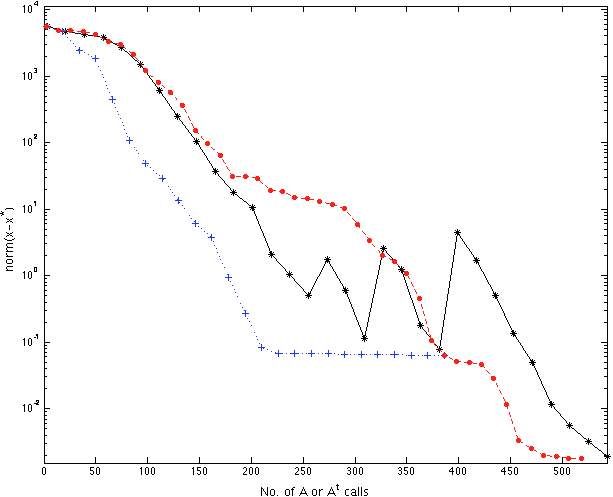}
}
\end{figure}

\begin{figure}[]
\centering
\caption{Accuracy of the Solution for Proximal Quasi-Newton Techniques on Sparco Test Cases }
\label{imro:PQNsparco}
\subfigure[Sparco(5)]{
\includegraphics[width=6cm, height=6cm]{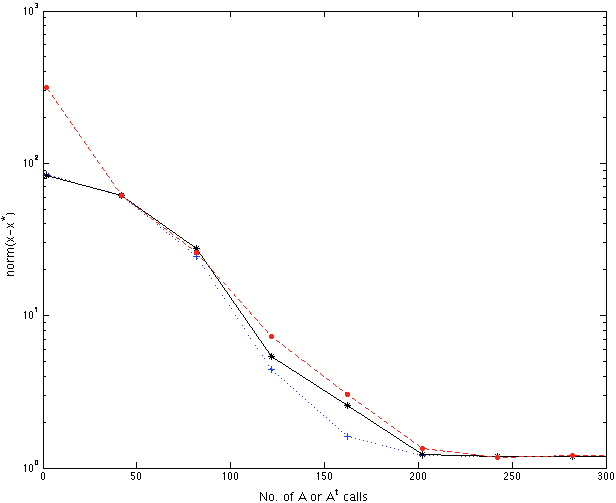}
}
\subfigure[Sparco(9)]{
\includegraphics[width=6cm, height=6cm]{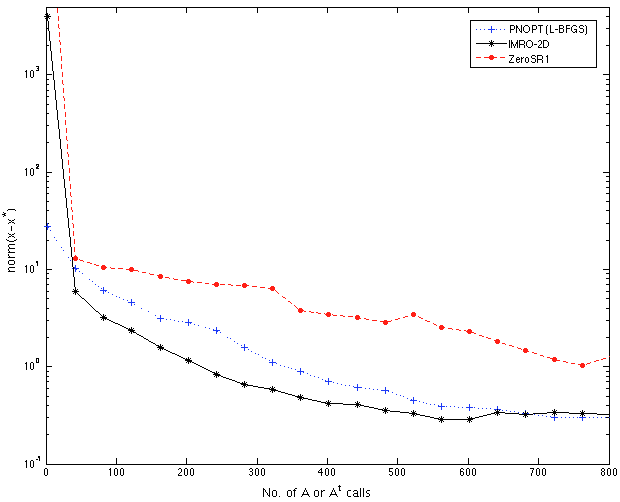}
}

\subfigure[Sparco(10)]{
\includegraphics[width=6cm, height=6cm]{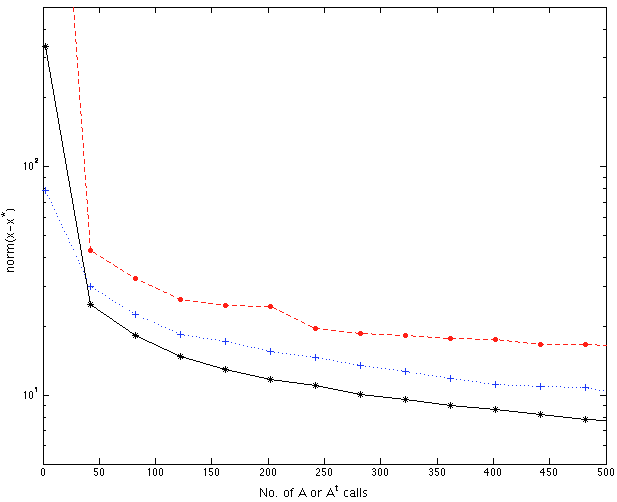}
}
\subfigure[Sparco(903)]{
\includegraphics[width=6cm, height=6cm]{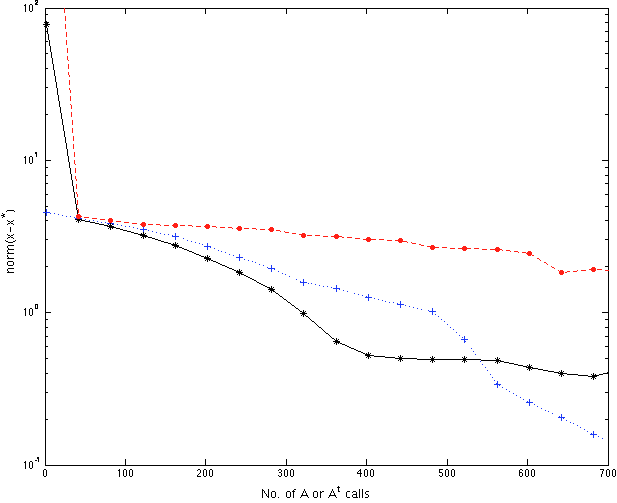}
}
\end{figure}

\section{Conclusion}\label{sec:conclusion}
We presented a proximal quasi-Newton method for solving the $\ell_1$-regularized least square problem. The approximation Hessian matrix in the suggested scheme has the format of identity minus rank one (IMRO) which allows us to compute the proximal point effectively. Two variants of this technique are proposed; in IMRO-1D the approximation model matches the function on a one-dimensional space, and in IMRO-2D it matches the function on a two-dimensional space. Our computational experiments have shown promising results. IMRO-2D, in particular, outperformed other state-of-the-art solvers in many of our test cases. Although this paper focuses on solving BPDN formulation, there are generalizations possible. As noted by Becker and Fadili \cite{BeckerPNM}, the computation of the proximal point is generalized to other regularizers besides $\|x\|_1$. For the smooth part, we can approximate the function rather than matching it exactly on a specific subspace. For the quadratic approximation to the smooth part the gradient and the Lipschitz constant of the gradient is needed for IMRO-1D, or the local curvature of the function in two directions for IMRO-2D, which can be obtained efficiently via automatic differentiation \cite{mythesis}. These extensions of our method could be the subject of future work. 

An accelerated variant of IMRO, named FIMRO, was also proposed. Despite theoretical advantages of FIMRO, we did not observe significant practical improvement in our experiment. The possible directions (one dimensional space) in IMRO-1D needs further study. Other possible directions to pursue are to adapt a suitable line search and a continuation scheme  for IMRO. Although in theory the convergence rate of IMRO does not depend on the regularization parameter, a continuation scheme may enhance the performance of IMRO in practice.

\clearpage

\bibliographystyle{plainurl}
\bibliography{RefIMROp}

\end{document}